\def\N{{\mathbb{N}}}
\def\E{{\mathbb{E}}}
\newtheorem{theorem}{Theorem}
\newtheorem{lemma}{Lemma}
\newtheorem{open question}{Open Question}
\newtheorem{corollary}{Corollary}
\theoremstyle{definition}
\journal{ArXiv}
\begin{document}

\begin{frontmatter}



\title{Convex pentagons that admit $i$-block transitive tilings}


\author[1]{Casey Mann}
\author[2]{Jennifer McLoud-Mann}
\author[3]{David Von Derau}

\address[1]{University of Washington Bothell}
\address[2]{University of Washington Bothell}
\address[3]{University of Washington Bothell}
\begin{abstract}

The problem of classifying the convex pentagons that admit tilings of the plane is a long-standing unsolved problem. Previous to this article, there were 14 known distinct kinds of convex pentagons that admit tilings of the plane. Five of these types admit tile-transitive tilings (i.e. there is a single transitivity class with respect to the symmetry group of the tiling). The remaining 9 types do not admit tile-transitive tilings, but do admit either 2-block transitive tilings or 3-block transitive tilings; these are tilings comprised of clusters of 2 or 3 pentagons such that these clusters form tile-2-transitive or tile-3-transitive tilings. In this article, we present some combinatorial results concerning pentagons that admit $i$-block transitive tilings for $i \in \mathbb{N}$. These results form the basis for an automated approach to finding all pentagons that admit $i$-block transitive tilings for each $i \in \mathbb{N}$. We will present the methods of this algorithm and the results of the computer searches so far, which includes a complete classification of all pentagons admitting 1-, 2-, and 3-block transitive tilings, among which is a new 15th type of convex pentagon that admits a tile-3-transitive tiling.

\end{abstract}

\begin{keyword}


tiling \sep pentagon

\end{keyword}

\end{frontmatter}


\section{Preliminaries}

A \emph{plane tiling $\mathscr{T}$} is a countable family of closed topological disks $\mathscr{T}=\lbrace T_1,T_2,...\rbrace$ that cover the Euclidean plane $\E^2$ without gaps or overlaps; that is, $\mathscr{T}$ satisfies
\begin{enumerate}
\item $\displaystyle \bigcup_{i \in \N} T_i = \E^2$, and
\item $\text{int}(T_i) \cap \text{int}(T_j) = \emptyset$ when $i \neq j$.
\end{enumerate}
\noindent The $T_i$ are called the \emph{tiles} of $\mathscr{T}$.  If the tiles of $\mathscr{T}$ are all congruent to a single tile $T$, then $\mathscr{T}$ is \emph{monohedral} with \emph{prototile} $T$ and we say that the prototile $T$ \emph{admits} the tiling $\mathscr{T}$. The intersection of any two distinct tiles of $\mathscr{T}$ can be a set of isolated arcs and points. These arcs are called the \emph{edges} of $\mathscr{T}$, and the isolated points, along with the endpoints of the edges, are called the \emph{vertices} of $\mathscr{T}$. In this paper, only tilings whose tiles are convex polygons are considered. To distinguish between features of the tiling and features of the polygons comprising the tiling, the straight segments forming the boundary of a polygon will be called its \emph{sides} and the endpoints of these straight segments will be called its \emph{corners}. If the corners and sides of the polygons in a tiling coincide with the vertices and edges of the tiling, then the tiling is said to be \emph{edge-to-edge}.

A \emph{symmetry} of $\mathscr{T}$ is an isometry of $\E^2$ that maps the tiles of $\mathscr{T}$ onto tiles of $\mathscr{T}$, and the \emph{symmetry group} $S(\mathscr{T})$ of $\mathscr{T}$ is the collection of such symmetries. If $S(\mathscr{T})$ contains two nonparallel translations, $\mathscr{T}$ is \emph{periodic}. Two tiles $T_1,T_2 \in \mathscr{T}$ are said to be \emph{equivalent} if there is an isometry $\sigma \in \mathscr{T}$ such that $\sigma(T_1) = T_2$. If all tiles of $\mathscr{T}$ are equivalent, $\mathscr{T}$ is said to be \emph{tile-transitive} (or \emph{isohedral}). Similarly, if there are exactly $k$ distinct transitivity classes of tiles of $\mathscr{T}$ with respect to $S(\mathscr{T})$, then $\mathscr{T}$ is \emph{tile-$k$-transitive}. The tile-transitive tilings of the plane have been classified \cite{GS2}, and this classification will be central to the methodology presented in this article.

The tiles of $\mathscr{T}$ are \emph{uniformly bounded} if there exist parameters $u,U>0$ such that every tile of $\mathscr{T}$ contains a disk of radius $u$ and is contained in a disk of radius $U$. A tiling $\mathscr{T}$ is \emph{normal} if three conditions hold:
\begin{enumerate}
\item Each tile of $\mathscr{T}$ is a topological disk,
\item The intersection of any two tiles of $\mathscr{T}$ is a connected set, and
\item The tiles of $\mathscr{T}$ are uniformly bounded.
\end{enumerate}

\noindent The \emph{patch} of $\mathscr{T}$ generated by the disk $D(r,P)$ of radius $r$ centered at point $P$ is the set of tiles $\mathscr{A}(r,P) \subset \mathscr{T}$ that meet $D(r,P)$, along with any additional tiles in $\mathscr{T}$ required to make the union of the tiles in $\mathscr{A}(r,P)$ a closed topological disk. The numbers of tiles, edges, and vertices of $\mathscr{T}$ contained in $\mathscr{A}(r,P)$ will be denoted by $t(r,P)$, $e(r,P)$, and $v(r,P)$, respectively. The following is a fundamental result concerning normal tilings.

\begin{theorem}[Normality Lemma \cite{GS1}] Let $\mathscr{T}$ be a normal tiling. Then for any real number $x > 0$, \[\lim_{r \rightarrow \infty} \frac{t(r+x,P)}{t(r,P)} = 1.\]\end{theorem}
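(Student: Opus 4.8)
The plan is to exploit the uniform boundedness of the tiles and argue entirely by comparison of areas. Write $u,U>0$ for the normality parameters, so that every tile contains a disk of radius $u$ (hence has area at least $\pi u^2$) and is contained in a disk of radius $U$ (hence has diameter at most $2U$ and area at most $\pi U^2$). The two estimates I would extract are a quadratic \emph{lower} bound on $t(r,P)$ and a \emph{linear} upper bound on the number of tiles gained in passing from radius $r$ to radius $r+x$. Since the denominator grows like $r^2$ while the numerator of the discrepancy grows only like $r$, the ratio is squeezed to $1$.

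First I would bound $t(r,P)$ from below. Every tile meeting $D(r,P)$ belongs to $\mathscr{A}(r,P)$, and the tiles of $\mathscr{A}(r,P)$ cover $D(r,P)$, so the total area of the patch is at least $\pi r^2$. As each tile has area at most $\pi U^2$ and the tiles are interior-disjoint, this yields $t(r,P)\geq r^2/U^2$.

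Next, the increment. Every tile that meets $D(r+x,P)$ lies inside $D(r+x+2U,P)$, since it has diameter at most $2U$. Conversely, any tile lying entirely inside $D(r-2U,P)$ meets $D(r,P)$ and so already belongs to $\mathscr{A}(r,P)$; hence a tile contributing to $t(r+x,P)-t(r,P)$ must have a point outside $D(r-2U,P)$ and, again by the diameter bound, lies outside $D(r-4U,P)$. Modulo the filler tiles discussed below, the newly counted tiles are therefore confined to the annulus $D(r+x+2U,P)\setminus D(r-4U,P)$, whose area $\pi\left[(r+x+2U)^2-(r-4U)^2\right]$ is linear in $r$ because the $r^2$ terms cancel. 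As the tiles are interior-disjoint with area at least $\pi u^2$ each, their number is at most $C(x)\,r$ for a constant $C(x)$ and all large $r$.

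Combining the two bounds, and using that $\mathscr{A}(r,P)\subseteq\mathscr{A}(r+x,P)$ so that $t(r+x,P)\geq t(r,P)$,
\[ 1 \;\leq\; \frac{t(r+x,P)}{t(r,P)} \;=\; 1+\frac{t(r+x,P)-t(r,P)}{t(r,P)} \;\leq\; 1+\frac{C(x)\,r}{r^2/U^2}, \]
and the right-hand side tends to $1$ as $r\to\infty$, so the Squeeze Theorem gives the claim. The step I expect to be the genuine obstacle is making the annulus confinement rigorous for the \emph{extra} tiles adjoined to $\mathscr{A}(r,P)$ to render it a closed topological disk: these filler tiles need not meet $D(r,P)$, so to keep them inside the same annular region I must invoke the full force of normality—uniform boundedness together with the connectedness of pairwise tile intersections—to preclude such filler appearing far from $D(r,P)$. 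Once this confinement is established, the linear count above applies to all of $t(r+x,P)-t(r,P)$ and the argument closes.
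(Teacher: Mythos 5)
The paper does not prove this statement; it is quoted from Gr\"unbaum and Shephard \cite{GS1} with a citation only, so there is no in-paper proof to compare against. Your area-counting argument is the standard proof of the Normality Lemma and its skeleton is sound: the denominator $t(r,P)$ grows at least quadratically because the patch covers $D(r,P)$ and each tile has area at most $\pi U^2$, while the increment $t(r+x,P)-t(r,P)$ is confined to an annulus of area linear in $r$ and each tile has area at least $\pi u^2$, so the ratio is squeezed to $1$.

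The one step you flag as an obstacle --- confining the filler tiles --- does need to be said, but it closes more easily than you suggest, and without invoking the connectedness of tile intersections. For any radius $\rho$, every tile meeting $D(\rho,P)$ is contained in $D(\rho+2U,P)$ by the diameter bound, so the union $X_\rho$ of such tiles lies in $D(\rho+2U,P)$. A filler tile of $\mathscr{A}(\rho,P)$ sits inside a bounded complementary component $H$ of $X_\rho$; since $\partial H\subseteq X_\rho\subseteq D(\rho+2U,P)$ and $H$ is bounded, $H$ itself lies in $D(\rho+2U,P)$ (a point of $H$ outside that disk could be joined by a ray to infinity that must cross $\partial H$ outside the disk, a contradiction). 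Hence \emph{every} tile of $\mathscr{A}(r+x,P)$, filler or not, lies in $D(r+x+2U,P)$. Moreover any tile of $\mathscr{A}(r+x,P)$ that meets $D(r,P)$ already belongs to $\mathscr{A}(r,P)$ by the definition of the patch, so every tile counted by $t(r+x,P)-t(r,P)$ is disjoint from $D(r,P)$ and contained in $D(r+x+2U,P)$, i.e.\ lies in the annulus $D(r+x+2U,P)\setminus D(r,P)$, whose area is linear in $r$. (You should also record, as you implicitly do, that $\mathscr{A}(r,P)\subseteq\mathscr{A}(r+x,P)$; this follows because a filler tile of the smaller patch either meets $D(r+x,P)$ or lies in a bounded complementary component of the larger union.) With that observation inserted, your proof is complete and correct.
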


A normal tiling $\mathscr{T}$ is \emph{balanced} if the following limits exist. \begin{equation} v(\mathscr{T}) = \lim_{r \rightarrow \infty}\frac{v(r,P)}{t(r,P)} \text{ \hspace{.2in} and \hspace{.2in} } e(\mathscr{T}) = \lim_{r \rightarrow \infty}\frac{e(r,P)}{t(r,P)}\label{eqn:balanced}\end{equation} Balanced tilings have the following nice property.

\begin{theorem}[Euler's Theorem for Tilings \cite{GS1}] For any normal tiling $\mathscr{T}$, if either of the limits $v(\mathscr{T})$ or $e(\mathscr{T})$ exists (and is finite), then so does the other. Moreover, if either of the limits $v(\mathscr{T})$ or $e(\mathscr{T})$ exists, $\mathscr{T}$ is balanced and \begin{equation} v(\mathscr{T}) = e(\mathscr{T}) - 1.\label{eqn:Euler}\end{equation}\end{theorem}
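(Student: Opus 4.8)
The plan is to derive an exact Euler relation on each finite patch and then divide by the tile count and pass to the limit. By definition $\mathscr{A}(r,P)$ is a closed topological disk built from whole tiles, edges, and vertices of $\mathscr{T}$, so the union of the edges and vertices of $\mathscr{T}$ lying in $\mathscr{A}(r,P)$ forms a connected planar graph $G_{r,P}$ with $v(r,P)$ vertices and $e(r,P)$ edges. Its faces are exactly the $t(r,P)$ tiles of the patch together with the single unbounded face, so Euler's polyhedral formula $V-E+F=2$ gives
\[ v(r,P)-e(r,P)+t(r,P)=1. \]
Before trusting this I would verify it on a few explicit patches (a single $n$-gon gives $n-n+1=1$, and a patch whose tiles meet non-edge-to-edge, such as a rectangle resting on two unit squares, gives $8-10+3=1$) to be sure that boundary vertices and the subdivision of a single side into several tiling-edges are being counted in exactly the way Euler's formula requires.

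The second step is to divide this identity by $t(r,P)$ and let $r\to\infty$. Uniform boundedness of the tiles forces $t(r,P)\to\infty$: each tile lies in a disk of radius $U$, while the patch contains $D(r,P)$, so $t(r,P)\,\pi U^2\ge \pi r^2$ and hence $t(r,P)\ge r^2/U^2$. Consequently $1/t(r,P)\to 0$, and
\[ \frac{v(r,P)}{t(r,P)}-\frac{e(r,P)}{t(r,P)}=\frac{1}{t(r,P)}-1 \]
shows that the difference of the two ratios tends to $-1$ whether or not either ratio converges on its own.

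To finish, suppose the limit $e(\mathscr{T})$ in \eqref{eqn:balanced} exists and is finite. Writing $v(r,P)/t(r,P)=e(r,P)/t(r,P)-1+1/t(r,P)$ and taking $r\to\infty$ shows that $v(\mathscr{T})$ exists and equals $e(\mathscr{T})-1$; the symmetric rearrangement disposes of the case in which $v(\mathscr{T})$ is the limit assumed to exist. In either case both limits exist, so $\mathscr{T}$ is balanced by definition, and the resulting equality is precisely \eqref{eqn:Euler}.

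The step I expect to be the crux is justifying the per-patch identity rather than the limit passage. One must confirm that $G_{r,P}$ is genuinely connected with exactly one unbounded face --- which is exactly what the definition of a patch as a closed topological disk (with extra tiles adjoined) secures --- and that no degeneracies of $\mathscr{T}$ corrupt the vertex/edge/face bookkeeping; here the normality hypothesis, especially the requirement that any two tiles meet in a connected set, does the work. The Normality Lemma reinforces this picture by guaranteeing that the boundary of the patch is asymptotically negligible relative to its interior, which is the conceptual reason the lone $+1$ and the correction term $1/t(r,P)$ cannot survive division by $t(r,P)\to\infty$.
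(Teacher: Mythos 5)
Your proof is correct; note that the paper itself gives no proof of this theorem, quoting it directly from \cite{GS1}, and your argument --- the exact per-patch identity $v(r,P)-e(r,P)+t(r,P)=1$ from Euler's formula for planar maps, followed by division by $t(r,P)\to\infty$ (forced by the circumradius bound $U$) --- is precisely the standard proof in that reference. The one point worth keeping explicit is the one you already flag: normality guarantees the patch is a finite closed topological disk whose tile boundaries form a connected plane graph with exactly one unbounded face, which is what legitimizes the face count $F=t(r,P)+1$.
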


\subsection{Monohedral Tilings by Convex Pentagons}

This article is concerned with monohedral tilings of the plane in which the prototile is a convex pentagon.  It is known that all triangles and quadrilaterals (convex or not) tile the plane. It is also known that there are exactly 3 classes of convex hexagons that tile the plane \cite{HK}. Figure \ref{fig:hex} shows how the convex hexagons that admit tilings of the plane are classified in terms of relationships among their angles and sides. 

\begin{figure}[H]
\centering
\begin{subfigure}[H]{1\textwidth} 
\centering
\includegraphics[scale=.8]{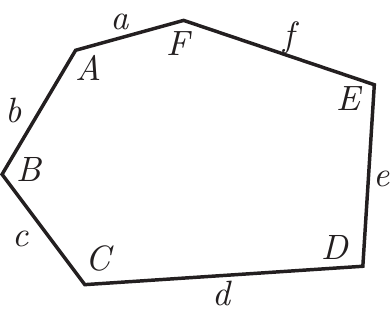}  
\caption{A labeled hexagon}\label{fig:hex-label}
\end{subfigure}
\begin{subfigure}[H]{.3\textwidth}
\centering
\includegraphics[scale=.8]{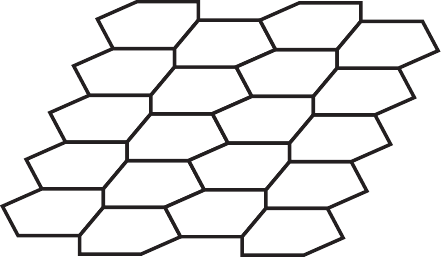}  
\caption{$A+B+C = 2\pi$;\\ $a = d$}
\end{subfigure}
\begin{subfigure}[H]{.3\textwidth}
\centering
\includegraphics[scale=.8]{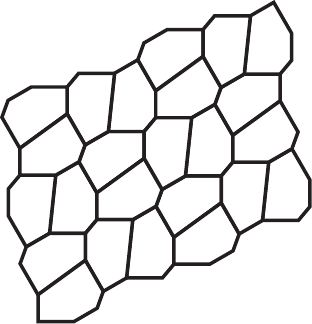}  
\caption{$A+B+D = 2\pi$; \\$a=d$; $c = e$}
\end{subfigure}
\begin{subfigure}[H]{.3\textwidth}
\centering
\includegraphics[scale=.8]{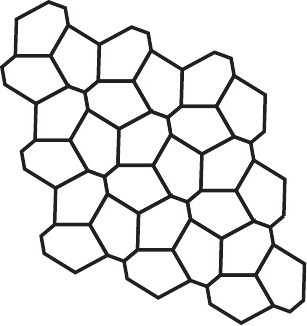} 
\caption{$A = C = E = 2\pi/3$; $a = b$; $c = d$; $e = f$} 
\end{subfigure}
\caption{The three classes of convex hexagons that admit tilings of the plane.}\label{fig:hex}
\end{figure}

It has also been shown that convex $n$-gons with $n \geq 7$ admit no tilings of the plane  \cite{GS1,Niv}. Previous to this article, there were 14 known distinct classes of convex pentagons that tile the plane (Figure \ref{fig:pentagons}). The labeling system for pentagons is the same as that of the hexagons in Figure \ref{fig:hex}. The first 5 types admit tile-transitive tilings of the plane; it was shown by K. Reinhardt that any convex pentagon admitting a tile-transitive tiling of the plane is one of these 5 types. Types 6-8 were discovered by Kershner \cite{Ker}, Type 9 and 11-13 were discovered by M. Rice, and Type 10 by  R. James \cite{Sch}. In \cite{Sch}, D. Schattschneider gives an interesting history (up to 1978) of the problem of classifying convex pentagons that admit tilings of the plane. Since that time, the 14th type of pentagon was discovered in 1985 by R. Stein, and large categories of pentagons have been shown to admit only tilings from among the knows 14 types; this includes equilateral pentagons (\cite{OB1,HH}) and pentagons that admit edge-to-edge tilings \cite{OB2}. In this article we will present a new type of pentagon (Type 15), as well as the results of our exhaustive computer search for convex pentagons that admit $i$-block transtive tilings for $i = 1, 2$, and $3$.

\begin{figure}[H]
\centering
\begin{subfigure}[H]{.3\textwidth} 
\centering
\includegraphics[scale=.15]{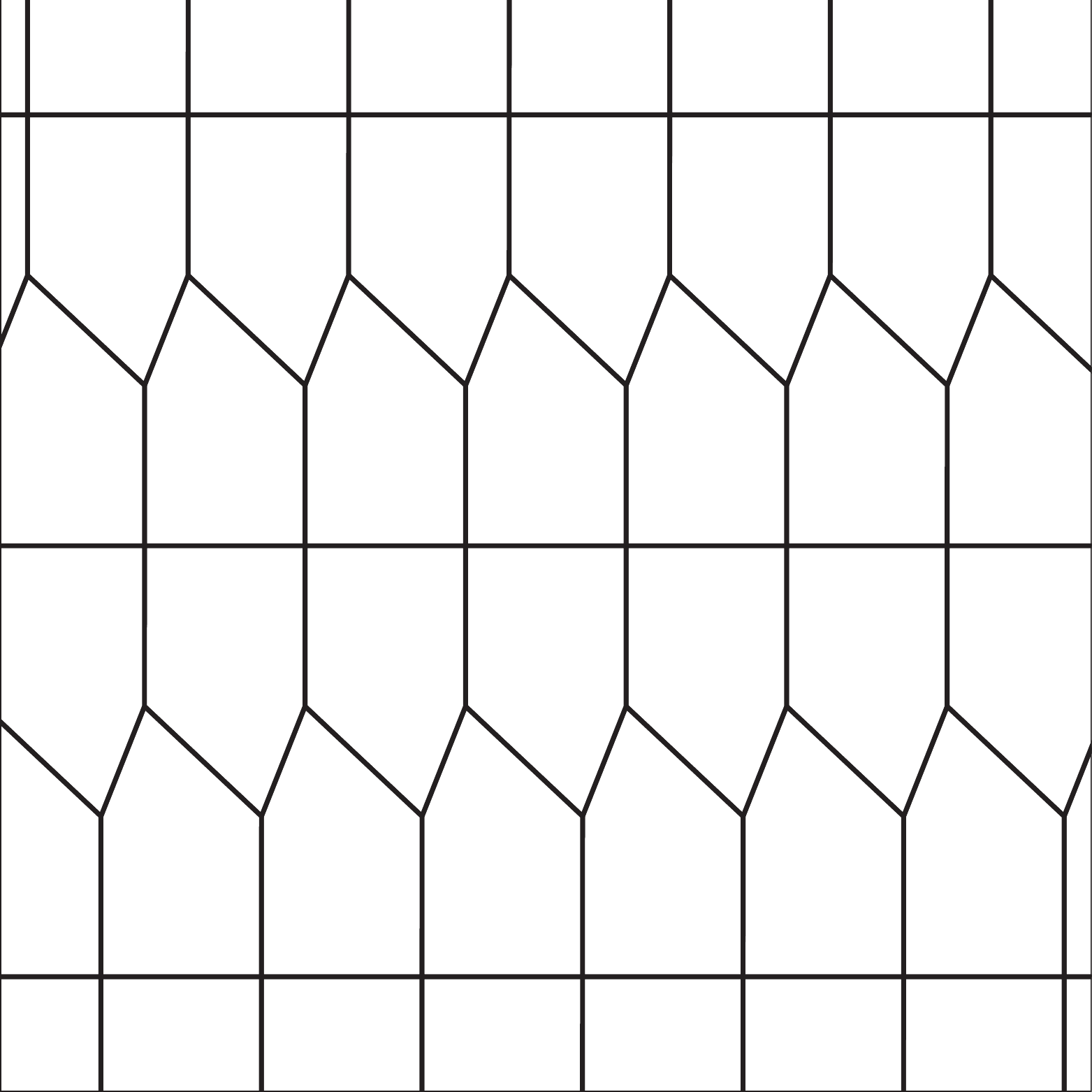}  
\caption*{\tiny Type 1\\$D+E=\pi$\\ \mbox{}}
\end{subfigure}
\begin{subfigure}[H]{.3\textwidth} 
\centering
\includegraphics[scale=.15]{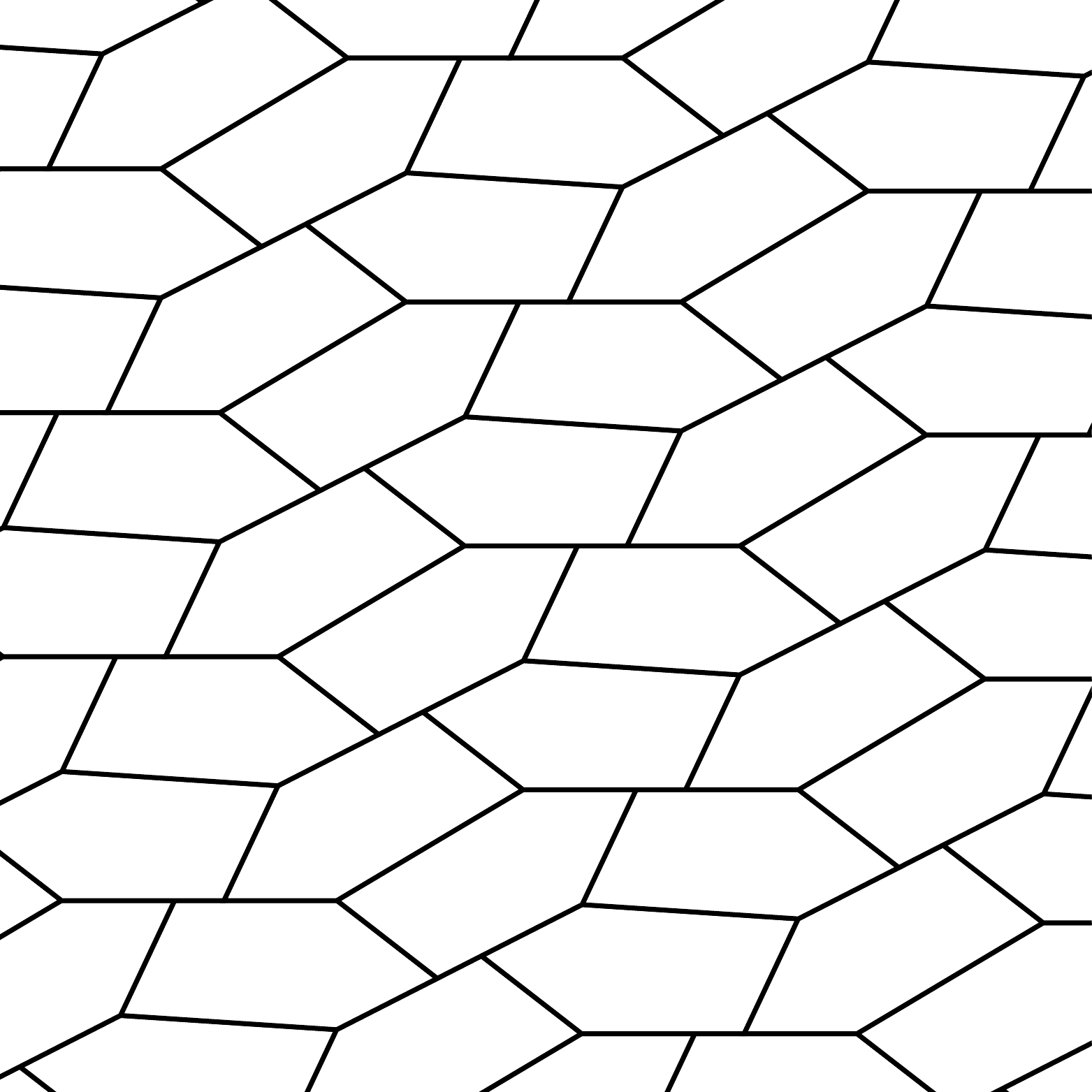}  
\caption*{\tiny Type 2\\$C+E = \pi$; $a = d$\\ \mbox{}}
\end{subfigure}
\begin{subfigure}[H]{.3\textwidth} 
\centering
\includegraphics[scale=.15]{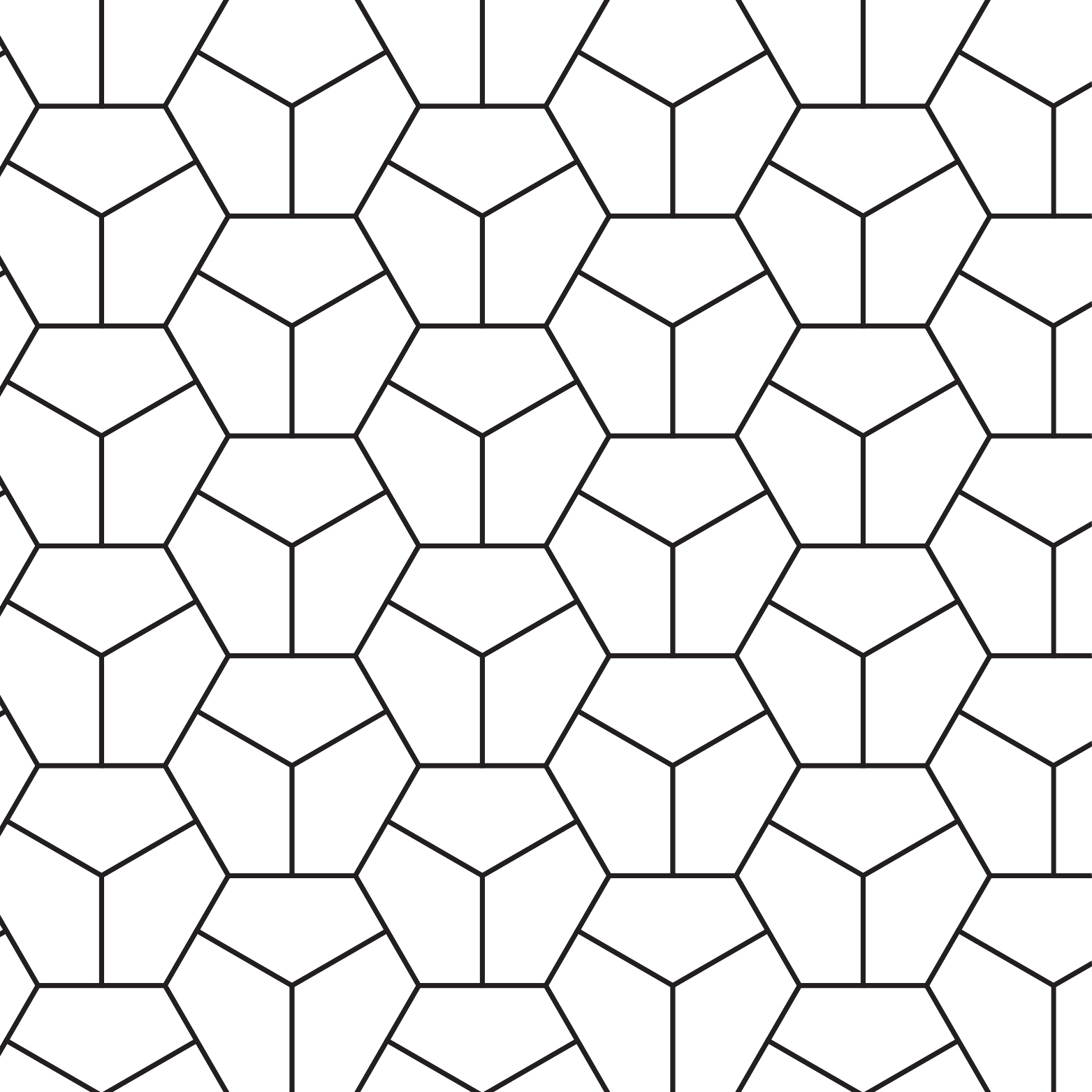}  
\caption*{\tiny Type 3\\$A=C=D=2\pi/3$;\\$a=b,d=c+e$}
\end{subfigure}
\begin{subfigure}[H]{.3\textwidth} 
\centering
\includegraphics[scale=.15]{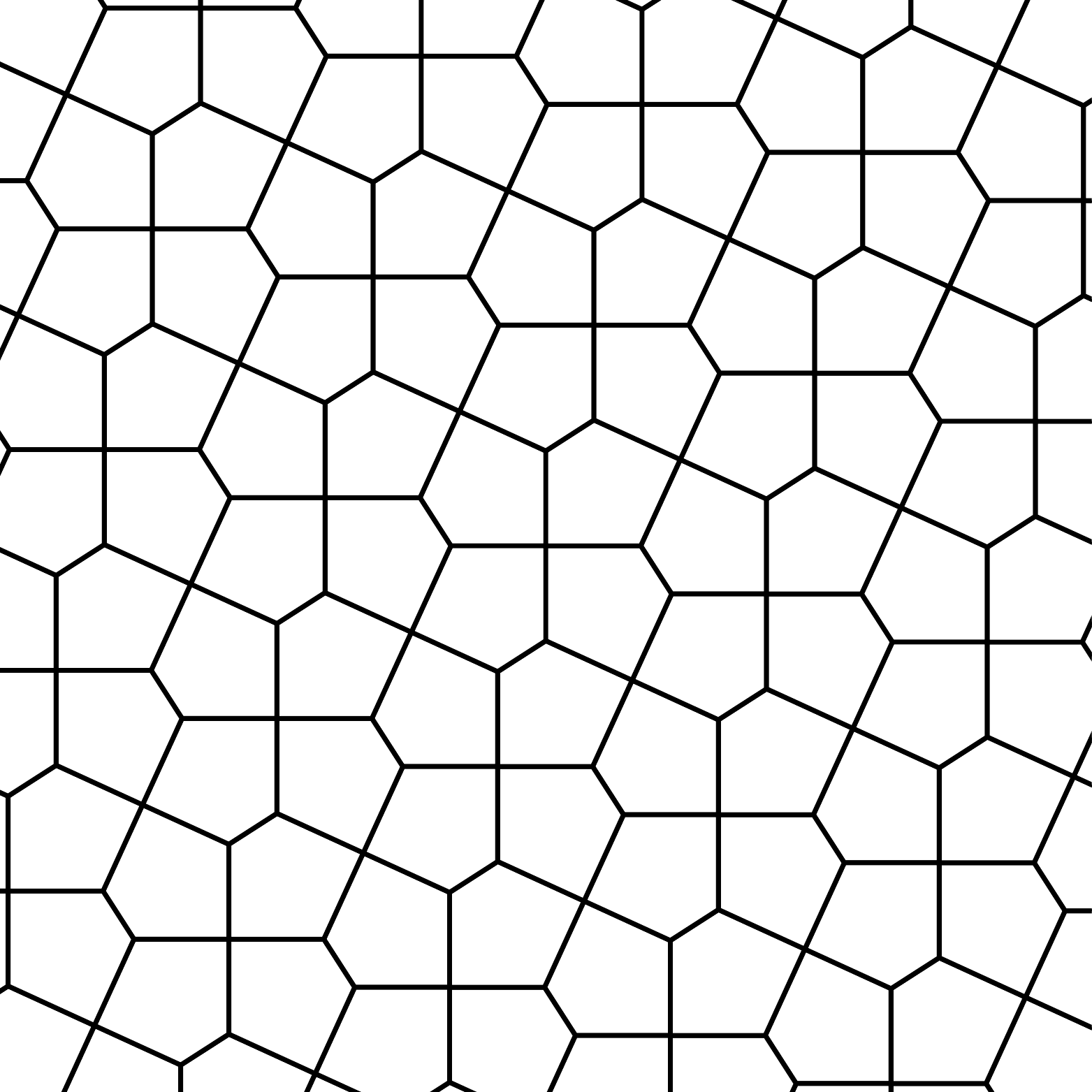}  
\caption*{\tiny Type 4\\$A=C=\pi/2$; \\$a = b, c= d$}
\end{subfigure}
\begin{subfigure}[H]{.3\textwidth} 
\centering
\includegraphics[scale=.15]{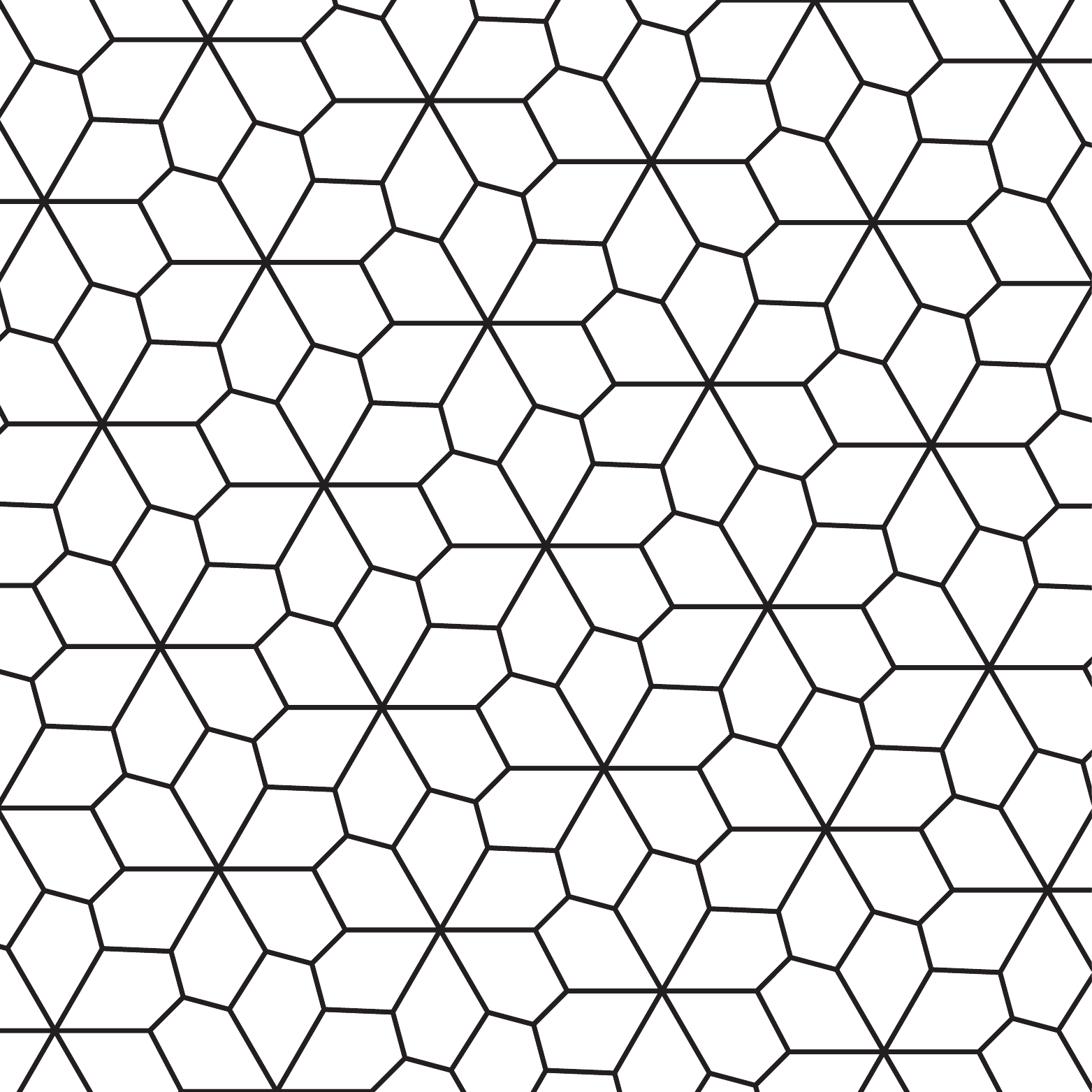}  
\caption*{\tiny Type 5\\$C = 2A = \pi/2$;\\$a= b$, $c = d$}
\end{subfigure}
\begin{subfigure}[H]{.3\textwidth} 
\centering
\includegraphics[scale=.15]{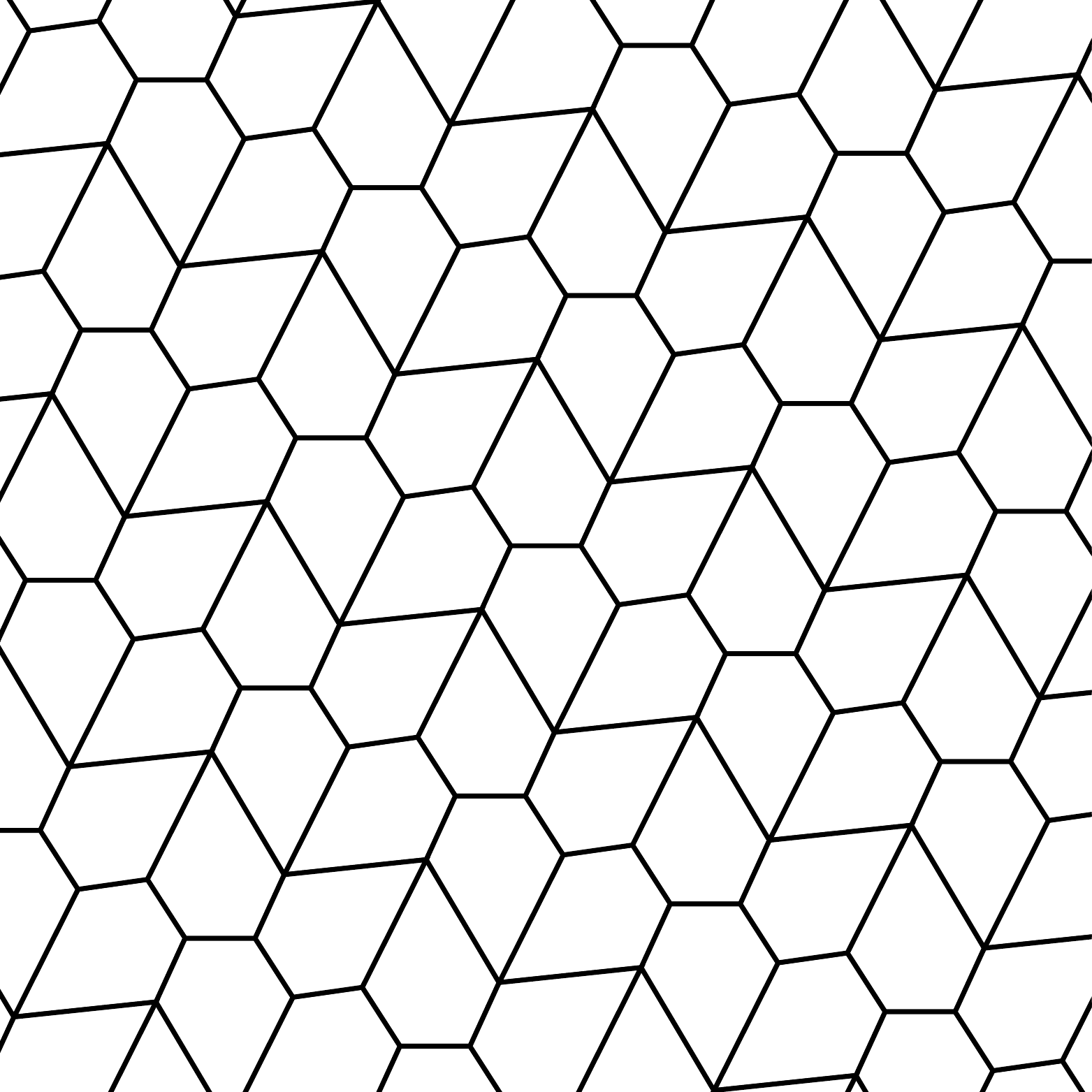}  
\caption*{\tiny Type 6\\$C+E = \pi$,$A = 2C$;\\$a=b=e,c=d$}
\end{subfigure}
\begin{subfigure}[H]{.3\textwidth} 
\centering
\includegraphics[scale=.15]{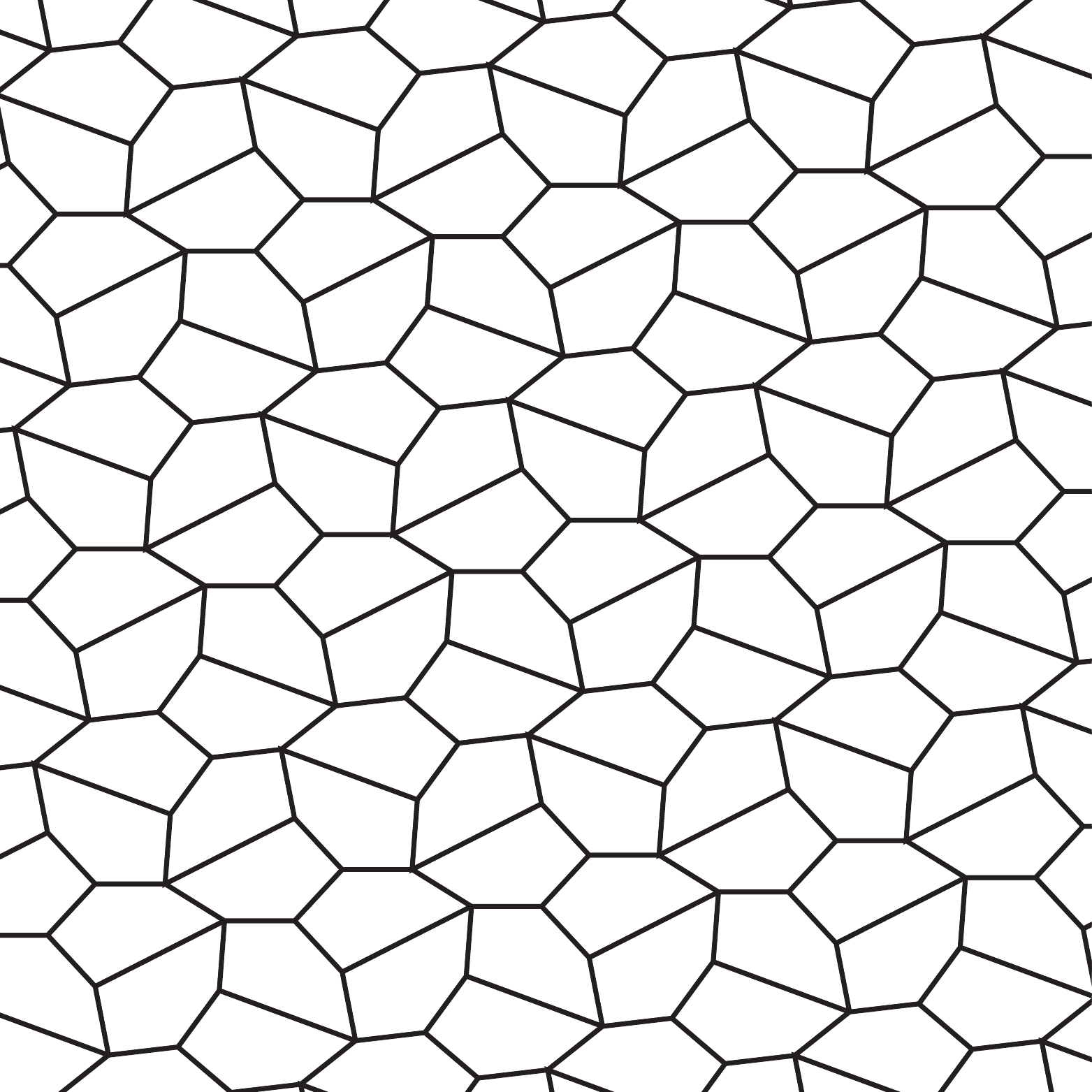}  
\caption*{\tiny Type 7\\$2B+C = 2\pi$,\\$2D+A=2\pi$;\\$a=b=c=d$}
\end{subfigure}
\begin{subfigure}[H]{.3\textwidth} 
\centering
\includegraphics[scale=.15]{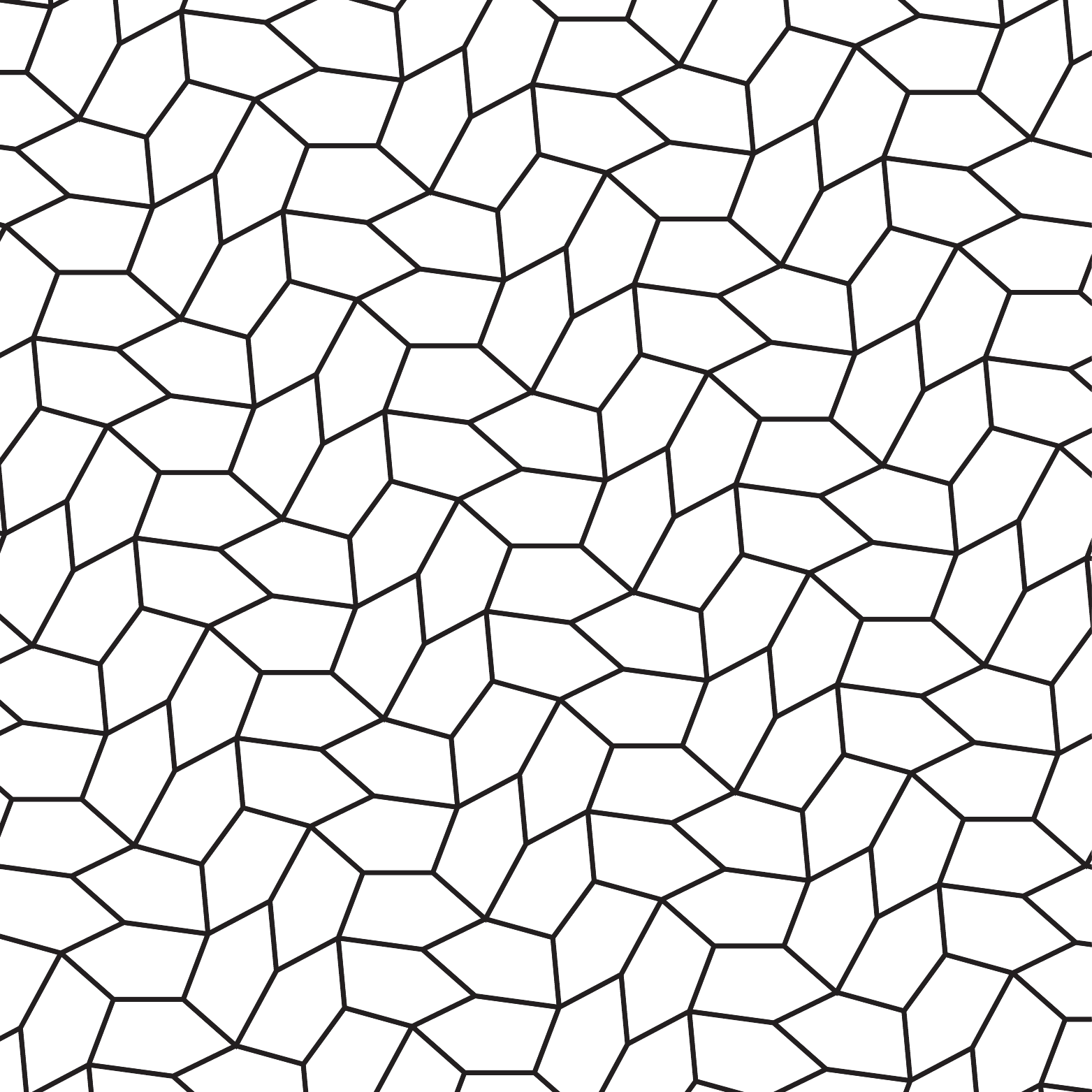}  
\caption*{\tiny Type 8\\$2A+B = 2\pi$,\\$2D+C = 2\pi$;\\$a=b=c=d$}
\end{subfigure}
\begin{subfigure}[H]{.3\textwidth} 
\centering
\includegraphics[scale=.15]{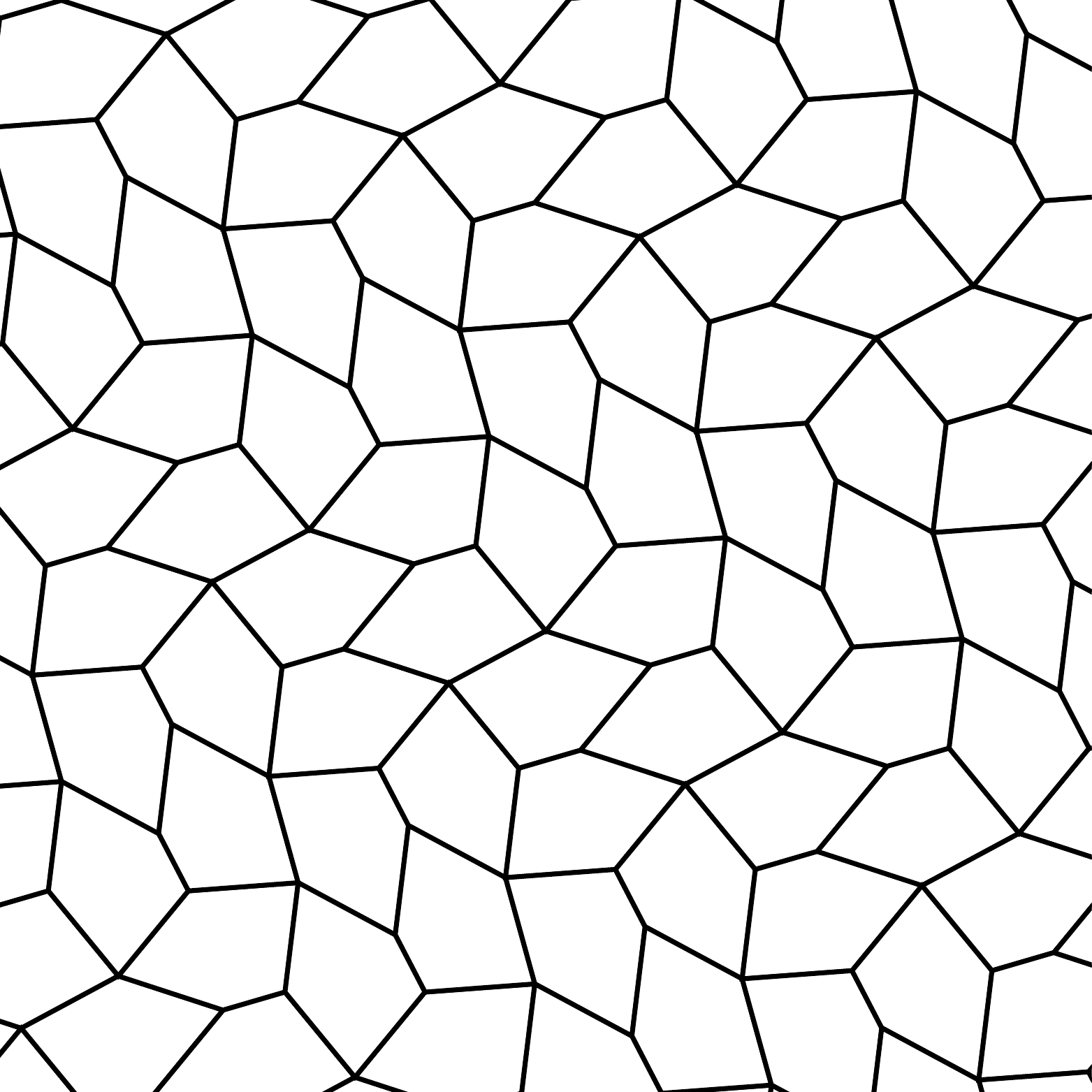}  
\caption*{\tiny Type 9\\$2E+B = 2\pi$,\\$2D+C = 2\pi$;\\$a=b=c=d$}
\end{subfigure}
\begin{subfigure}[H]{.3\textwidth} 
\centering
\includegraphics[scale=.15]{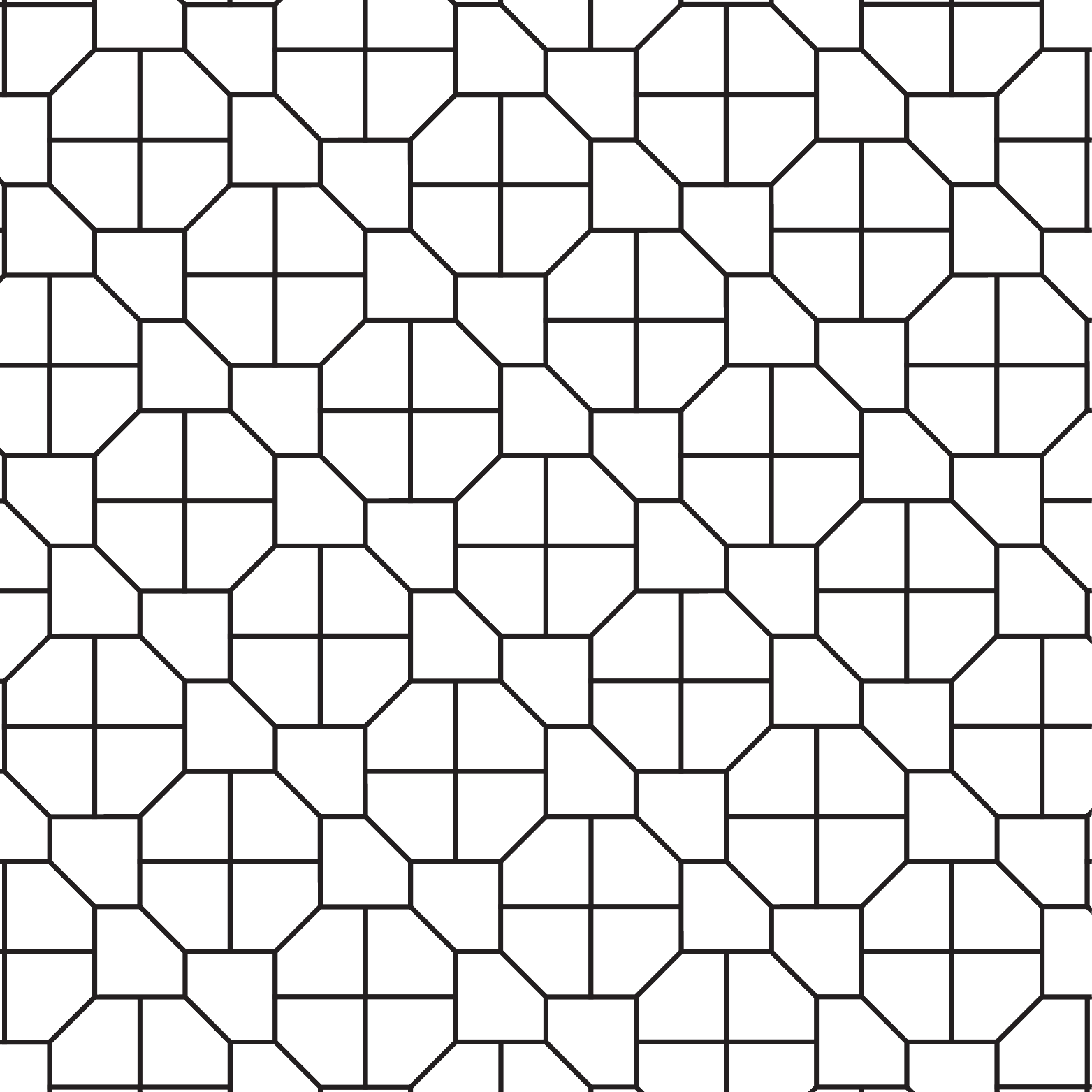}  
\caption*{\tiny Type 10\\$E = \pi/2$, $A+D = \pi$,\\$2B-D = \pi$,$2C+D=2\pi$;\\$a=e=b+d$}
\end{subfigure}
\begin{subfigure}[H]{.3\textwidth} 
\centering
\includegraphics[scale=.15]{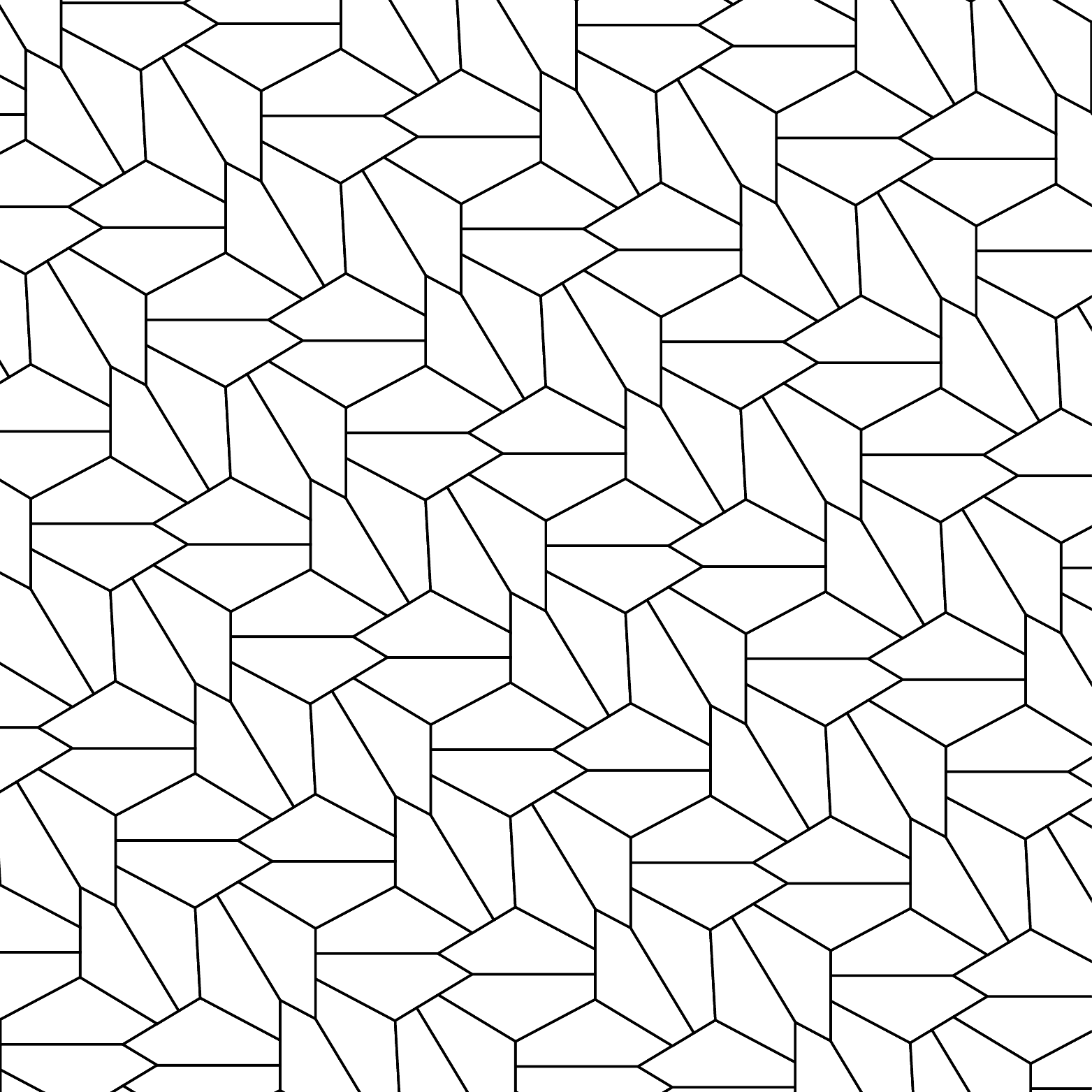}  
\caption*{\tiny Type 11\\$A=\pi/2$, $C+E = \pi$,\\$2B+C = 2\pi$;\\$d = e = 2a + c$}
\end{subfigure}
\begin{subfigure}[H]{.3\textwidth} 
\centering
\includegraphics[scale=.15]{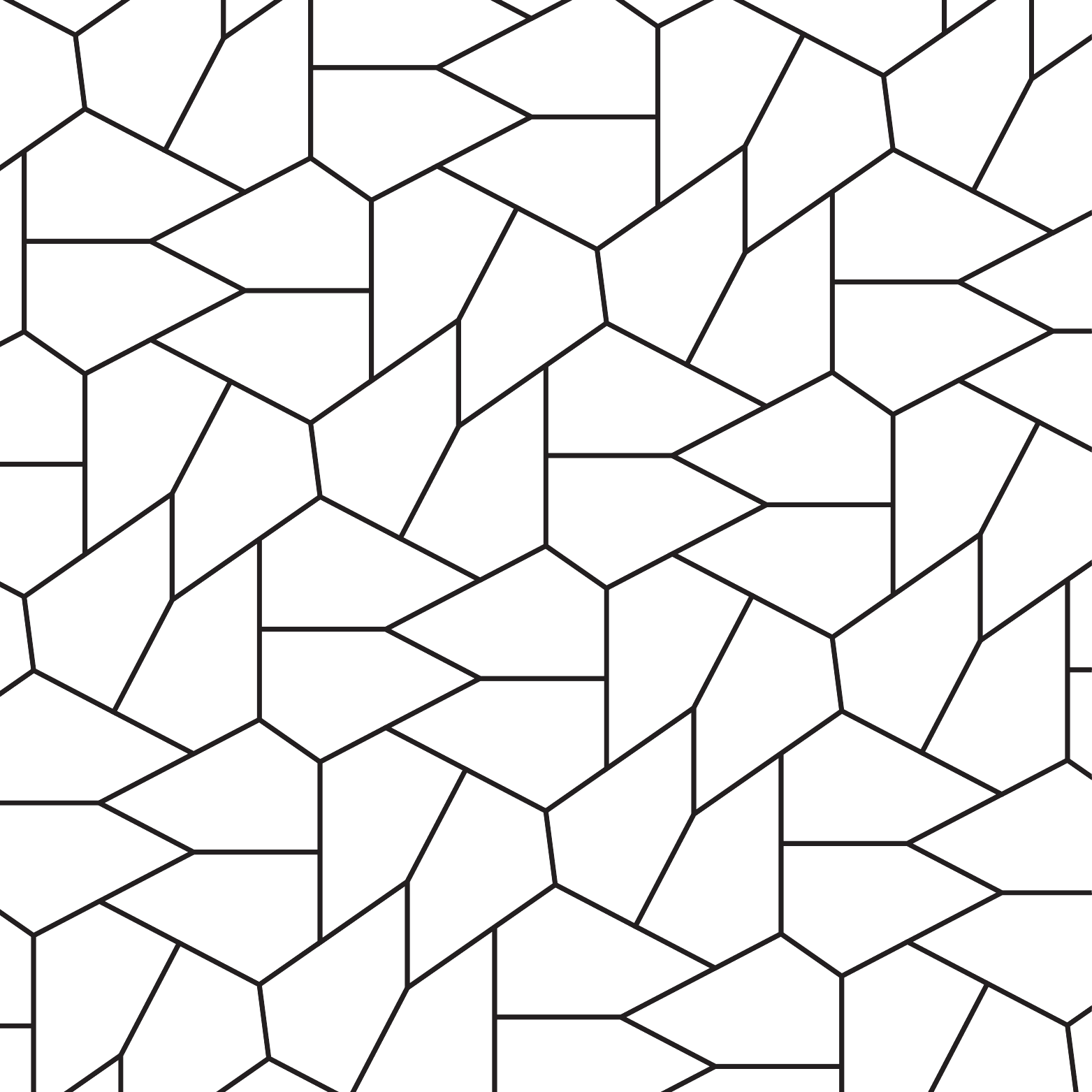}  
\caption*{\tiny Type 12\\$A = \pi/2$, $C+E=\pi$,\\$2B+C = 2\pi$;\\$2a=c+e=d$}
\end{subfigure}
\begin{subfigure}[H]{.3\textwidth} 
\centering
\includegraphics[scale=.15]{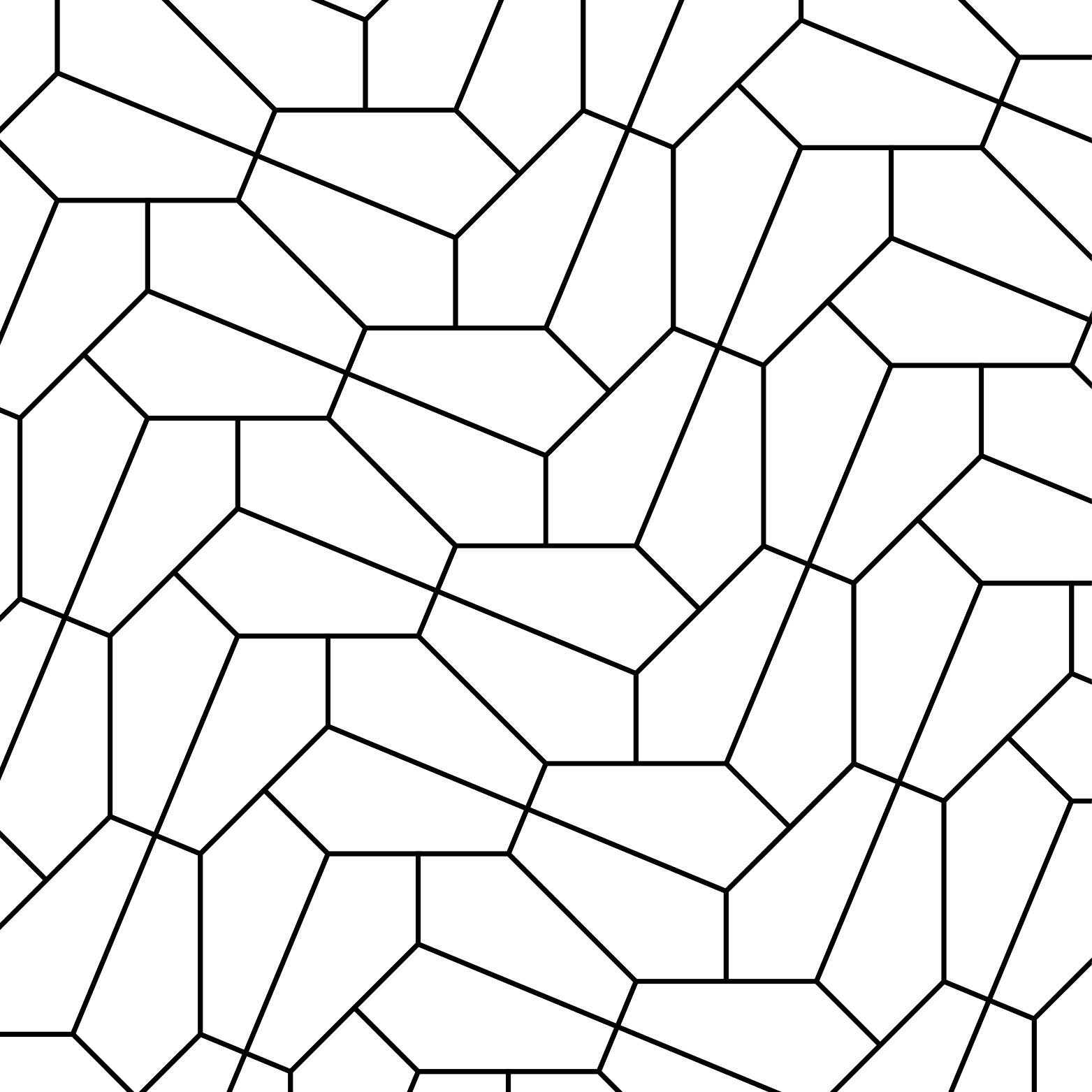}  
\caption*{\tiny Type 13\\$A=C=\pi/2$,\\$2B=2E=2\pi-D$;\\$c=d$, $2c=e$}
\end{subfigure}
\begin{subfigure}[H]{.3\textwidth} 
\centering
\includegraphics[scale=.15]{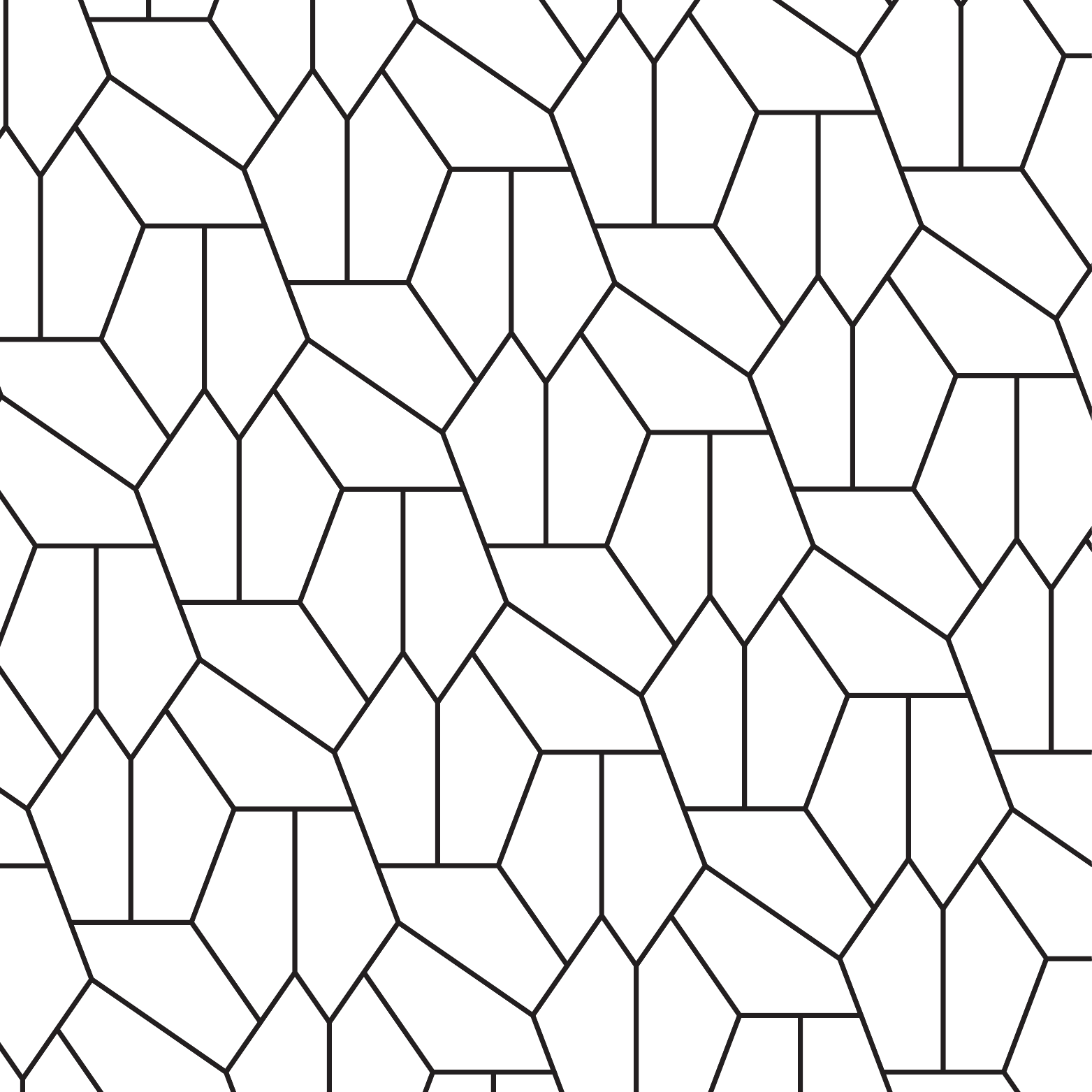}  
\caption*{\tiny Type 14\\$D=\pi/2$, $2E + A = 2\pi$,\\$A+C = \pi$;\\$b=c=2a=2d$}
\end{subfigure}\caption{Pentagon Types 1-14}\label{fig:pentagons}
\end{figure}

An \emph{$i$-block transitive tiling} $\mathscr{T}$ is a monohedral tiling by convex pentagons that contains a patch $\mathscr{B}$ conisting of $i$ pentagons such that (1) $\mathscr{T}$ consists of congruent images of $\mathscr{B}$, and (2) this corresponding tiling by copies of $\mathscr{B}$ is an isohedral tiling, and (3) $i$ is the minimum number of pentagons for which such a patch $\mathscr{B}$ exists. Such a patch $\mathscr{B}$ will be called an \emph{$i$-block}, and the corresponding isohedral tiling will be denoted by $\mathscr{I}$. 

\begin{figure}[H]
\centering
\begin{subfigure}[H]{.45\textwidth} 
\centering
\includegraphics[scale=.5]{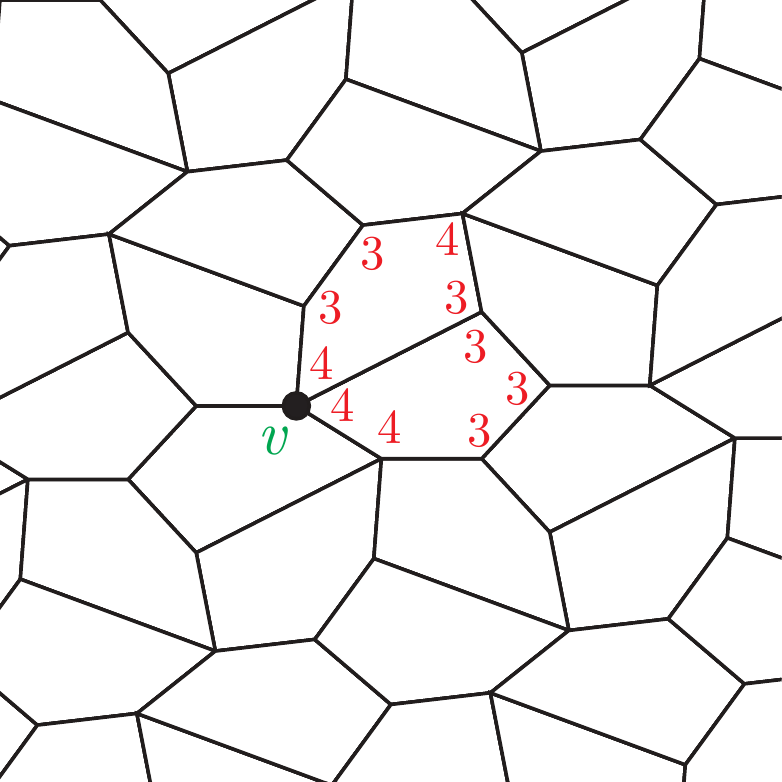}  
\caption{A type 7 tiling $\mathscr{T}$}\label{fig:type-7}
\end{subfigure}
\begin{subfigure}[H]{.45\textwidth}
\centering
\includegraphics[scale=.5]{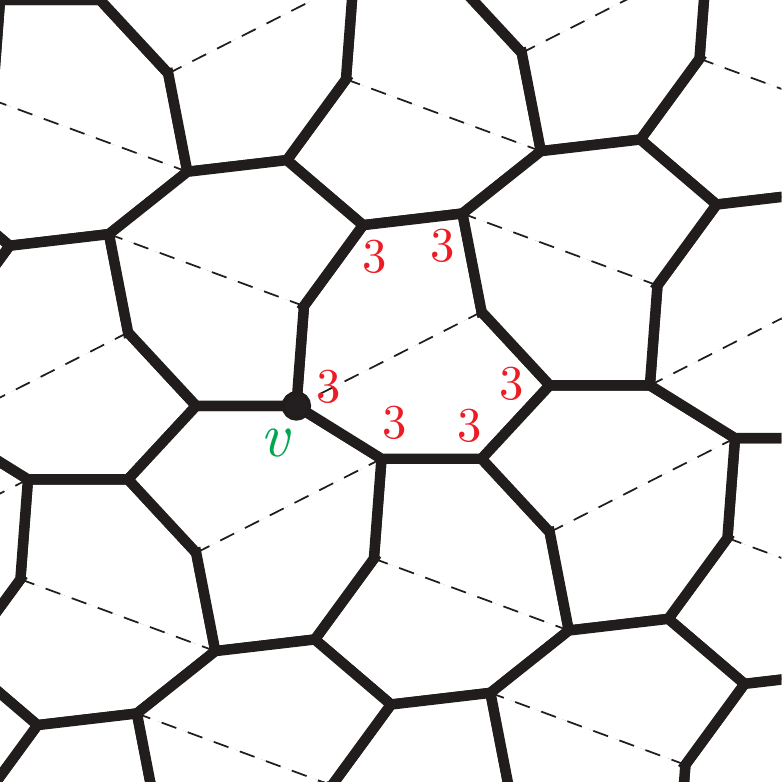}  
\caption{A corresponding tiling $\mathscr{I}$ tiling by 2-blocks.}\label{fig:2-block-tiling}
\end{subfigure}
\caption{A pentagon tiling $\mathscr{T}$ and a corresonding 2-block tiling $\mathscr{I}$}\label{fig:type-7-block}
\end{figure}  

If $v$ is a vertex of both $\mathscr{T}$ and $\mathscr{I}$, then let $V_{\mathscr{T}}(v)$ denote the valence of $v$ in $\mathscr{T}$ and let $V_{\mathscr{I}}(v)$ denote the valence of $v$ in $\mathscr{I}$. For example, for the designated vertex $v$  in Figure \ref{fig:type-7-block}, we see that $V_{\mathscr{T}}(v) = 4$, while $V_{\mathscr{I}}(v) = 3$.

Note that any periodic tiling by convex pentagons is necessarily $i$-block transitive for some $i$ (consider the pentagons comprising a fundamental region of the periodic tiling). It would be reasonable to conjecture that any unmarked convex pentagon that admits a tiling of the plane admits at least one periodic tiling; that is, it would be reasonable to conjecture that there are no aperiodic convex pentagons. If this conjecture is true, then all convex pentagons that admit tilings of the plane also admit at least one $i$-block transitive tiling. Thus, the class of pentagons being studied in this article may well encompass all possible pentagons that admit tilings of the plane.

\section{Combinatorial Results Concerning $i$-Block Transitive Tilings}

Suppose that a convex pentagon admits an $i$-block transitive tiling $\mathscr{T}$, let $\mathscr{B}$ be an $i$-block of $\mathscr{T}$, and let $P$ be any pentagon in $\mathscr{B}$. Define a \emph{node} of $P$ to be any vertex of $\mathscr{T}$ that lies on $P$. Note that the corners of $P$ are necessarily nodes, but $P$ may have nodes at other points besides its corners if the tiling is not edge-to-edge. 

\begin{theorem}  For an $i$-block transitive tiling $\mathscr{T}$ with $i$-block $\mathscr{B}$, suppose $\mathscr{B}$ has exactly $n$ nodes, counted with multplicity at nodes shared by multiple pentagons of $\mathscr{B}$, and let $\alpha_j$ denote the valence in $\mathscr{T}$ of the $j$-th node of $\mathscr{B}$. Then $\mathscr{T}$ is balanced with  \begin{equation}v(\mathscr{T})  = \frac{1}{i} \sum_{j = 1}^{n}\frac{1}{\alpha_j}\label{eqn:v}\end{equation} and \begin{equation}e(\mathscr{T}) = \frac{n}{2i}.\label{eqn:e}\end{equation}\label{thm:main} \end{theorem}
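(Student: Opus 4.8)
The plan is to use the fact that an $i$-block transitive tiling is forced to be periodic, reducing the computation of $v(\mathscr{T})$ and $e(\mathscr{T})$ to an exact count over a single period, and then to evaluate that count by a pair of weighted incidence arguments. Since the blocks tile the plane isohedrally as $\mathscr{I}$ and every isohedral tiling is periodic, the two independent translations in $S(\mathscr{I})$ carry blocks to blocks; because each block is a fixed configuration of $i$ pentagons, these translations are also symmetries of $\mathscr{T}$. Hence $\mathscr{T}$ is periodic with some lattice $\Lambda$, and in particular normal (its single pentagonal prototile is uniformly bounded). I would then pass to the quotient torus $\E^2/\Lambda$, on which $\mathscr{T}$ induces a finite cell complex with, say, $T$ faces, $E$ edges, and $V$ vertices. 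Periodicity implies $t(r,P)$, $e(r,P)$, $v(r,P)$ each grow like the relevant per-cell count times the number of cells met by $D(r,P)$, and the Normality Lemma lets me discard the $O(r)$ boundary cells against the $O(r^2)$ interior ones; thus the limits $v(\mathscr{T}) = V/T$ and $e(\mathscr{T}) = E/T$ exist, so $\mathscr{T}$ is balanced (and by Euler's Theorem for Tilings satisfies $v(\mathscr{T}) = e(\mathscr{T}) - 1$). Since the symmetries of $\mathscr{I}$ are symmetries of $\mathscr{T}$, every block carries the same node data $\alpha_1,\dots,\alpha_n$, so these quantities are well defined, and a fundamental domain contains exactly $b$ blocks and hence $T = ib$ pentagons.

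The combinatorial heart is a single incidence identity, for which I first need that at any vertex $w$ of $\mathscr{T}$ the number of pentagons surrounding $w$ equals its valence $V_{\mathscr{T}}(w)$. This holds for convex tilings even at non-edge-to-edge nodes: as one circles $w$, tiles and emanating edges strictly alternate, and each convex pentagon occupies a single angular sector there (possibly a straight angle, as at a ``T''-node), so the number of surrounding tiles equals the number of incident edges. With this in hand, summing the quantity ``number of nodes on $P$'' over all pentagons $P$ on the torus counts each vertex $w$ once for each surrounding pentagon, that is, $V_{\mathscr{T}}(w)$ times. Grouping the pentagons by their $b$ blocks and recalling that each block contributes exactly $n$ nodes counted with multiplicity yields the identity $\sum_{w} V_{\mathscr{T}}(w) = bn$, where the sum runs over the $V$ vertices of the torus.

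The edge formula is then immediate: each edge of $\mathscr{T}$ has two endpoints, so the handshake relation gives $\sum_{w} V_{\mathscr{T}}(w) = 2E$; combined with $\sum_{w} V_{\mathscr{T}}(w) = bn$ this gives $E = bn/2$, whence $e(\mathscr{T}) = E/T = (bn/2)/(ib) = n/(2i)$, establishing \eqref{eqn:e}. For the vertex formula I would instead weight each node-incidence by the reciprocal of the valence at that node. A vertex $w$ of valence $V_{\mathscr{T}}(w)$ arises from exactly $V_{\mathscr{T}}(w)$ node-incidences, each weighted by $1/V_{\mathscr{T}}(w)$, so $w$ contributes precisely $1$ to the weighted total, which therefore equals $V$. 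Grouping these incidences by the $b$ blocks and noting that each block's incidences have valences $\alpha_1,\dots,\alpha_n$, each block contributes $\sum_{j=1}^{n} 1/\alpha_j$, giving $V = b\sum_{j=1}^{n} 1/\alpha_j$ and hence $v(\mathscr{T}) = V/T = \tfrac{1}{i}\sum_{j=1}^{n} 1/\alpha_j$, which is \eqref{eqn:v}.

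The main obstacle I anticipate is not the algebra but the rigorous justification of the two bookkeeping steps. First is the claim that valence equals the number of surrounding tiles at \emph{every} vertex, including non-edge-to-edge nodes where a single pentagon may subtend a straight angle; this is exactly what makes the node count ``with multiplicity'' agree with $\sum_{w} V_{\mathscr{T}}(w)$, and it must be argued from the alternation of tiles and edges around a vertex together with convexity. Second is the passage from per-period counts to the defining limits: here one must carefully invoke the Normality Lemma to show that the boundary tiles of $\mathscr{A}(r,P)$ are negligible, so that $t(r,P)/(\pi r^2)$, $e(r,P)/(\pi r^2)$, and $v(r,P)/(\pi r^2)$ all converge and the ratios converge to $V/T$ and $E/T$. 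Once these two points are secured, the remainder is the two weighted incidence counts above.
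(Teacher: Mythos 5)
Your proposal is correct and follows essentially the same route as the paper: periodicity of $\mathscr{T}$ (inherited from the isohedral tiling by blocks) gives normality and balancedness, the Normality Lemma disposes of the boundary tiles, and both formulas come down to the same weighted incidence count — each vertex of valence $\alpha$ receives exactly $\alpha$ pentagon--node incidences, so weighting by $1/\alpha_j$ counts vertices once each while the handshake relation counts edges. The only difference is packaging: you make the count exact on a quotient torus, whereas the paper squeezes $v(r,P)$ between $\frac{t(r\mp 2iU,P)}{i}\sum_{j}\alpha_j^{-1}$ directly on growing patches; your version has the minor merit of making explicit the fact, left implicit in the paper, that the number of pentagons surrounding a vertex equals its valence even at non-edge-to-edge nodes.
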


\begin{proof} All tile-transitive tilings are periodic, and from this it follows that $\mathscr{T}$ is periodic as well. Additionally, all periodic tilings are balanced \cite{GS1}, and so $\mathscr{T}$ is balanced, and so the limits $v(\mathscr{T})$ and $e(\mathscr{T})$ exist. 

To find a formula for $v(\mathscr{T})$, let $P$ be any point of the plane and let $r > 0$. In the patch $\mathscr{A}(r,P)$, \begin{equation*} v(r,P) \approx \frac{t(r,P)}{i} \sum_{j = 1}^{n}\frac{1}{\alpha_j}.\end{equation*}  The reason this estimate is not exact is due to $i$-blocks and partial $i$-blocks on the boundary of $\mathscr{A}(r,P)$ whose pentagons are not completely surrounded by other pentagons in the patch. Observe that for large $r$, \begin{equation}\frac{t(r-2iU,P)}{i} \sum_{j = 1}^{n}\frac{1}{\alpha_j} \leq v(r,P) \leq \frac{t(r+2iU,P)}{i} \sum_{j = 1}^{n}\frac{1}{\alpha_j},\label{eqn:squeeze} \end{equation} where $U$ is the circumparameter of $\mathscr{T}$. The lower bound on $v(r,P)$ holds since no $i$-block of $\mathscr{A}(r-2iU)$ meets any $i$-block on the boundary of $\mathscr{A}(r,P)$, and similarly the upper bound follows from the fact that no $i$-block of $\mathscr{A}(r,P)$ meets a boundary $i$-block of $\mathscr{A}(r+2iU)$. Upon dividing Inequality $\ref{eqn:squeeze}$ through by $t(r,P)$, letting $r \rightarrow \infty$, and applying the Normality Lemma, we arrive at the desired result.


A similar argument establishes Equation \ref{eqn:e}.\end{proof}

Substituting Equations \ref{eqn:v} and \ref{eqn:e} in to Equation \ref{eqn:Euler} yields the following result.

\begin{corollary} For an $i$-block transitive tiling whose $i$-blocks each have $n$ nodes (counted with multiplicity), we have the following Diophantine equation. \begin{equation} \sum_{j = 1}^{n}\frac{1}{\alpha_j} = \frac{n - 2i}{2} \label{eqn:Dio}\end{equation} \end{corollary}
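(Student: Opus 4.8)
The plan is to obtain the Diophantine equation \ref{eqn:Dio} by substituting the two formulas from Theorem \ref{thm:main} directly into Euler's Theorem for Tilings, since the corollary is stated as an immediate consequence of these results. First I would recall that Theorem \ref{thm:main} guarantees that $\mathscr{T}$ is balanced, which is precisely the hypothesis needed to invoke Equation \ref{eqn:Euler}, namely $v(\mathscr{T}) = e(\mathscr{T}) - 1$. This is the key structural fact: balancedness is already established, so no new analytic or limiting argument is required here.

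Next I would write out the substitution explicitly. From Equation \ref{eqn:v} we have $v(\mathscr{T}) = \frac{1}{i}\sum_{j=1}^{n}\frac{1}{\alpha_j}$, and from Equation \ref{eqn:e} we have $e(\mathscr{T}) = \frac{n}{2i}$. Plugging these into $v(\mathscr{T}) = e(\mathscr{T}) - 1$ gives
\begin{equation*}
\frac{1}{i}\sum_{j=1}^{n}\frac{1}{\alpha_j} = \frac{n}{2i} - 1.
\end{equation*}
Multiplying through by $i$ clears the leading denominators and yields $\sum_{j=1}^{n}\frac{1}{\alpha_j} = \frac{n}{2} - i = \frac{n-2i}{2}$, which is exactly Equation \ref{eqn:Dio}. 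The algebra is completely routine.

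The only point that deserves comment — and what I would regard as the genuine content to verify rather than a real obstacle — is the observation that this is a \emph{Diophantine} equation: the left-hand side is a finite sum of reciprocals of the integer valences $\alpha_j \geq 3$, and for fixed $i$ and $n$ the right-hand side $\frac{n-2i}{2}$ is a rational constant. I would note that each $\alpha_j$ must be an integer at least $3$ (since every vertex of a tiling by convex polygons has valence at least $3$), which is what makes \ref{eqn:Dio} a finitely-solvable constraint and hence useful as the combinatorial backbone of the later search algorithm. Thus the proof is a one-line substitution together with this interpretive remark; there is no hard step, and I would keep the write-up correspondingly brief.
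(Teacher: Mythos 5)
Your proposal is correct and is exactly the paper's argument: the paper proves this corollary by the single remark that substituting Equations \ref{eqn:v} and \ref{eqn:e} into Euler's relation \ref{eqn:Euler} yields Equation \ref{eqn:Dio}, and your explicit substitution and clearing of the factor $i$ carries that out faithfully. The added remark about why the equation is Diophantine (integer valences $\alpha_j \geq 3$) is a sound, if optional, elaboration.
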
Note that since each pentagon has at least 5 nodes, we have $n \geq 5i$. Also, note that the left-hand side of Equation \ref{eqn:Dio} is maximized when $\alpha_j = 3$ for every $j$, which implies that $n \leq 6i$. 

\begin{corollary} For an $i$-block transitive tiling whose $i$-blocks each have $n$ nodes (counted with multiplicity), we have \begin{equation} 5i \leq n \leq 6i. \label{eqn:n-bounds}\end{equation}\end{corollary}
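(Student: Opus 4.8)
The plan is to derive the two inequalities separately, each as a direct consequence of the structure of the $i$-block together with the Diophantine equation (\ref{eqn:Dio}).

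For the lower bound, I would argue combinatorially. Each of the $i$ pentagons in $\mathscr{B}$ is a convex pentagon, and by the definition of a node every corner of a pentagon is a node of $\mathscr{T}$; thus each pentagon contributes at least $5$ nodes. Since $n$ counts the nodes of $\mathscr{B}$ with multiplicity at nodes shared by several pentagons, summing the individual node-counts of the $i$ pentagons recovers exactly $n$, and each summand is at least $5$. Hence $n \geq 5i$.

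For the upper bound, I would combine (\ref{eqn:Dio}) with the standard fact that in a normal tiling every vertex has valence at least $3$. Since every node of $\mathscr{B}$ is a vertex of $\mathscr{T}$, this gives $\alpha_j \geq 3$, and therefore $\frac{1}{\alpha_j} \leq \frac{1}{3}$ for each $j$. Summing over the $n$ nodes yields $\sum_{j=1}^{n} \frac{1}{\alpha_j} \leq \frac{n}{3}$, and replacing the left-hand side by means of (\ref{eqn:Dio}) gives $\frac{n-2i}{2} \leq \frac{n}{3}$. Clearing denominators and simplifying then produces $n \leq 6i$.

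The only step that requires care is the claim $\alpha_j \geq 3$. I would justify this by observing that the interior angles of the tiles meeting at any node must sum to $2\pi$, while each interior angle of a convex polygon is strictly less than $\pi$; hence at least three tiles, and so at least three edges of $\mathscr{T}$, meet at every node. (Equivalently, a point of valence $2$ would be an interior point of an edge rather than a genuine vertex.) Once this is granted, both bounds are immediate, and the corollary follows from the Diophantine equation with no further computation.
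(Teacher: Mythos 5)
Your proof is correct and follows the same route as the paper: the lower bound from each of the $i$ convex pentagons contributing at least $5$ corners as nodes, and the upper bound from maximizing the left side of Equation \ref{eqn:Dio} by taking every $\alpha_j = 3$. Your explicit justification that $\alpha_j \geq 3$ (angles at a vertex sum to $2\pi$ while each convex interior angle is less than $\pi$) is a detail the paper leaves implicit, but the argument is otherwise identical.
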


Inequality \ref{eqn:n-bounds} is nice as it establishes an upper bound on just how badly non-edge-to-edge an $i$-block transitive tiling can be. \emph{Consequently, for each positive integer $i$, there are only a finite number of types of convex pentagon that admit $i$-block transitive tilings.}

We note a few other interesting consequences of Theorem \ref{thm:main}.  Let $3_{\text{min}}$ and $3_{\text{max}}$ denote the minimum and maximum number of 3-valent nodes of an $i$-block with $n$ nodes (counted with multiplicity). Notice that the left-hand side of Equation \ref{eqn:Dio} is minimized when as few as possible of the $\alpha_j$'s are 3's, so $3_{\text{min}}$ is determined by solving the equation \[\frac{n - 2i}{2} = \frac{3_{\text{min}}}{3} + \frac{n-3_{\text{min}}}{4}\] for $3_{\text{min}}$, obtaining \begin{equation} 3_{\text{min}} = 3n - 12i. \label{eqn:3min}\end{equation} Similarly, since the number $k$ of 3-valent nodes in an $i$-block must satisfy \[\frac{k}{3} \leq \frac{n - 2i}{2},\] we see that \begin{equation} 3_{\text{max}} = \left \lfloor \frac{3n - 6i}{2} \right \rfloor. \label{eqn:3max}\end{equation}

We may make another observation concerning Equation \ref{eqn:Dio}: If $p$ is the average vertex valence, then \[\frac{n}{p} = \frac{n - 2i}{2},\] so \[p = \frac{2n}{2n-i},\] and by Inequality \ref{eqn:n-bounds}, we see that \begin{equation}3 \leq p \leq \frac{10}{3}. \label{eqn:p-bounds}\end{equation} $p = 3$ corresponds to the case that pentagons of $\mathscr{T}$ have on average 6 nodes (allowing for straight angles in non-edge-to-edge tilings by pentagons), and $p = 10/3$ corresponds to the case that the tiling is edge-to-edge (corresponding to a result in \cite{OB1}). This makes it clear that in any $i$-block transitive tiling, there will be some 3-valent nodes and (except when $p = 3$) some nodes with valence $k \geq 4$.

For specific values of $n$ and $i$, all solutions (for the $\alpha_j$) of Equation \ref{eqn:Dio} can be found. If $\alpha_1, \alpha_2, \ldots, \alpha_n$ is a solution, we will denote that solution by $\left<\alpha_1. \alpha_2.\ldots.\alpha_n\right>$ and call it an \emph{$(i,n)$-block species}. We will use exponents to indicate repeated values of $\alpha_i$. For example, in Figure \ref{fig:type-7-block}, the 2-block is of species $\left<4.3.3.4.4.4.3.3.3.3\right> = \left<4^4.3^6\right>$. In Table \ref{tab:Dio-solutions}, all $(i,n)$-block species are listed for $n \leq 3$.

\section{Possible Topological Types for $(i,n)$-block Species}

Let $\mathscr{T}$ be an $i$-block transtive tiling by congruent convex pentagons and let $\mathscr{I}$ be the corresponding isohedral tiling by $i$-block $\mathscr{B}$. Since $\mathscr{I}$ is isohedral, then it is one of 11 topological types, and from among these 11 topological types, the maximum vertex valence is 12 \cite{GS2}. Further, since at most $i$ pentagons meet at any node of $\mathscr{B}$, then in Equation \ref{eqn:Dio}, we must have \begin{equation} \alpha_j \leq 12i \label{eqn;max-valence}\end{equation} for all $j$. Inequality \ref{eqn;max-valence}† ensures that Equation \ref{eqn:Dio} has finitely many solutions for any $i$ and that these solutions can, for small values of $i$, be quickly found using a simple computer algorithm. The numbers in $\{V_{\mathscr{I}}(v)| v \text{ is a vertex of } \mathscr{I} \cap \mathscr{T}\}$ are exactly the numbers appearing in the topological type for $\mathscr{I}$, and this observation gives rise to the following facts.

\begin{lemma} A vertex $v$ of both $\mathscr{I}$ and $\mathscr{T}$ satisfies 

	\begin{enumerate}
	
	\item $V_{\mathscr{I}}(v) \leq V_{\mathscr{T}}(v)$\label{lem:maxval-general} 
	\item  $V_{\mathscr{T}}(v) \leq 3i$ if $\mathscr{I}$ has topological type $[3^6]$
	\item $V_{\mathscr{T}}(v) \leq  4i$ if $\mathscr{I}$ has topological types $[3^3.4^2]$, $[3^2.4.3.4]$, or $[4^4]$
	\item $V_{\mathscr{T}}(v) \leq  6i$ if $\mathscr{I}$ has topological types $[3^4.6]$, $[3.6.3.6]$, $[6^3]$,  or $[3.4.6.4]$
	\item $V_{\mathscr{T}}(v) \leq  8i$ if $\mathscr{I}$ has topological type $[4.8^2]$
	\item $V_{\mathscr{T}}(v) \leq  12i$ if $\mathscr{I}$ has topological type $[3.12^2]$
	
	\end{enumerate}
	
\label{lem:maxval} \end{lemma}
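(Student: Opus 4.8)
The plan is to prove each clause by comparing the local structure of a vertex $v$ in the two tilings $\mathscr{I}$ and $\mathscr{T}$. The central observation, already flagged in the paragraph preceding the statement, is that the numbers $V_{\mathscr{I}}(v)$ for vertices $v$ shared by $\mathscr{I}$ and $\mathscr{T}$ are exactly the entries of the topological symbol of $\mathscr{I}$. So for clause \ref{lem:maxval-general} I would argue directly from the geometry: every edge of $\mathscr{I}$ incident to $v$ is a union of edges of $\mathscr{T}$ (since $\mathscr{T}$ refines $\mathscr{I}$ — each $i$-block is subdivided into its constituent pentagons, and the pentagon sides may introduce extra nodes along the block boundary but never delete a block edge). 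Hence each of the $V_{\mathscr{I}}(v)$ edge-ends meeting $v$ in $\mathscr{I}$ corresponds to at least one edge-end meeting $v$ in $\mathscr{T}$, giving $V_{\mathscr{I}}(v) \le V_{\mathscr{T}}(v)$ immediately.

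For clauses 2 through 6, the uniform bounds all share one mechanism, so I would set up a single counting argument and then specialize. Fix a vertex $v$ and consider the $i$-blocks of $\mathscr{I}$ that contain $v$; there are exactly $V_{\mathscr{I}}(v)$ of them, since $v$ is a vertex of the isohedral tiling $\mathscr{I}$ and the blocks meeting at $v$ are in bijection with the corners of $\mathscr{B}$ that land on $v$ under the symmetry action. Now I count the edge-ends of $\mathscr{T}$ emanating from $v$. Each such edge-end lies inside one of the $V_{\mathscr{I}}(v)$ surrounding $i$-blocks (or along a shared block boundary, in which case I attribute it to one adjacent block to avoid double counting). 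Within a single $i$-block, which contains only $i$ pentagons, the number of edge-ends of $\mathscr{T}$ at any one point is at most... this is where the count per block must be pinned down carefully. A cleaner route is to bound the total directly: $V_{\mathscr{T}}(v)$ equals the number of distinct tiles of $\mathscr{T}$ (pentagons) meeting $v$, and each surrounding $i$-block contributes at most $i$ pentagons to that count, so
\begin{equation*}
V_{\mathscr{T}}(v) \le i \cdot V_{\mathscr{I}}(v).
\end{equation*}
Then clauses 2--6 follow by substituting the maximum possible value of $V_{\mathscr{I}}(v)$ allowed by each listed topological type: $V_{\mathscr{I}}(v) \le 3$ for $[3^6]$, $\le 4$ for the three types in clause 3, $\le 6$ for the four types in clause 4, $\le 8$ for $[4.8^2]$, and $\le 12$ for $[3.12^2]$, reading the largest symbol entry in each case.

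I expect the main obstacle to be justifying the inequality $V_{\mathscr{T}}(v) \le i \cdot V_{\mathscr{I}}(v)$ rigorously, specifically the claim that each $i$-block meeting $v$ contributes at most $i$ pentagons incident to $v$. The subtlety is that $v$ is a vertex of $\mathscr{I}$ but need only be a \emph{node} — not necessarily a corner — of the pentagons inside a block, so a pentagon could touch $v$ along the interior of a side rather than at a corner, and one must confirm such a pentagon is still counted correctly and that no block can wrap enough pentagons around $v$ to exceed $i$. Since an $i$-block contains exactly $i$ pentagons by definition, the count of pentagons from that block touching $v$ is trivially at most $i$; the real content is confirming that every tile of $\mathscr{T}$ contributing a vertex-angle at $v$ belongs to exactly one of the $V_{\mathscr{I}}(v)$ blocks surrounding $v$, with no tile from a non-incident block reaching $v$. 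This follows because the blocks tile a neighborhood of $v$ with the combinatorics of $\mathscr{I}$, so any pentagon touching $v$ lies in a block touching $v$, closing the argument.
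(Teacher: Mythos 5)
Your argument is correct and follows the same route the paper intends: the paper states this lemma without a formal proof, justifying it by the observations that at most $i$ pentagons of any $i$-block meet at a node and that $V_{\mathscr{I}}(v)$ is always one of the entries of the topological type, which is exactly the bound $V_{\mathscr{T}}(v) \le i\cdot V_{\mathscr{I}}(v)$ combined with the largest symbol entry that you derive. Your filling-in of the details (each of the $V_{\mathscr{I}}(v)$ blocks at $v$ contributes at most $i$ pentagons, and every pentagon at $v$ lies in such a block) is sound.
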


Referring to Figure \ref{fig:type-7-block}, we see that the inequality of Lemma \ref{lem:maxval}, Part \ref{lem:maxval-general} need not be an equality. The next results concerns those vertices of $\mathscr{T}$ that are not also vertices of $\mathscr{I}$; these vertices are in the interior of edges in $\mathscr{I}$, and as such, these vertices play a key role in how copies of $i$-blocks can meet in $\mathscr{I}$.

\subsection{Adjacency Conditions for $i$-Blocks}

Let $\mathscr{B}$ be an $i$-block for an $i$-block transitive tiling $\mathscr{T}$, and let $\beta_1, \beta_2, \ldots, \beta_n$ be the vertices of $\mathscr{T}$ on the boundary of $\mathscr{B}$, taken in order with respect to an orientation on $\mathscr{B}$. Let $b_i$ denote the number of pentagons of $\mathscr{B}$ that are incident with $\beta_i$. Then the \emph{boundary code} of $\mathscr{B}$ is the finite sequence \[\partial(\mathscr{B})=b_1b_2\ldots b_n.\] For example, the 2-block of Figure \ref{fig:type-7-block} has boundary code $\partial(\mathscr{B}) = 21112111$.

Because $\mathscr{B}$ is a prototile for isohedral tiling $\mathscr{I}$, then $\mathscr{B}$ has an associated incidence symbol that prescribes the manner in which copies of $\mathscr{B}$ are  surrounded by incident copies of $\mathscr{B}$. For example, if $\mathscr{I}$ is of isohedral type IH12, which has topological type $[3^6]$ and incidence symbol $[ab^{+}c^{+}dc^{-}b^{-};dc^{-}b^{-}a]$, then $\mathscr{B}$ tiles the plane as a topological hexagon, and its boundary is partitioned into 6 arcs that must match one another according to the incidence symbol (we refer the reader to \cite{GS1} or \cite{GS2} for an explanation of incidence symbols). A valid partition of the boundary of $\mathscr{B}$ must be compatible with this incidence symbol. The endpoints of the arcs forming the partition of the boundary of $\mathscr{B}$ will be indicated by placing over bars on the corresponding entries of $\partial(\mathscr{B})$; we will call a boundary code so marked a \emph{partitioned boundary code} and denote it by $\overline{\partial}(\mathscr{B})$. For example, the 3-block of Figure \ref{fig:2-block-tiling} has partitioned boundary code  \[\overline{\partial}(\mathscr{B}) = \bar{2}\bar{1}\bar{1}\bar{1}2\bar{1}\bar{1}1.\] The unmarked elements in $\overline{\partial}(\mathscr{B})$ correspond to the vertices of $\mathscr{T}$ on the boundary of $\mathscr{B}$ that are not vertices of $\mathscr{I}$. Thus, an edge of $\mathscr{B}$ of length $k$ corresponds to a subsequence of $\overline{\partial}(\mathscr{B})$ of the form \[e = \overline{b_i}b_{i+1}b_{i+2} b_{i+k-1}\overline{b_{i+k}}.\] As in the incidence symbols for the isohedral types, we will use superscripts to indicate the orientation of edges with respect to their mother tiles.

\begin{lemma}[The Matching Lemma] Let \[e_1 =\overline{b_i} b_{i+1} b_{i+2}\ldots b_{i+k-1}\overline{b_{i+k}} \text{ and } e_2 = \overline{b_j} b_{j+1} b_{j+2}\ldots b_{j+k-1}\overline{b_{j+k}}\] be two length $k$ edges on the boundary of $\mathscr{B}$, allowing for the possibility that $e_1 = e_2$. 
\begin{enumerate}
\item $e_{1}^{+}$ may meet $e_{2}^{+}$ (or $e_{1}^{-}$ may meet $e_{2}^{-}$) if $b_{i+t} + b_{j+k-t} = V_{\mathscr{T}}(\beta_{i+t}) = V_{\mathscr{T}}(\beta_{j+k-t})$ for each integer $t$, $1 \leq t \leq k-1$. 
\item $e_{1}^{+}$ may meet $e_{2}^{-}$ if $b_{i+t} + b_{j+t} = V_{\mathscr{T}}(\beta_{i+t})$ for each integer $t$, $1 \leq t \leq k-1$. 
\item $e_{1}$ may meet $e_{2}$ if both of the previous two conditions hold.
\item ($1$s cannot meet $1$s) In particular, in the case that $e_{1}^{+}$ meets $e_{2}^{+}$, we must have $b_{i+t} + b_{j+k-t} \geq 3$,  so it can never be the case that $b_{i+t} = 1 = b_{j+k-t}$. Similarly, in the case that $e_{1}^{+}$ meets $e_{2}^{-}$, it never happen that $b_{i+t} = 1 = b_{j+t}$.\label{1s-no-match}
\item (Interior vertices cannot be too large) For each vertex $\beta$ in the interior of an edge on the boundary of $\mathscr{B}$, $V_{\mathscr{T}}(\beta) \leq 2i$. \label{bigint}
\end{enumerate}
\label{lem:matching} \end{lemma}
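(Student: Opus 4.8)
The plan is to understand how two edges of $\mathscr{B}$ can be glued in the isohedral tiling $\mathscr{I}$ and then translate that gluing into arithmetic conditions on the boundary code. The fundamental observation is that when an edge $e_1$ of a copy of $\mathscr{B}$ meets an edge $e_2$ of an adjacent copy, the two edges coincide as segments, so each interior vertex $\beta_{i+t}$ of $e_1$ is identified with exactly one interior vertex of $e_2$. The total valence in $\mathscr{T}$ at that shared point is the sum of the contributions from both sides of the edge: the $b$-value counts the number of pentagons of the first block incident to that point, and the matching $b$-value from the second block counts the pentagons incident from the other side. Since every vertex of $\mathscr{T}$ on this shared segment must be fully surrounded by pentagons, these two contributions add up to the full valence $V_{\mathscr{T}}$ of the vertex.

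First I would fix the orientation conventions. For a $+/+$ meeting, the two edges are traversed in the \emph{same} direction, so the $t$-th interior vertex $\beta_{i+t}$ of $e_1$ is identified with the $(k-t)$-th interior vertex $\beta_{j+k-t}$ of $e_2$ (the reversal coming from the fact that when two oriented edges are glued to bound opposite sides, one reads from opposite ends). This gives the pairing in Part 1: $b_{i+t} + b_{j+k-t} = V_{\mathscr{T}}(\beta_{i+t}) = V_{\mathscr{T}}(\beta_{j+k-t})$. For a $+/-$ meeting the orientations are reversed relative to each other, so the vertices are matched in the \emph{same} index order, yielding $b_{i+t}+b_{j+t}=V_{\mathscr{T}}(\beta_{i+t})$ of Part 2. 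Part 3 is then immediate: if $e_1$ may meet $e_2$ in both orientations, both sets of equalities must hold simultaneously. The bulk of the work is getting these two orientation bookkeeping statements exactly right, and I expect that to be the main obstacle — it is purely combinatorial but error-prone, since one must track which end of each edge is glued to which and confirm the index reversal in the $+/+$ case against a concrete example such as the $2$-block of Figure \ref{fig:type-7-block}.

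Parts 4 and 5 then follow as corollaries of 1--3. For Part 4, observe that every $b_\ell \geq 1$ because each interior vertex of an edge lies on at least one pentagon of $\mathscr{B}$ (it is, after all, a vertex of $\mathscr{T}$ on the boundary of $\mathscr{B}$). A vertex of $\mathscr{T}$ has valence at least $3$, since three tiles meet at every vertex of a tiling by convex polygons. Hence in a $+/+$ meeting $b_{i+t}+b_{j+k-t} = V_{\mathscr{T}}(\beta_{i+t}) \geq 3$, and since $b_{i+t}\geq 1$ the complementary term cannot force $b_{i+t}=1=b_{j+k-t}$ (that would give a total of $2<3$); the same argument applies verbatim to the $+/-$ case, establishing that $1$s cannot meet $1$s. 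For Part 5, let $\beta$ be any vertex in the interior of an edge of $\partial(\mathscr{B})$. At $\beta$ the block $\mathscr{B}$ contributes some $b_\ell$ pentagons, and the neighboring block contributes the matching value, but each of $b_\ell$ and its partner is bounded above by $i$ since at most $i$ pentagons of any single $i$-block are incident to a node. Therefore $V_{\mathscr{T}}(\beta) = b_\ell + (\text{partner}) \leq i + i = 2i$, which is the desired bound. No deep estimate is needed here; the only care required is to note that an interior-edge vertex receives contributions from exactly the two blocks sharing that edge, so its valence is genuinely a sum of two block-contributions each capped by $i$.
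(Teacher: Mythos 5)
The paper states the Matching Lemma without proof, so there is no argument of the authors' to compare against; your proposal supplies exactly the justification they evidently intend. Your bookkeeping is correct: a vertex in the interior of a shared $\mathscr{I}$-edge lies in exactly the two adjacent blocks, so its $\mathscr{T}$-valence is the sum of the two boundary-code entries that get identified ($t \leftrightarrow k-t$ for a $+/+$ meeting, $t \leftrightarrow t$ for $+/-$), and Parts 4 and 5 then follow from the facts that every vertex of a tiling by convex polygons has valence at least $3$ and that each block can contribute at most $i$ incident pentagons.
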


Because any vertex of the boundary of $\mathscr{B}$ must be matched with at least one other vertex on an adjacent copy of $\mathscr{B}$, the Matching Lemma implies the following result, which can be used to eliminate possible topological types for a given $(i,n)$-block species.
	
\begin{lemma} Let $\mathscr{B}$ be of $(i, m_1 + m_2 + \cdots + m_k)$-block species type $\left<\alpha_1^{m_1}. \alpha_2^{m_2}. \, \ldots \, .\alpha_k^{m_k}\right>$.

	\begin{enumerate}
	\item If the boundary of $\mathscr{B}$ contains a vertex $v_{i}$ with $V_{\mathscr{T}}(v_{i}) = \alpha_p > 2i$ and $m_p = 1$, then the topological type of $\mathscr{I}$ must contain the number $\alpha_p$. \label{solo}
	\item If the boundary of $\mathscr{B}$ contains vertices $v_{i} \neq v_{j}$ with $\alpha_p = V_{\mathscr{T}}(v_{i}) = V_{\mathscr{T}}(v_{j}) > 2i$ and $m_p = 2$, then the topological type of $\mathscr{I}$ must contain the number $\alpha_p$ twice. \label{nonfused}
	\item If the boundary of $\mathscr{B}$ contains a vertex $v_{i}$ that is incident with 2 pentagons of $\mathscr{B}$, $\alpha_p = V_{\mathscr{T}}(v_{i}) > 2i$,  and $m_p = 2$, then the topological type of $\mathscr{I}$ must contain the number $\alpha_{p}/2$.\label{fused}
	
	\end{enumerate}
	 \label{lem:highisolated} \end{lemma}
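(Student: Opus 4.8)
The plan is to exploit a single structural observation that covers all three parts: a node of $\mathscr{B}$ whose valence in $\mathscr{T}$ exceeds $2i$ cannot lie in the interior of an edge of $\mathscr{I}$, so it must be an honest vertex of $\mathscr{I}$. This is exactly the contrapositive of Part \ref{bigint} of the Matching Lemma (Lemma \ref{lem:matching}), which caps the $\mathscr{T}$-valence of interior-edge vertices at $2i$. In each of the three parts the relevant node $v_i$ satisfies $\alpha_p = V_{\mathscr{T}}(v_i) > 2i$, so I would begin by recording that $v_i$ (and in Part \ref{nonfused} also $v_j$) is a vertex of $\mathscr{I}$.

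Next I would translate the multiplicity hypothesis on $m_p$ into a count $b$ of pentagons of $\mathscr{B}$ incident with the valence-$\alpha_p$ node(s). Because a node shared by $c$ pentagons of $\mathscr{B}$ is counted $c$ times in the species, $m_p = 1$ (Part \ref{solo}) forces a single geometric node incident with exactly one pentagon of $\mathscr{B}$; $m_p = 2$ with two distinct nodes (Part \ref{nonfused}) forces each to be incident with exactly one pentagon; and $m_p = 2$ together with a node incident with two pentagons (Part \ref{fused}) is the remaining possibility, with $b = 2$. In the first two parts $b = 1$ at each designated node, and in the third $b = 2$.

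The heart of the argument is a local count at the $\mathscr{I}$-vertex $w$ located at $v_i$. Since $S(\mathscr{I})$ is a subgroup of $S(\mathscr{T})$ and preserves $\mathscr{T}$-valence, for each copy $g(\mathscr{B})$ of the block incident with $w$ the preimage $g^{-1}(w)$ is a node of $\mathscr{B}$ of valence $\alpha_p$, hence (by the multiplicity analysis) one of the designated nodes, which is incident with $b$ pentagons. Thus every one of the $V_{\mathscr{I}}(w)$ copies of $\mathscr{B}$ meeting $w$ contributes exactly $b$ of the $\alpha_p$ pentagons surrounding $w$ in $\mathscr{T}$, giving $\alpha_p = V_{\mathscr{T}}(w) = b\cdot V_{\mathscr{I}}(w)$, so that $V_{\mathscr{I}}(w) = \alpha_p/b$. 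Because $w$ is a vertex of $\mathscr{I}$, the value $\alpha_p/b$ appears in the topological type of $\mathscr{I}$; this yields $\alpha_p$ in Parts \ref{solo} and \ref{nonfused} (where $b=1$) and $\alpha_p/2$ in Part \ref{fused} (where $b=2$). For Part \ref{nonfused} I would then observe that $v_i$ and $v_j$ are distinct points on the boundary of the single prototile $\mathscr{B}$, hence distinct vertices of $\mathscr{I}$ each of valence $\alpha_p$, so $\alpha_p$ occurs twice among the valences read off around one tile, which is precisely what the topological type records.

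The step I expect to require the most care is the uniformity of the local count, i.e. confirming that every copy of $\mathscr{B}$ around $w$ contributes the same number $b$ of pentagons. In Parts \ref{solo} and \ref{fused} this is immediate from the uniqueness of the valence-$\alpha_p$ node, but in Part \ref{nonfused} one must note that although a copy may meet $w$ through either $v_i$ or $v_j$, both contribute exactly one pentagon, so the product $b\cdot V_{\mathscr{I}}(w)$ is unaffected. I would also take care to verify that $w$ is a node of each incident copy (guaranteed since $w$, being a vertex of $\mathscr{I}$, lies on the boundary of every tile meeting it) and that no single copy meets $w$ through two distinct nodes, so that the identity $\alpha_p = b\cdot V_{\mathscr{I}}(w)$ is exact.
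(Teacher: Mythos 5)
Your proposal is correct and follows the same route the paper intends: the paper offers no written proof beyond the remark that the result follows from the Matching Lemma, and your argument is exactly the contrapositive of its Part \ref{bigint} (forcing the high-valence node to be a vertex of $\mathscr{I}$) combined with the count $\alpha_p = b\cdot V_{\mathscr{I}}(w)$ that the authors leave implicit. The only point worth recording explicitly is the standing convention that copies of $\mathscr{B}$ are congruent as decorated patches, so corresponding nodes in different copies have equal $\mathscr{T}$-valence and equal pentagon-incidence count; with that, your uniform contribution $b$ per incident copy is justified.
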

	  
Lemmas \ref{lem:maxval}, \ref{lem:matching}, and \ref{lem:highisolated} can be used to eliminate several topological types for a given $(i,n)$-block species.    For example, for the $(1,5)$-block species $\left<3^3.4^2\right>$, Lemma \ref{lem:maxval} Part \ref{lem:maxval-general} says possible topological types for $\mathscr{I}$ contain at most three 3s, at most two 4s, and no other numbers. This leaves only $[3^3.4^2]$ and $[3^2.4.3.4]$. For the $(1,5)$-block species $\left<3^4.6\right>$, the only possible topological type for $\mathscr{I}$ is $[3^4.6]$. 

In a similar way, we can eliminate possible topological types corresponding to larger values of $i$. Consider the $(3,15)$-block species $\left<3^{13}.8.24\right>$. The vertex of valence 24 very much restricts the possible topological types for $\mathscr{I}$; since  $24>6 \cdot 3$, Lemma \ref{lem:maxval} says that no vertex of $\mathscr{T}$ can have valence 24 and simultaneously be a vertex of $\mathscr{I}$ unless the topological type of $\mathscr{I}$ has a vertex of valence 8 or greater. Further, since $24 > 2 \cdot 3$, Lemma \ref{lem:matching} Part \ref{bigint} guarantees that no vertex in $\mathscr{T}$ but not in $\mathscr{I}$ can have valence 24. Thus, $\mathscr{I}$ cannot be of topological types $[3^6]$, $[3^3.4^2]$, $[3^2.4.3.4]$, $[4^4]$, $[3^4.6]$, $[3.6.3.6]$, $[6^3]$,  or $[3.4.6.4]$. Thus, in any $3$-block transitive tiling of species type $\left<3^{13}.8.24\right>$, the only possible topological types are $[4.8^2]$ and $[3.12^2]$. But, using Lemma \ref{lem:highisolated} Part \ref{solo}, we can eliminate both of these two remaining topological types since neither of these topological types contains 24. 

As another example, consider the $(4,20)$-block species $\left<3^{17}.5.15^2\right>$. Since $15>2 \cdot 4$, Lemma \ref{lem:highisolated}  Parts \ref{nonfused} and \ref{fused} implies that the permissible topological types for $\mathscr{I}$ must contain 15 twice or 15/2. Notice that there are no topological types satisfying these conditions.  

We provide one last lemma that relates partitions of the boundary of an $i$-block to corresponding possible topological types for the $i$-block.

\begin{lemma}  Let $\Delta = \#1's - \#\text{non-}1's$ in $\partial(\mathscr{B})$.
\begin{enumerate}
\item If $\Delta > 6$, $\mathscr{B}$ does not admit a tile-transitive tiling of the plane.
\item If $\Delta = 6$, $\mathscr{B}$ admits only tile-transitive tilings of topological type $[3^6]$, and every marked element of $\overline{\partial}(\mathscr{B})$ is a 1.
\item If $\Delta = 5$, $\mathscr{B}$ admits only tile-transitive tilings of hexagonal or pentagonal topological types, and every marked element of $\overline{\partial}(\mathscr{B})$ is a 1.
\item If $\Delta= 4$, $\mathscr{B}$ admits only tile-transitive tilings of hexagonal, pentagonal, or quadrilateral topological types. For pentagonal and quadrilateral types, every marked element of $\overline{\partial}(\mathscr{B})$ is a 1, and for hexagonal types, five 1's of $\overline{\partial}(\mathscr{B})$ must be marked.
\end{enumerate} \label{lem:1-imbal} \end{lemma}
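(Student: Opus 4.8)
The plan is to reduce all four parts to the single inequality $\Delta \le C$, where $C$ denotes the number of corners of $\mathscr{B}$ as a tile of $\mathscr{I}$ (equivalently, the length of the topological type symbol), and then to extract the extra marking information from the equality and near-equality cases. First I would record the correspondence between $C$ and the topological classes: among the isohedral topological types, $C=6$ occurs only for $[3^6]$ (hexagonal), $C=5$ for the pentagonal types $[3^4.6]$, $[3^3.4^2]$, $[3^2.4.3.4]$, $C=4$ for the quadrilateral types $[4^4]$, $[3.6.3.6]$, $[3.4.6.4]$, and $C=3$ for the triangular types. Since every isohedral prototile is a topological $k$-gon with $3 \le k \le 6$, we always have $C \le 6$. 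The $C$ corners are exactly the marked entries of $\overline{\partial}(\mathscr{B})$ (consecutive edges share endpoints, so the number of marked vertices around the loop is precisely $C$), while the unmarked entries are the vertices of $\mathscr{T}$ lying in the interiors of the edges of $\mathscr{I}$.

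Next I would set up the bookkeeping. Split the $n$ boundary vertices into the $C$ marked ones and the $n-C$ unmarked ones, and write $c_1, c_{\ne}$ for the numbers of $1$'s and non-$1$'s among the marked vertices and $u_1, u_{\ne}$ for the corresponding counts among the unmarked vertices, so that $c_1 + c_{\ne} = C$ and $\Delta = (c_1 - c_{\ne}) + (u_1 - u_{\ne})$. The heart of the argument is the bound $u_1 \le u_{\ne}$. Because $\mathscr{I}$ is isohedral, the incidence symbol pairs each edge of $\mathscr{B}$ with a partner edge, and this identification descends to an involution $\iota$ on the set of unmarked vertices under which each interior-edge vertex is glued to its image. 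By Lemma~\ref{lem:matching}, Part~\ref{1s-no-match} ($1$'s cannot meet $1$'s), a vertex with $b_i = 1$ can never be glued to another vertex of value $1$, and in particular cannot be a fixed point of $\iota$; hence every unmarked $1$ lies in a $2$-cycle with an unmarked non-$1$. Sending each unmarked $1$ to its $\iota$-partner is then an injection into the unmarked non-$1$'s, giving $u_1 \le u_{\ne}$.

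Combining these facts yields $\Delta = (c_1 - c_{\ne}) + (u_1 - u_{\ne}) \le c_1 - c_{\ne} \le c_1 \le C \le 6$, which proves Part 1 in contrapositive form. For the remaining parts I would run the elementary arithmetic forced by $\Delta \le C \le 6$, $c_1 + c_{\ne} = C$, $u_1 - u_{\ne} \le 0$, together with integrality and the parity relation $c_1 - c_{\ne} \equiv C \pmod 2$. When $\Delta = 6$ every inequality is tight, forcing $C = 6$ (hence $[3^6]$) and $c_{\ne} = 0$. When $\Delta = 5$ one gets $C \ge 5$ (hexagonal or pentagonal), and checking $C = 5, 6$ against parity forces $c_{\ne} = 0$ in each, so all marked entries are $1$'s. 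When $\Delta = 4$ one gets $C \ge 4$; for $C = 4, 5$ parity again forces $c_{\ne} = 0$, while for $C = 6$ the constraint $c_1 - c_{\ne} \ge 4$ with $c_1 + c_{\ne} = 6$ forces $c_1 \ge 5$, i.e.\ at least five $1$'s must be marked.

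I expect the main obstacle to be the step in which the involution $\iota$ on the unmarked vertices is justified rigorously: one must argue from the incidence symbol that interior-edge vertices genuinely pair up on $\mathscr{B}$ itself (not merely across distinct copies), handle edges that are matched to themselves by a reflection so that a fixed point of $\iota$ would force a $1$ to meet a $1$, and confirm that the Matching Lemma applies to every such gluing regardless of the $+/-$ orientation of the matched edges. Once the pairing and the injection $u_1 \hookrightarrow u_{\ne}$ are established, the inequality $\Delta \le C$ and all of the subsequent case analysis are routine counting.
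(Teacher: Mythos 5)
Your argument is correct, and in fact the paper states Lemma~\ref{lem:1-imbal} with no proof at all, so there is nothing to compare it against; your write-up supplies the missing justification. The reduction to $\Delta \le C$ (with $C$ the number of corners of $\mathscr{B}$ as a prototile of $\mathscr{I}$, so $3 \le C \le 6$ and $C$ determines the hexagonal/pentagonal/quadrilateral trichotomy) is the natural formalization of what the authors evidently had in mind, since the only nontrivial input, $u_1 \le u_{\ne}$, is exactly Part~\ref{1s-no-match} of the Matching Lemma applied to the interior-of-edge vertices. Your own flag about the involution $\iota$ is the right place to be careful, and it does go through: the adjacency part of an incidence symbol is itself an involution on signed edge labels, so the gluing of each boundary edge of $\mathscr{B}$ to the corresponding edge of the (congruent) neighboring copy induces a well-defined pairing of the unmarked positions of $\overline{\partial}(\mathscr{B})$; a fixed point can only occur on an edge matched to itself, where the Matching Lemma forces $2b = V_{\mathscr{T}}(\beta) \ge 3$, hence $b \ge 2$, so no unmarked $1$ is fixed and each is injectively paired with an unmarked non-$1$. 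The remaining case analysis via $c_1 + c_{\ne} = C$ and the parity $c_1 - c_{\ne} \equiv C \pmod 2$ is routine and yields exactly the four claims, including the correct reading of Part~4 that \emph{at least} five marked entries are $1$'s in the hexagonal case (either $c_1 = 5,\ c_{\ne} = 1$ or $c_1 = 6,\ c_{\ne} = 0$ can occur).
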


Lemma \ref{lem:1-imbal} is useful in a few ways. First, for a particular generalized $(i,n)$-block, we may (at a glance) eliminate certain possible topological types from consideration. Secondly, this lemma drastically limits the number of ways that $\partial(\mathscr{B})$ can be partitioned.

In Table \ref{tab:Dio-solutions}, we have organized the $(i,n)$-block species and the corresponding possible topological types for $i \leq 3$.

\begin{table}[H]\centering\tiny
\begin{tabular}{|l|l|r|}\hline
$(i,n)$ & $(i,n)$-block species & possible topological types for $\mathscr{I}$  \\ \hline\hline

$(1,5)$ & $\left<3^3.4^2\right>$ & $[3^3.4^2],[3^2.4.3.4]$  \\
       & $\left<3^4.6\right>$ & $[3^4.6]$     \\\hline

  $(1,6)$ & $\left<3^6\right>$ & $[3^6]$    \\\cline{1-3}

$(2,10)$ & $\left<3^8.4.12\right>$ & -      \\
      & $\left<3^8.6^2\right>$ & $[3^6],[3^4.6],[3.6.3.6]$     \\
      & $\left<3^7.4^2.6\right>$ & $[3^4.6]$     \\
      & $\left<3^6.4^4\right>$& $[3^6],[3^3.4^2],[3^2.4.3.4],[4^4]$    \\\cline{1-3}

$(2,11)$ & $\left<3^{10}.6\right>$& $[3^4.6]$     \\
& $\left<3^9.4^2\right>$& $[3^6],[3^3.4^2],[3^2.4.3.4]$     \\\cline{1-3}

$(2,12)$ &$\left<3^{12}\right>$ & $[3^6]$    \\\cline{1-3}

$(3,15)$ & $\left<3^{13}.7.42\right>$, $\left<3^{13}.8.24\right>$, $\left<3^{13}.9.18\right>$, $\left<3^{13}.10.15\right>$ & -    \\
& $\left<3^{12}.4.5.20\right>$, $\left<3^{12}.5^2.10\right>$, $\left<3^{11}.4^3.12\right>$ & -     \\
& $\left<3^{13}.12^2\right>$ &  $[3^4.6],[3.12^2]$     \\
& $\left<3^{12}.4.6.12\right>$ & $[4.6.12]$    \\
& $\left<3^{12}.4.8^2\right>$ & $[3^3.4^2],[3^2.4.3.4],[4.8^2]$    \\
& $\left<3^{12}.6^3\right>$ & $[3^6],[3^3.4^2],[3^2.4.3.4],[3^4.6],[3.6.3.6],[6^3]$     \\
& $\left<3^{11}.4^2.6^2\right>$ & $[3^6],[3^3.4^2],[3^2.4.3.4],[4^4],[3^4.6],[3.6.3.6],[3.4.6.4]$    \\
& $\left<3^{10}.4^4.6\right>$ & $[3^6],[3^3.4^2],[3^2.4.3.4],[4^4],[3^4.6]$     \\
& $\left<3^{9}.4^6\right>$ & $[3^6],[3^3.4^2],[3^2.4.3.4],[4^4]$    \\\cline{1-3}

$(3,16)$ & $\left<3^{14}.4.12\right>$ & -    \\
& $\left<3^{14}.6^2\right>$ & $[3^6],[3^3.4^2],[3^2.4.3.4],[3^4.6],[3.6.3.6]$     \\
& $\left<3^{13}.4^2.6\right>$ & $[3^6],[3^3.4^2],[3^2.4.3.4],[3^4.6], [3.4.6.4]$     \\
& $\left<3^{12}.4^4\right>$ & $[3^6],[3^3.4^2],[3^2.4.3.4],[4^4]$     \\\cline{1-3}

$(3,17)$ & $\left<3^{16}.6\right>$ & $[3^6],[3^4.6]$   \\
& $\left<3^{15}.4^2\right>$ & $[3^6],[3^3.4^2],[3^2.4.3.4]$   \\\cline{1-3}

  $(3,18)$ & $\left<3^{18}\right>$ & $[3^6]$     \\ \hline

\end{tabular}\caption{All $(i,n)$-block species for $1 \leq i \leq 3$}\label{tab:Dio-solutions}\end{table}

\section{An algorithm for enumerating all pentagons admiting $i$-block transitive tilings.}

For fixed $i$, the following procedure will determine all possible systems of equations corresponding to $i$-block transitive tilings.
\begin{enumerate}
\item Enumerate all topological $i$-block forms with $n$ nodes (subject to the restriction that $5i \leq n \leq 6i$ from Inequality \ref{eqn:n-bounds}. This part of the procedure was done by hand for $i = 1, 2, 3$, and $4$.
\item For each topological $i$-block with assigned flat nodes, generate every possible labeling of the consituent pentagons' angles and sides with $A, \ldots, E$ and $a, \ldots, e$.
\item In every way possible, assign the value of $\pi$ to nodes in the pentagons of the $i$-block form having more than 5 nodes, leaving each pentagon with exactly 5 unlabeled nodes.
\item For each $i$-block form, generate every partition of the boundary into 3, 4, 5, or 6 consecutive arcs.
\item For each such bounary partition, determine all compatible isohedral types.
\item For each fully-labeled topological $i$-block, apply the adjacency symbol of each compatible isohedral type to the partition in every way possible.
\item For each application of the adjacency symbol, generate the corresponding set of linear equations relating the sides and angles of the pentagons of the $i$-block and determine if this system of equations is consistent. Any inconsistent linear systems are discarded.
\item For each consistent system, determine whether or not the resulting system of equations implies that the pentagon is of a previously known type.
\item For any system of equations not identified as a previously observed type, determine if a pentagon satisfying these equations is geometrically realizable. That is, determine whether or not such a pentagon can additionally satisfy the system of equations consponding to a 0 vector sum for the sides under the constraint of positive side lengths and angle measure strictly between 0 and $\pi$. 
\end{enumerate}

We will illustrate process for a sample $2$-block and, separately, a sample $3$-block. While all of our results for enumerating pentagons admitting 1-, 2-, and 3-block transitive were determined by a single automated system (except parts corresponding to steps 1 and 9 above), as a double-check on our automated algorithm, we separately enumerated the pentagons admitting 1-block transitive tilings completely by hand (\ref{app:1-blocks}), and we separately enumerated the pentagons admitting 2-block transtive tilings partially by hand and partially using Mathematica code to automate the label applications and the linear system solving.

\subsection{Illustrating the algorithm with a 2-block example}

To facilitate discussion of 2-blocks in general, we will use regular shapes to represent the pentagons comprising the 2-blocks, even though in any actual 2-block, the two (congruent) pentagons are irregular. In representing 2-blocks in a generalized way makes spotting flat nodes visually apparent, we will represent pentagons having 5 nodes as regular pentagons, pentagons having 6 nodes (1 flat node) will be represented by regular hexagons, and pentagons having 7 nodes (2 flat nodes) will be represented by regular heptagons. By Inequality \ref{eqn:n-bounds} the number of nodes $n$ (counted with multiplicity) in a 2-block satisfies $10 \leq n \leq 12$, so there are 4 ways to represent 2-blocks in terms of the numbers of nodes; these are depicted in Figure \ref{fig:2-blocks}. In these topological 2-blocks forms, it is important to note that in any hexagon, one of the corners must represent a flat node, and in any heptagon, 2 of the corners must represent flat nodes.

\begin{figure}[H]
\centering
\begin{subfigure}[H]{.24\textwidth} 
\centering
\includegraphics[scale=.2]{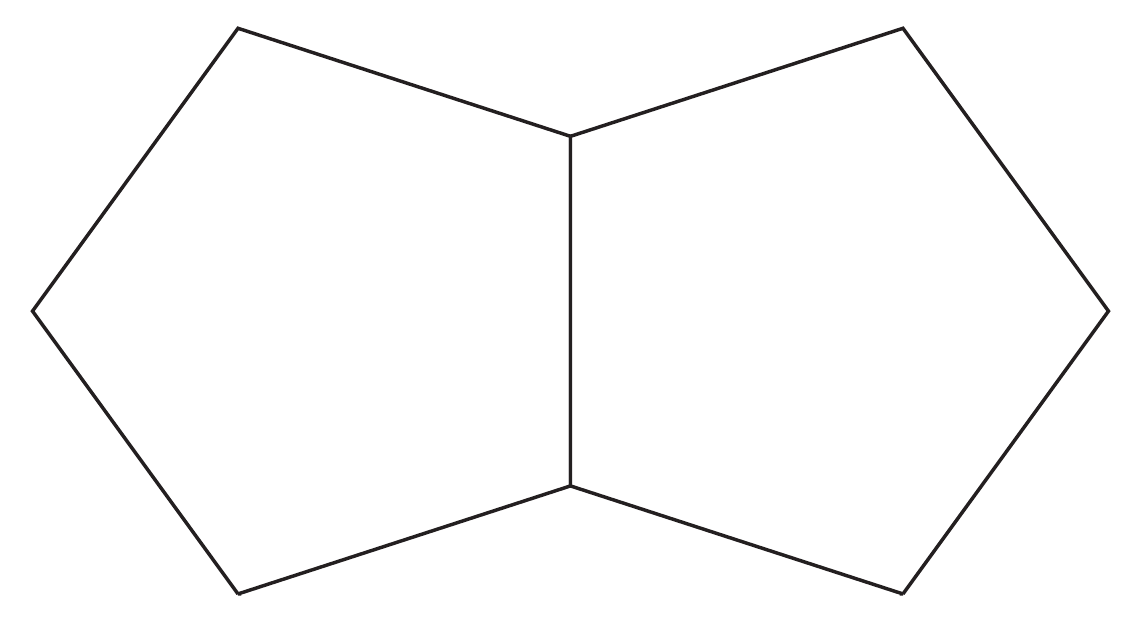}  
\caption{$n = 10$\\$\partial(\mathscr{B}) = 21112111$}\label{fig:pent-pent}
\end{subfigure}
\begin{subfigure}[H]{.24\textwidth}
\centering
\includegraphics[scale=.2]{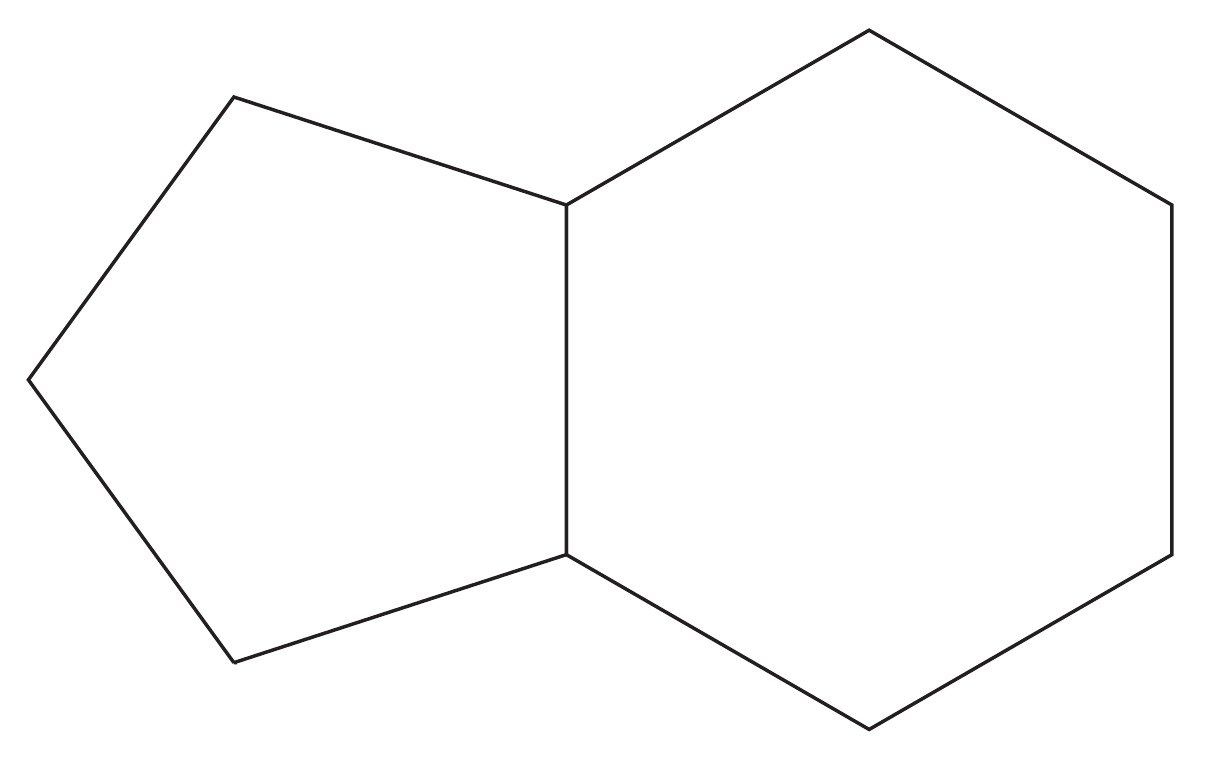}  
\caption{$n = 11$\\$\partial(\mathscr{B}) = 211121111$}\label{fig:pent-hex}
\end{subfigure}
\begin{subfigure}[H]{.24\textwidth}
\centering
\includegraphics[scale=.2]{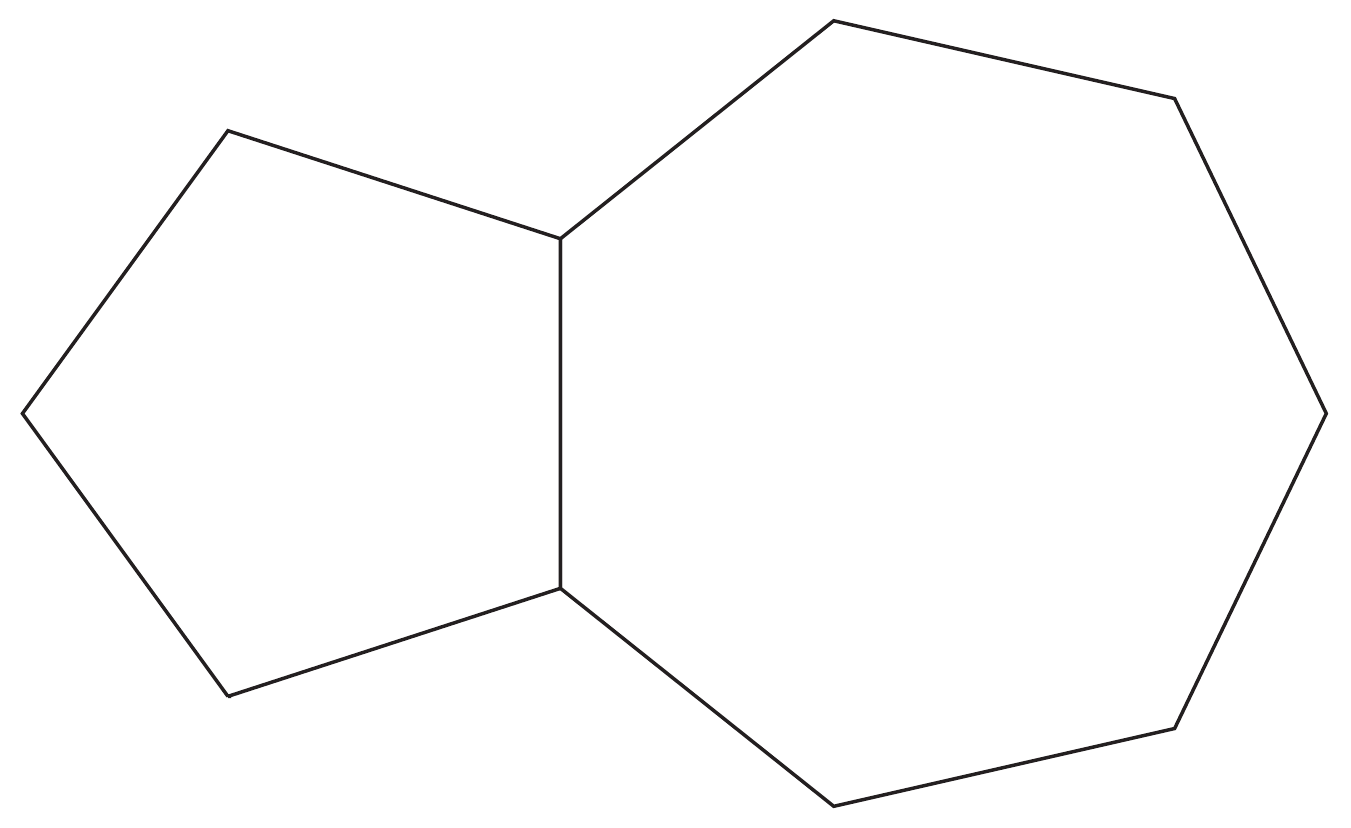}  
\caption{$n = 12$\\$\partial(\mathscr{B}) = 2111211111$}\label{fig:pent-hept}
\end{subfigure}
\begin{subfigure}[H]{.24\textwidth}
\centering
\includegraphics[scale=.2]{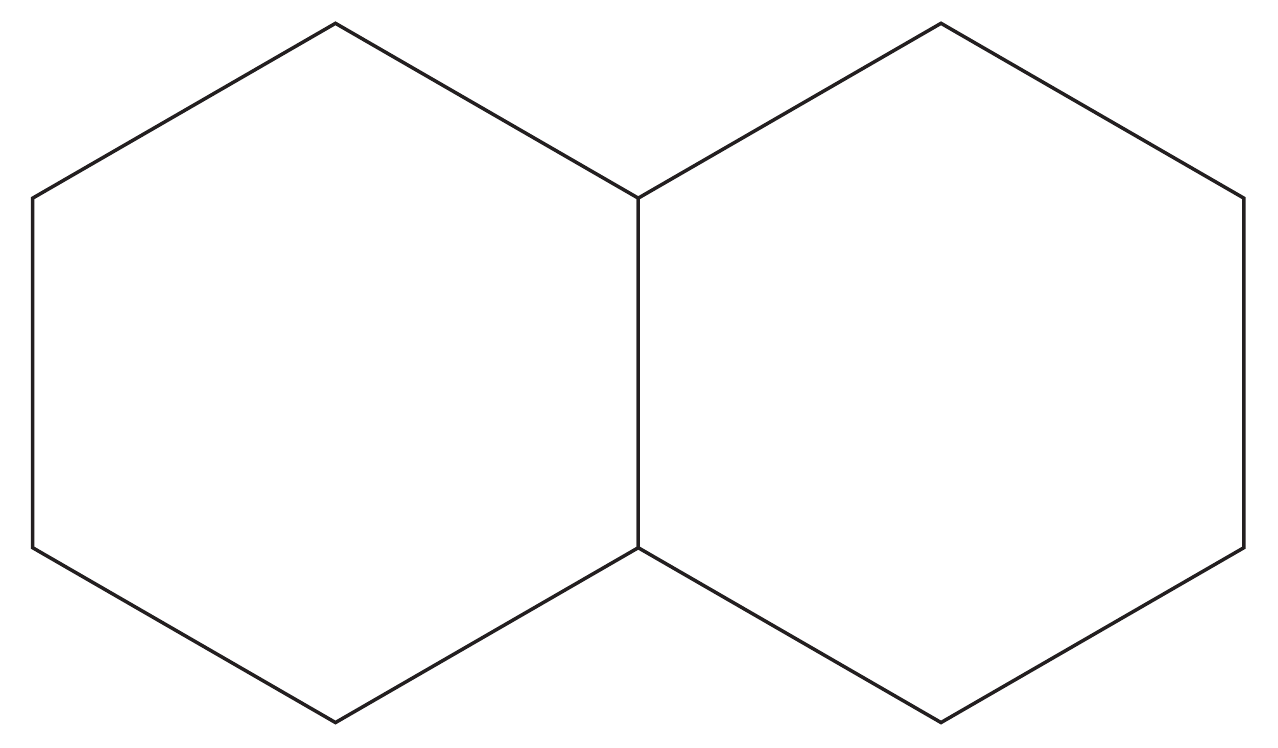} 
\caption{$n = 12$\\$\partial(\mathscr{B}) = 2111121111$} \label{fig:hex-hex}
\end{subfigure}
\caption{All possible topological 2-blocks forms}\label{fig:2-blocks}
\end{figure}

Now, to illustrate the algorithm outlined above, for step 1, let us pick the topological $i$-block form above represented in Figure \ref{fig:pent-hept}. For steps 2 and 3, without loss of generality, label the vertices of the left pentagon of Figure \ref{fig:IH6-2-block-unlabeled} with $A$, $B$, $C$, $D$, and $E$. The right pentagon, however, may be in several different orientations with respect to the choice of labeling of the first pentagon. We choose variable labels $T$, $U$, $V$, $W$, $X$, $Y$, and $Z$ for the nodes of this second pentagon (Figure \ref{fig:IH6-2-block-unlabeled}). 

\begin{figure}[H]\centering \includegraphics[scale=.7]{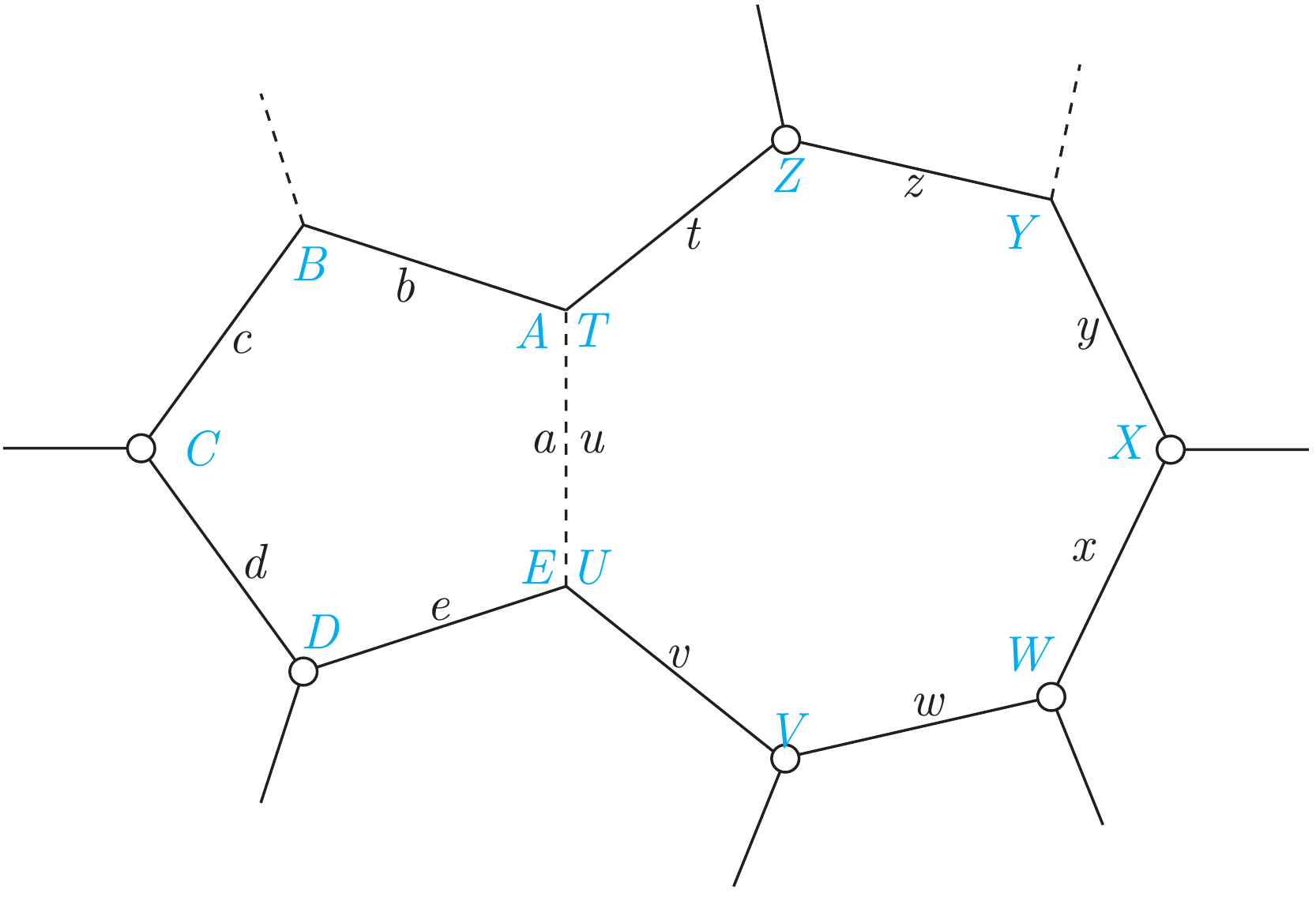}  \caption{Partially labeled 2-block} \label{fig:IH6-2-block-unlabeled}\end{figure} 

\noindent These variables may assume the values $A$, $B$, $C$, $D$, $E$, or $\pi$ (two of the nodes on the heptagon is a flat node). For example, the substitution $T=A$, $U=B$, $V = C$, $W = D$, $X = E$, $Y = \pi$, and $Z = \pi$ yields the labeling of nodes in Figure \ref{fig:IH6-2-block-sub}. 

For step 4, notice that the boundary code for the 2-block in this case is $2111211111$, for which $\Delta$ from Lemma \ref{lem:1-imbal} is $\Delta = 6$. By Lemma \ref{lem:1-imbal}, such a 2-block can admit isohedral tilings of hexagonal types only, and every marked element of $\overline{\partial}(\mathscr{B})$ must be a 1. Note also that two consecutive 1's cannot occur in the interior of a boundary edge of $\mathscr{B}$. After using Lemma \ref{lem:1-imbal} and our previous observation to filter out unusable boundary partitions, we are left with the boundary partitions in Table \ref{tab:2,12-pent-hept-blocks}, completing step 4 of the algorithm.

\begin{table}[H]\centering
\begin{tabular}{|l|l|l|}\hline\rule{0pt}{3ex} 

$2\bar{1}\bar{1}\bar{1}2\bar{1}\bar{1}1\bar{1}1$ & $2\bar{1}1\bar{1}2\bar{1}\bar{1}1\bar{1}\bar{1}$  &   $2\bar{1}1\bar{1}2\bar{1}1\bar{1}\bar{1}\bar{1}$  \\

$2\bar{1}\bar{1}\bar{1}2\bar{1}1\bar{1}\bar{1}1$ &   $2\bar{1}\bar{1}12\bar{1}\bar{1}\bar{1}1\bar{1}$      &   $2\bar{1}1\bar{1}21\bar{1}\bar{1}\bar{1}\bar{1}$   \\

$2\bar{1}\bar{1}\bar{1}2\bar{1}1\bar{1}1\bar{1}$ & $2\bar{1}\bar{1}12\bar{1}\bar{1}1\bar{1}\bar{1}$        &   $21\bar{1}\bar{1}2\bar{1}\bar{1}\bar{1}\bar{1}1$   \\

$2\bar{1}\bar{1}\bar{1}21\bar{1}\bar{1}\bar{1}1$ & $2\bar{1}\bar{1}12\bar{1}1\bar{1}\bar{1}\bar{1}$        & $21\bar{1}\bar{1}2\bar{1}\bar{1}\bar{1}1\bar{1}$ \\

$2\bar{1}\bar{1}\bar{1}21\bar{1}\bar{1}1\bar{1}$ &  $2\bar{1}\bar{1}121\bar{1}\bar{1}\bar{1}\bar{1}$        & $21\bar{1}\bar{1}2\bar{1}\bar{1}1\bar{1}\bar{1}$ \\

$2\bar{1}\bar{1}\bar{1}21\bar{1}1\bar{1}\bar{1}$  & $2\bar{1}1\bar{1}2\bar{1}\bar{1}\bar{1}\bar{1}1$ & $21\bar{1}\bar{1}2\bar{1}1\bar{1}\bar{1}\bar{1}$  \\
 
$2\bar{1}\bar{1}12\bar{1}\bar{1}\bar{1}\bar{1}1$  &$2\bar{1}1\bar{1}2\bar{1}\bar{1}\bar{1}1\bar{1}$  & $21\bar{1}\bar{1}21\bar{1}\bar{1}\bar{1}\bar{1}$ \\

& & $21\bar{1}12\bar{1}\bar{1}\bar{1}\bar{1}\bar{1}$\\\hline

\end{tabular}\caption{boundary partitions for pentagon-heptagon $(2,12)$-blocks}\label{tab:2,12-pent-hept-blocks}\end{table}

For our example, let us pick the partitioned boundary code $\overline{\partial}(\mathscr{B}) = 21\bar{1}\bar{1}2\bar{1}\bar{1}\bar{1} 1 \bar{1}$. In Figure \ref{fig:IH6-2-block-unlabeled} we have indicated this partition by putting white dots on the nodes marking the end points of the partition edges. For step 5, we must determine which isohedral types are compatible with this partition. The compatible isohedral types are determined by comparing the edge lengths in $\overline{\partial}(\mathscr{B})$ to the edge transitivity classes required for the isohedral types, as well as by applying the Matching Lemma. In doing this, we find that the compatible isohedral types are IH4, IH5, and IH6. Each compatible isohedral type will in turn be checked, but to illustrate our method, let us suppose our blocks form an IH6 tiling. The adjacency symbol for IH6 is $[a^{+}b^{+}c^{+}d^{+}e^{+}f^{+};a^{+}e^{-}c^{+}f^{-}b^{-}d^{-}].$ For step 6, we apply the IH6 adjacency symbol in every possible way to this labeled 2-block, as indicated by the red arcs labeled with Greek characters in Figure \ref{fig:IH6-2-block-sub}. In this case, there is only one way to apply the adjacency symbol.

\begin{figure}[H]\centering \includegraphics[scale=.7]{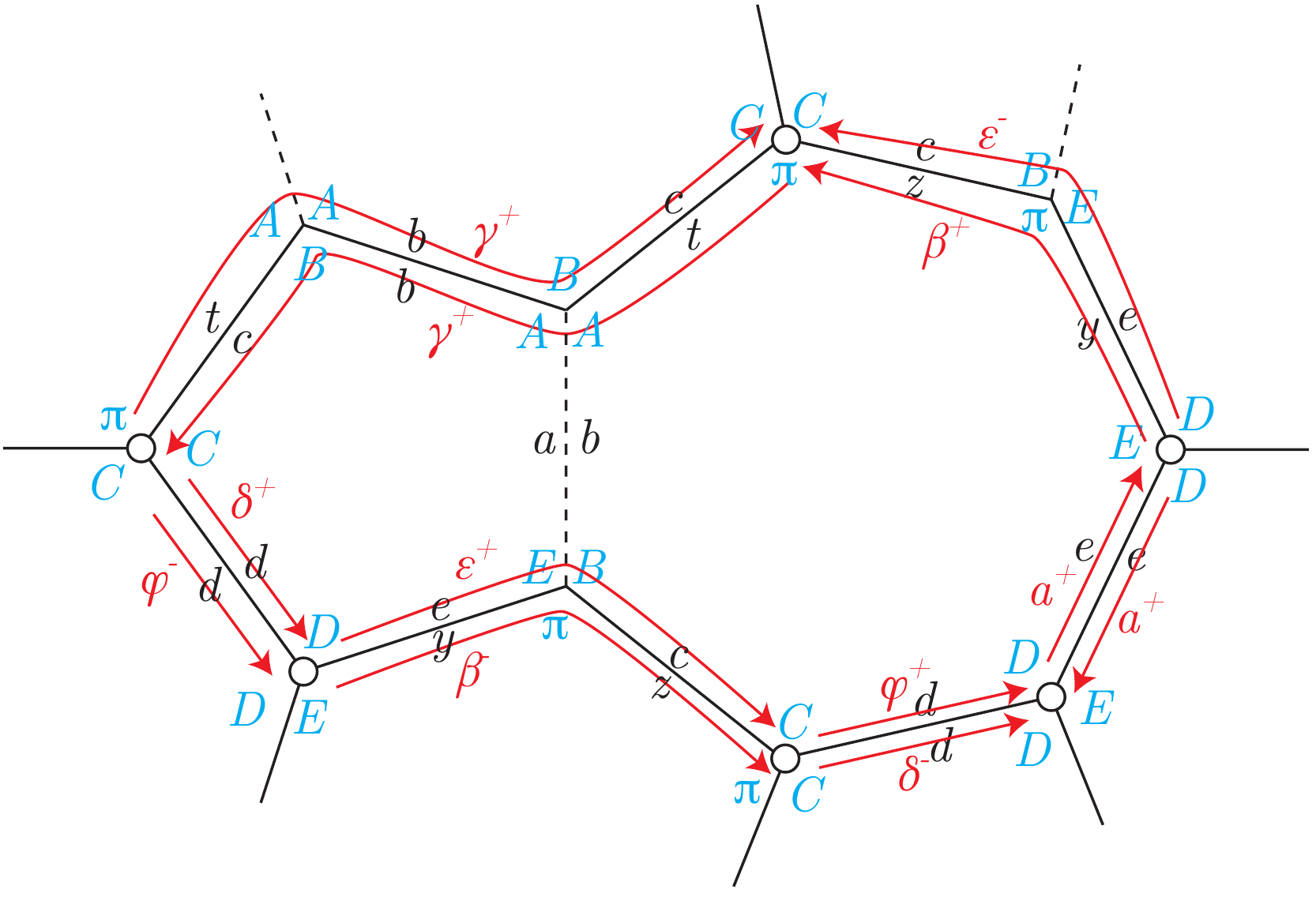}  \caption{IH6 labeling of a 2-block after substitutions} \label{fig:IH6-2-block-sub}\end{figure} 

\noindent For step 7, from Figure \ref{fig:IH6-2-block-sub}, the following system of equations are gleaned.
\begin{align*}
2A+B &= 2\pi\\
2C+ \pi &= 2\pi\\
2D + E &= 2\pi\\
E+B+\pi &=2\pi\\
a &= b \\
c+e &= y+z\\
2c+e &=t+y+z \\
a &=t+y+z\\\end{align*} Finally, for step 8, upon simplfying the equations and eliminating the variables $t, u, \ldots, z$, we arrive at the set of equations \begin{align*}
2A+B &=2\pi \\
C &= \pi/2\\
D &= 3\pi/2-A\\
a &= b\\
e &= a - 2c.\\\end{align*} Any pentagon admitting such a 2-block is then quickly identified as a Type 11 pentagon.

\subsection{Illustrating the algorithm with a 3-block example}

For step 1 of our algorithm for finding all convex pentagons admitting 3-block transitive tilings, we determine all of the possible topological 3-block forms. This part of the process was done by hand. In Figure \ref{fig:3-block-forms}, we show all possible topological 3-block forms (up to rotation and reflection).

\begin{figure}[H]\centering \includegraphics[scale=.7]{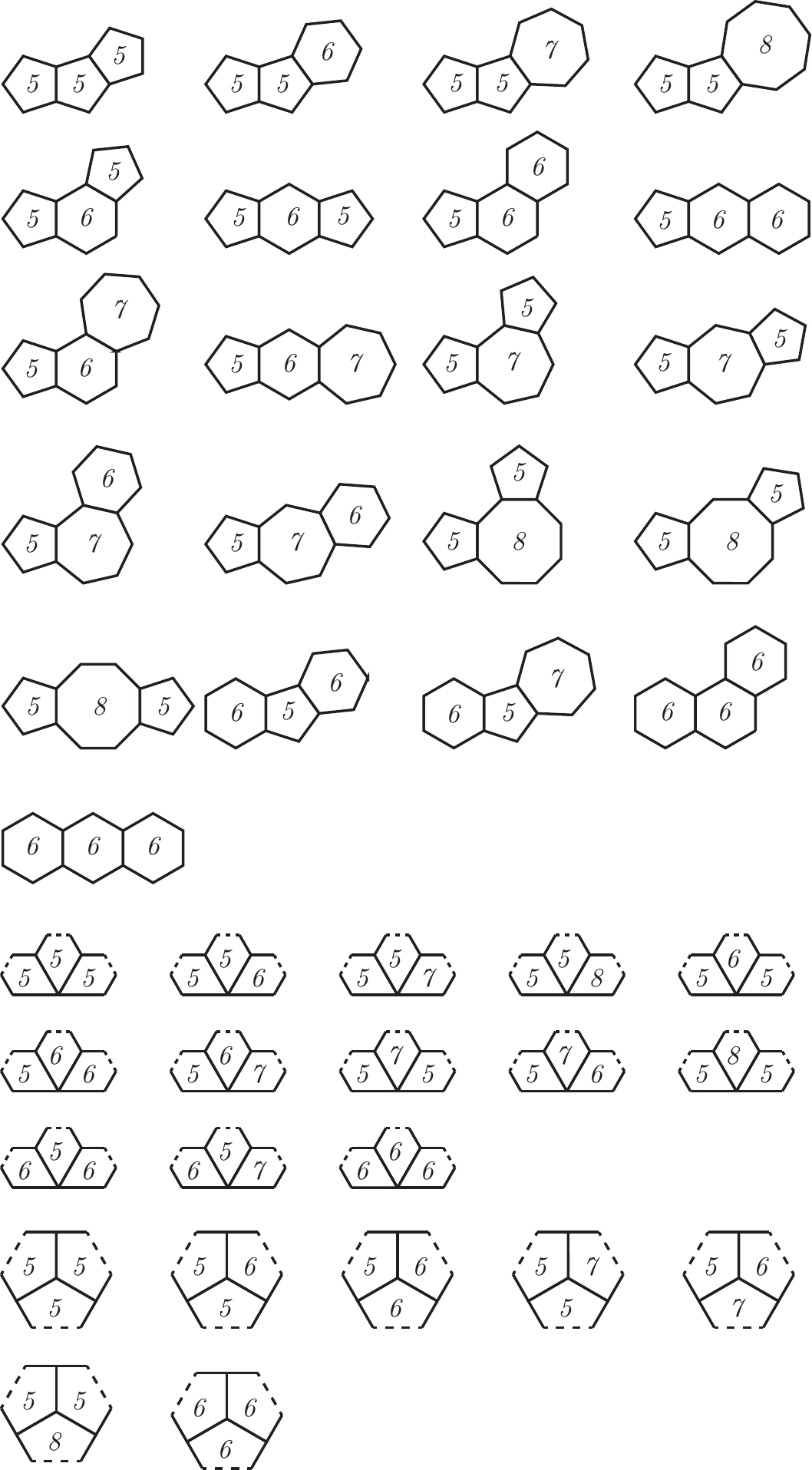}  \caption{All topological 3-block forms. The number labeling a polygon represents the number of nodes of that polygon.} \label{fig:3-block-forms}\end{figure} 

To illustrate subsequent steps of the algorithm, let us choose the 3-block form of Figure \ref{fig:556-rose-labeled} which has boundary code 1121121112 (starting at the top node and going counterclockwise). For steps 2 and 3, we must assign labels and $\pi$ nodes in every possible way to the nodes of this block. One such way of doing so is shown in Figure \ref{fig:556-rose-labeled}.

\begin{figure}[H]\centering \includegraphics[scale=1]{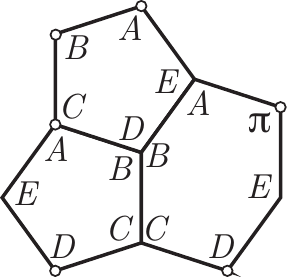}  \caption{A labeled 3-block form.} \label{fig:556-rose-labeled}\end{figure} 

For step 4, we must partition the boundary of this 3-block form, in every possible way, into 3, 4, 5, and 6 arcs. Since there are 10 sides on the boundary of this 3-block form, partitioning the boundary corresponds to finding all cyclically equivalent integer partitions of the integer 10 into 3, 4, 5, and 6 integers. For example, consider the integer partition $\left\{1,1,2,2,2,2\right\}$ of 10; this integer partition gives the number of sides per boundary edge in a partition of the boundary into 6 arcs. Applying this integer partition, we obtain the partitioned boundary code $\bar{1}\bar{1}\bar{2}1\bar{1}2\bar{1}1\bar{1}2$. In Figure \ref{fig:556-rose-labeled}, the vertices labeled with white dots indicate the endpoints of the edges forming this partition of the boundary that we will use to illustrate subsequent steps of the algorithm.

For step 5, we determine that the isohedral types compatible with this partition are IH2, IH5, IH7, IH15, and IH16. Performing step 6, we choose isohedral type IH5 and apply the adjacency symbol, $[a^{+}b^{+}c^{+}d^{+}e^{+}f^{+};a^{+}e^{+}d^{-}c^{-}b^{+}f^{+}]$, in every possible way. In this particular example, there is a unique way to apply the adjacency symbol (up to symmetry), as shown in Figure \ref{fig:556-rose-IH5}.

\begin{figure}[H]\centering \includegraphics[scale=1]{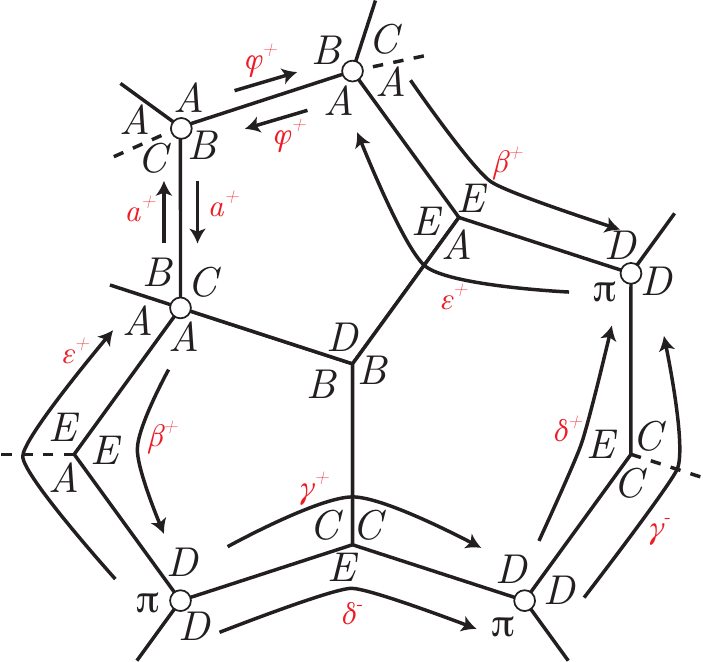}  \caption{A labeled IH5 3-block.} \label{fig:556-rose-IH5}\end{figure} 

For step 7, we simply read off the equations for the angles and sides from Figure \ref{fig:556-rose-IH5} to get the following system of equations.

\begin{align*} 2A+B+C &= 2p\\
2E+A &=2\pi\\
2D+\pi &= 2\pi\\
2C+E &= 2\pi\\
2B+D &= 2\pi\\
e = b &= d\\
a &= e+d\\\end{align*}

Upon simplifying this system, we obtain \begin{align*}
A &= \pi/3\\
B &= 3\pi/4\\
C &= 7\pi/12\\
D &= \pi/2\\
E &=5\pi/6\\
a &= 2b = 2d = 2e\\\end{align*}

For step 8, upon comparing this system to the previously known 14 types and any sets of equations we have previously identified as impossible, we do not find a match. This leads us to step 9: We must determine if this set of equations can be realized by a convex pentagon, and if new information is learned about the side and angle relations in the process, we must check if this new information yields a known type of pentagon. To test if these equations can be realized by a pentagon, we view the edges of a hypothetical pentagon satisyfing these equations as vectors and require that the sum of these vectors be 0. This results in a system of two equations: \footnotesize \begin{align} a - b \cos A + c \cos(A+B) - d \cos(A+B+C) + e \cos(A+B+C+D) &=0 \label{eqn:cos}\\ 
 b \sin A - c \sin(A+B)+ d \sin(A+B+C) - e \sin(A+B+C+D) &=0\label{eqn:sin}\end{align}\normalsize Upon setting $a = 1$ (we may set the scale factor of the pentagon as we like) and substituting the known angles and sides into Equations \ref{eqn:cos} and \ref{eqn:sin}, we find that \[c = \frac{1}{\sqrt{2}(\sqrt{3} - 1)}\] satisfies both equations.
Upon inspection, we see that this pentagon still does not match a known type. Thus, the pentagon with these side lengths and angles measures is a new type of pentagon (Type 15). This tile and a corresponding 3-block-tiling by this tile are shwon in Figure \ref{fig:type-15}.

\begin{figure}[H]\centering
\begin{subfigure}[H]{.35\textwidth} 
\centering
\includegraphics[scale=.6]{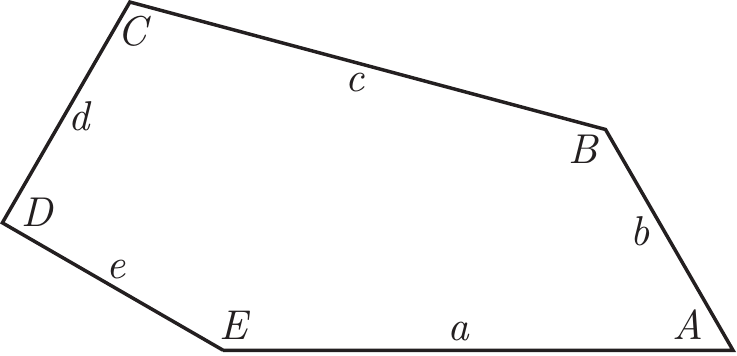}  
\caption{\scriptsize \begin{tabular}{ll} & \\
$A = 60^{\circ}$ & \hspace{.3in} $a = 1$ \\
$B = 135^{\circ}$ & \hspace{.3in} $b = 1/2$ \\
$C = 105^{\circ}$ & \hspace{.3in} $\displaystyle c = \frac{1}{\sqrt{2}(\sqrt{3}-1)}$\\ 
$D = 90^{\circ}$ & \hspace{.3in} $d = 1/2$\\ 
$E = 150^{\circ}$ &\hspace{.3in} $e = 1/2$\end{tabular}}\label{fig:15}
\end{subfigure}
\begin{subfigure}[H]{.6\textwidth}
\centering
\includegraphics[scale=.5]{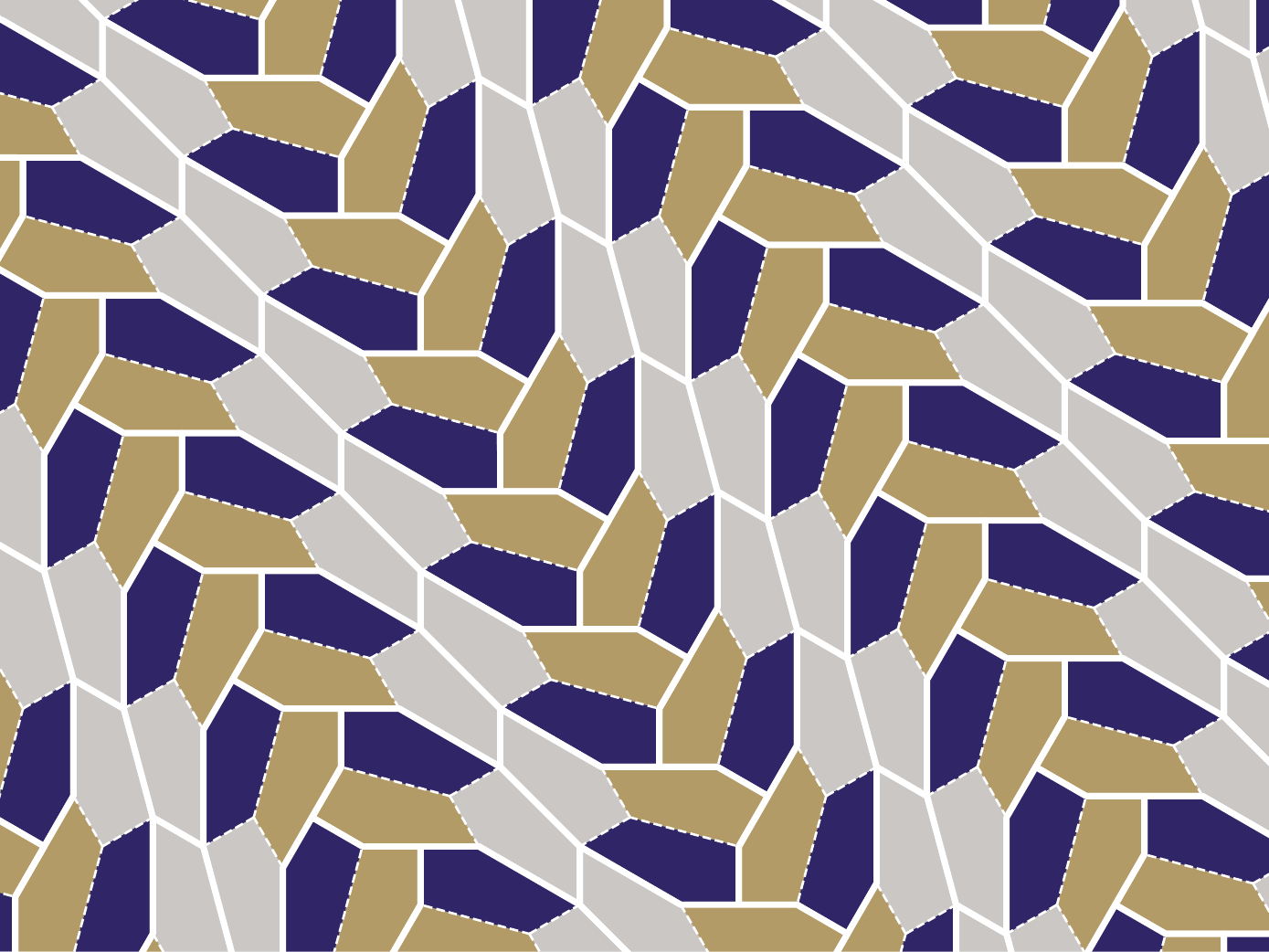}  
\caption{\scriptsize \mbox{}\\A 3-block transitive tiling by the Type 15 pentagon. The thick white lines outline the 3-block, and the colors of the tiles indicate the transitivity classes of pentagons.}\label{fig:tiling}
\end{subfigure}\caption{The Type 15 pentagon}\label{fig:type-15}
\end{figure}

\subsection{Untyped Solutions}\label{subsec:impossible}
Our computer code generated several sets of equations whose solutions did not automatically fall into Types 1-14 and also could not immediately be dismissed as impossible. Initially, these solutions were of extreme interest, for they might have represented new types of pentagons! However, it turned out that these solutions cannot be satisfied by any convex pentagon, or  geometric constraints will generate additional information so that such a pentagon must be of a known type. We call solutions such as these \emph{untyped.} Our computerized enumeration generated several untyped solutions. To keep this article to a reasonable length, we will not provide the details for how each of these untyped solutions was reconciled, but we mention that it required several separate nontrivial arguments to show that these untyped solutions are either impossible or can be categorized into the known 14 types. The following cases give a good representation of the types of arguments we gave for them all.

	\begin{enumerate}
	
	\item $A = 2\pi/3$, $B = 2\pi/3$, $C = \pi/2$, $D = 2\pi/3$, $E = \pi/2$, $b = 2a$, $e = d$.
	\item $C = \pi - A/2$, $D = 2\pi - 2B$, $E = B - A/2$, $a = b = d = e$
	\item $C= \pi - A$, $D=B$, $E=A$, $b=c$, and $d = e$ 
	\item $B=\pi - A/2$, $C=A/2+ \pi/2$, $D=\pi-A$, $E= \pi/2$, $b=2a+d$, $e=a+d$
	\item $B=\pi - A/2$, $C=A/2+ \pi/2$, $D=\pi-A$, $E= \pi/2$, $b+d=2a$, $e=a$
	\end{enumerate}
\subsubsection{Untyped Solution 1}
As is, this particular system looks very similar to the equations for a Type 3 pentagon, but it does not quite match. However, upon setting the scale factor of $a = 1$ (so $b = 2$), substituting into Equations \ref{eqn:cos} and \ref{eqn:sin}, and solving for $c$ and $d$, we obtain $c = 1$ and $d = \sqrt{3}$. With this new information, that $c = 1 = a$, we can positively type this set of equations as Type 3.

\subsubsection{Untyped Solution 2}
Using the relations in this system, we can reduce Equation \ref{eqn:sin} (with $A_1 = C$) to \begin{equation*} \sin(A/2) - \sin B + \sin(A+B) - \sin\left(\frac{A+4B}{2}\right) = 0.\end{equation*} Upon applying the sum-to-product identity for sine to the 1st and 4th terms and the 2nd and 3rd terms of this sum and factoring, we arrive at the equation \begin{equation*}2\cos\left(\frac{A+2B}{2}\right)[-\sin B + \sin(A/2)] =0.\end{equation*} Solving this equation for $B$ (with the restriction $0 < A,B < \pi$) gives $B = -A/2 + \pi/2$, $B = A/2$, or $B = -A/2 + \pi$. However, each of these solutions for $B$ is impossible. If $B = -A/2 + \pi/2$, then substitution into Equation \ref{eqn:cos} gives $c = 2 \cos(A/2) + 2\sin(A/2)$, and so in order that $c$ be positive we must have $A > \pi/2$.  But this implies $E = B - A/2 = -A + \pi/2 < 0$. If $B = A/2$, substitution into Equation \ref{eqn:cos} reveals that $c = 0$. Lastly, if $B = -A/2 + \pi$, then substitution into Equation \ref{eqn:cos} again implies $c = 0$. Thus, this system of equations cannot be realized by a convex pentagon.

\subsubsection{Untyped Solution 3}
For the untyped solution 3, Equation \ref{eqn:sin} along with the fact that $A+B+C+D+E = 3\pi$ gives \[(c-d)[\sin(A/2)+\sin(A)]=0.\] Note that for $0<A<\pi$ there are no solutions for \footnotesize\begin{equation*} 0=\sin(A/2)+\sin(A)=\sin(A/2)+2\sin(A/2)\cos(A/2)=\sin(A/2)[1+2\cos(A/2)].\end{equation*} \normalsize Hence $c=d$ so that $b=c=d=e$. Since $A+C = \pi$ and $b=d$, any pentagon satisfying these equations is Type 2.

\subsubsection{Untyped Solution 4}
For untyped solution 4, without loss of generality, assume $a=1$. Equation \ref{eqn:sin} gives $$-1+2 \sin(A) +c \sin(A/2)+d(-1-\cos(A)+\sin(A))= 0.$$ Note that $-1-\cos(A)+\sin(A)=0$ if and only if $A=\pi/2$. In that case, $A+E=\pi$ and the pentagon is a Type I. Otherwise, suppose $-1-\cos(A)+\sin(A) \neq 0$. Solving for $d$ we get \[d=\frac{1-c \sin(A/2)-2 \sin(A)}{-1-\cos(A)+\sin(A)}.\] Substitution into Equation \ref{eqn:cos} yields \[\frac{-1+c(-2 \cos(A/2+\sin(A/2))}{1+\cos(A)-\sin(A)}=0,\] from which we find that  \[c=\frac{1}{-2 \cos(A/2)+\sin(A/2)}.\] Using that $c>0$, we need $-2 \cos(A/2)+\sin(A/2)>0$ or $\tan(A/2)>2$. Since the tangent function is increasing on $(0,\pi/2)$, we get $\arctan(2) < A/2 < \pi/2$ or $2.21 \approx 2 \arctan(2) < A < \pi$. Observe that $-1-\cos(A)+\sin(A)>0$ for $2.21 \approx 2 \arctan(2) < A < \pi$. The requirement that $d>0$ gives $1-c \sin(A/2)-2 \sin(A)>0$ so that $c<\frac{1-2 \sin(A)}{\sin(A/2)}$. This inequality implies $$\frac{1}{-2 \cos(A/2)+\sin(A/2)} < \frac{1-2 \sin(A)}{\sin(A/2)}.$$ Since the denominators are positive we must have $$\sin(A/2) < (1-2 \sin(A))(-2 \cos(A/2)+\sin(A/2)).$$ This inequality is never satisfied for angles satisfying $2.21 \approx 2 \arctan(2) < A < \pi$.

\subsubsection{Untyped Solution 5}
For the 5th untyped solution, without loss of generality, assume $a=1$. Equation \ref{eqn:sin} gives $$-1-2\cos(A)+b(\cos(A)+\sin(A))+c \sin(A/2)= 0.$$ Note that $\cos(A)+\sin(A)=0$ if and only if $A=3\pi/4$. In this case, Equation \ref{eqn:sin} reduces to $-1+\sqrt{2}+c \cos(3\pi/8)=0$, yielding a negative value for $c$. Thus, we may suppose $\cos(A)+\sin(A) \neq 0$. Solving for $b$ in the Equation \ref{eqn:sin} gives $$b=\frac{1+2\cos(A)-c\sin(A/2)}{\cos(A)+\sin(A)}.$$ Substitution into Equation \ref{eqn:cos} gives $$\frac{2-2\sin(A)+c(\cos(A/2)+\sin(A/2))}{\cos(A)+\sin(A)}=0,$$ and solving for $c$ yields $$c=\frac{2(-1+\sin(A))}{\cos(A/2)+\sin(A/2)}.$$ From this we see that $c < 0$, and so this untyped solution is impossible.

\subsection{Summary of results obtained via computer for pentagons admitting 1-, 2-, and 3-block transitive tilings}\label{sec:result-summary}
\begin{table}[H]\centering
\begin{tabular}{|l|l|l|}\hline
Number of nodes &  Pentagon Types Found \\\hline\hline
$n = 5$ &  1, 2, 4, 5\\\hline
$n =6$ & 1, 2, 3 \\\hline\hline
$n = 10$ & 1, 2, 4, 5, 6, 7, 8, 9\\\hline
$n = 11$ & 1, 2, 4, 13\\\hline
$n = 12$ & 1, 2, 4, 11, 12\\\hline\hline
$n = 15$ &  1, 2, 5, 6, 7, 9 \\\hline
$n = 16$ & 1, 2, 3, 4, 5, 6, 15 \\\hline
$n = 17$ & 1, 2, 10\\\hline
$n = 18$ & 1, 2, 3, 10, 14 \\\hline
\end{tabular}\caption{Types of pentagons admitting $i$-block transitive tilings for $i = 1, 2$, and $3$.}\label{tab:1-3-block-results} \end{table}

\section{Future Work: $i\geq 4$}
As $i$ gets larger, the enumeration process outlined earlier grows rapidly in complexity. For relatively small $i$, the method outlined in this article is applicable with the aid of a cluster of computers. We are currently in the process of processing the pentagons that admit $i$-block tilings when $i \geq 4$, and will update this article with further results as we obtain them. The main challenge in extending this search is in efficiently understanding untyped solutions that arise. For a given untyped solution, one way to detect if the solution can be realized by a convex pentagon involves solving the system of equations given by Equations \ref{eqn:cos} and \ref{eqn:sin}. However, for many untyped solutions this system has 3 or more variables. Understanding the solution set for such a system is a challenge. Indeed, as seen in Section \ref{subsec:impossible}, there is no obvious way to automate the process of whether or not a given untyped solution can be realized by a convex pentagon, and if so, whether or not additional conditions will emerge that force such a pentagon to be among the known types.

\section{References}
\nocite{Rei}

\bibliography{refs2}{}
\bibliographystyle{apalike}






\pagebreak

\appendix

\section{Pentagons that admit tile-transtive tilings}\label{app:1-blocks}
Pentagons that admit tile-transitive tilings have already been classified \cite{HK}, but for the sake of illustrating our methods, we will offer our own verification here. 
\subsection{$n = 5$: Pentagons that admit edge-to-edge tile-transitive tilings}

Suppose a pentagon $P$ admits a tile-transtive tiling $\mathscr{T}$ in which each pentagon has exactly 5 vertices ($i = 1, n = 5$). From Table \ref{tab:Dio-solutions}, $\mathscr{T}$ must be of topological type $[3^3.4^2]$ or $[3^2.4.3.4]$, or $[3^4.6]$. These topological types corresponds to isohedral types IH21-IH29. For convenience we list the incidence symbols of isohedral types IH21-IH29 in Table \ref{tab:iso-types-21-29}. The goal is to examine each possible isohedral type for $\mathscr{T}$ to determine conditions on the angles and sides of $P$.

\begin{table}[H]\centering\scriptsize
\begin{tabular}{lllr}
Topological Type & Isohedral Type & Incidence Symbol & Edge Classes\\ \hline
$[3^4.6]$ & IH21 & $[a^+ b^+ c^+ d^+ e^+; e^+ c^+ b^+ d^+ a^+]$& $\alpha \beta \beta \gamma \alpha$\\ \hline
$[3^3.4^2]$ & IH22 & $[a^+ b^+ c^+ d^+ e^+; a^- e^+ d^- c^- b^+]$ & $\alpha \beta \gamma \gamma \beta$\\
 & IH23 & $[a^+ b^+ c^+ d^+ e^+; a^+ e^+ c^+ d^+ b^+]$& $\alpha \beta \gamma \delta \beta$\\
 & IH24 & $[a^+ b^+ c^+ d^+ e^+; a^- e^+ c^+ d^+ b^+]$&$\alpha \beta \gamma \delta \beta$\\
 & IH25 & $[a^+ b^+ c^+ d^+ e^+; a^+ e^+ d^- c^- b^+]$&$\alpha \beta \gamma \gamma \beta$\\
 & IH26 & $[ab^+ c^+ c^- b^-; ab^- c^+]$&$\alpha \beta \gamma \gamma \beta$\\\hline
$[3^2.4.3.4]$ & IH27 & $[a^+ b^+ c^+ d^+ e^+; a^+ d^- e^- b^- c^-]$&$\alpha\beta\gamma\beta\gamma$\\
 & IH28 & $[a^+ b^+ c^+ d^+ e^+; a^+ e^+ d^- c^- b^+]$&$\alpha\beta\beta\gamma\gamma$\\
 & IH29 & $[ab^+ c^+ c^- b^-; ac^+ b^+]$&$\alpha\beta\beta\beta\beta$\\
\end{tabular}\caption{Isohedral types IH21 - IH29 with edge transitivity classes}\label{tab:iso-types-21-29}\end{table}

For example, if $\mathscr{T}$ is of species type $\left<3^3.4^2\right>$, suppose $\mathscr{T}$ is type IH22. The first task is to determine the labelings of $P$ with $a^+b^+c^+d^+e^+$ that are compatible with the incidence symbol for IH22. For example, in Figure \ref{fig:1-5-IH22-good}, a pentagon in a tiling of topological type $[3^3.4^2]$ has been assigned a labeling consistent with isohedral type IH22.

\begin{figure}[H]\centering \includegraphics[scale=.8]{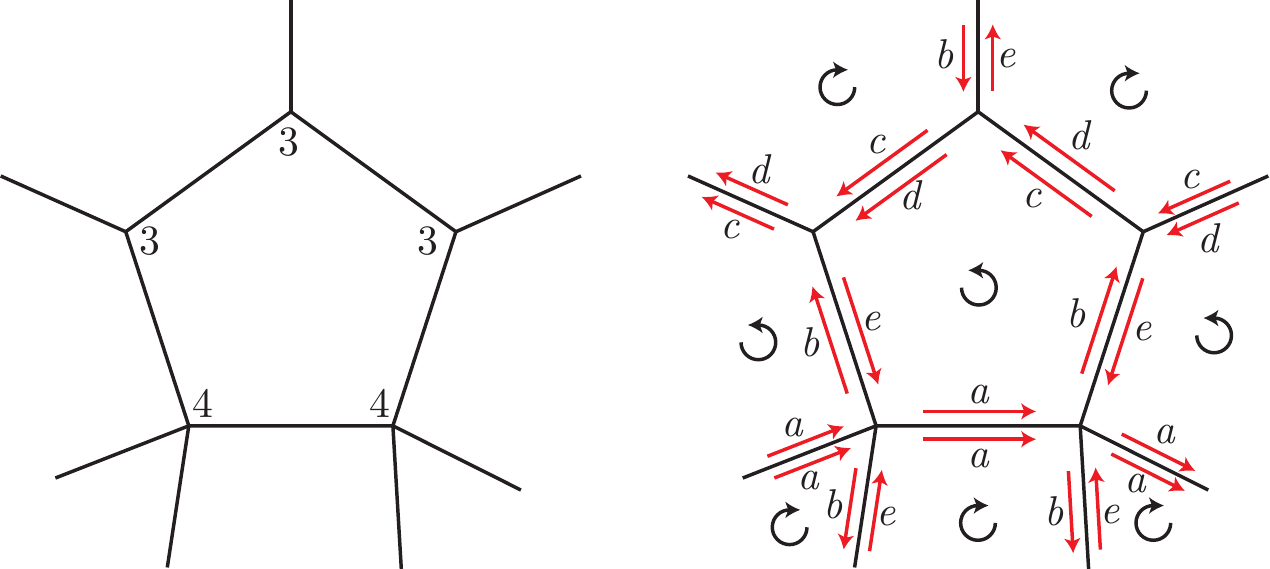} \caption{A compatible labeling of a pentagon of type IH22 (symbol $[a^+b^+c^+d^+e^+; a^-e^+d^-c^-b^+]$).} \label{fig:1-5-IH22-good}\end{figure} 
 
It is easily checked that the only labeling compatible with this symbol places the ``$a$'' between the two 4-valent vertices. 
Next, labels $A, B, C, D,$ and $E$ are assigned to the corner angles of $P$ and labels $a, b, c, d,$ and $e$ are assigned to the sides as in Figure \ref{fig:1-5-IH22-labeled}. \begin{figure}[H]\centering \includegraphics[scale=.8]{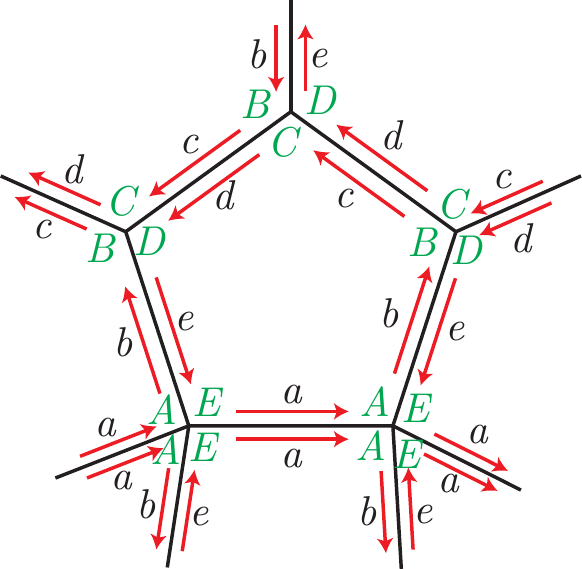} \caption{A pentagon of type IH22 with labeled angles. The side length labels correspond to the incidence labels in this case.} \label{fig:1-5-IH22-labeled}\end{figure} 
With this labeling, the required relationships among the angles and the sides may be read off, yielding \begin{align*}B + C + D & = 2\pi\\
A + E & = \pi\\
b & = e\\
c & = d\\\end{align*} In particular, because two consecutive angles of $P$ must be supplementary, we see that if $P$ admits an isohedral tiling of type IH22, then $P$ must be a Type 1 pentagon.

In a similar manner, it can be determined that the only compatible labeling for IH23-IH26 places the $a$ between the two 4-valent vertices as well. This in turn forces, $A+E = \pi$ for any pentagon admitting isohedral tilings of types IH23-IH26, and so any such pentagon is of Type 1. 

If $P$ admits tilings of isohedral types IH27, IH28, or IH29 the only compatible labeling requires that $a$ be placed between the two 3-valent vertices. This forces a unique labeling for pentagons of these isohedral types, as in Figure \ref{fig:IH27-29}. From these unique labelings, the equations corresponding to pentagons of types IH27, IH28, and IH29 are determined (Table \ref{tab:IH27-29-eqns}), from which we see it is seen that any pentagon admitting types IH27, IH28, or IH29 are pentagons of Types 2, 4, or 4 (respectively).

\begin{figure}[H]
\centering
\begin{subfigure}[H]{.3\textwidth} 
\centering
\includegraphics[scale=.6]{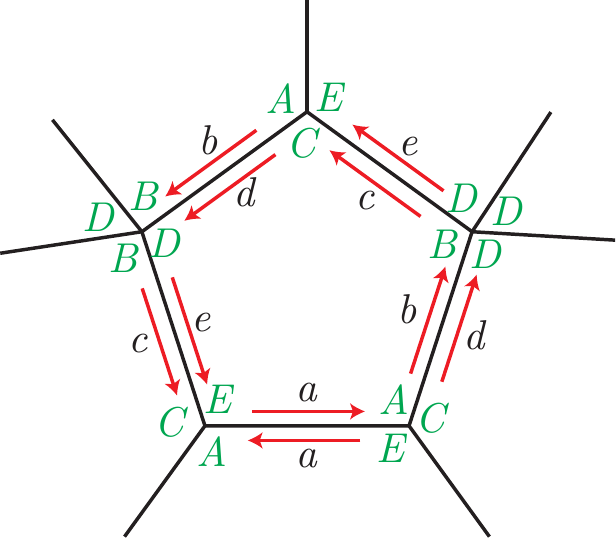}
\caption*{IH27}
\end{subfigure}
\begin{subfigure}[H]{.3\textwidth}
\centering
\includegraphics[scale=.6]{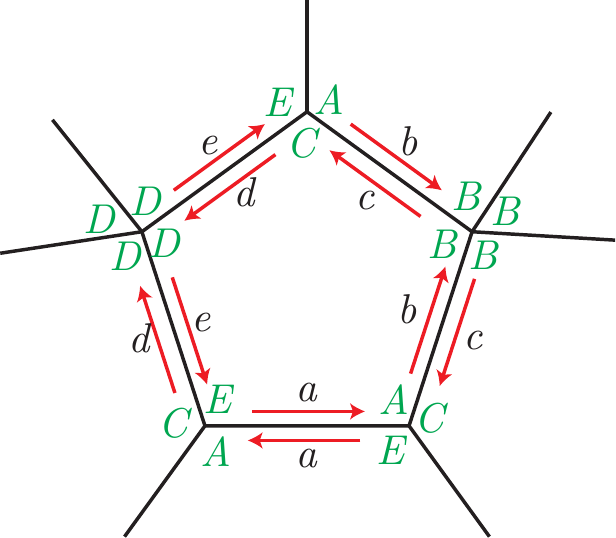}
\caption*{IH28}
\end{subfigure}
\begin{subfigure}[H]{.3\textwidth}
\centering
\includegraphics[scale=.6]{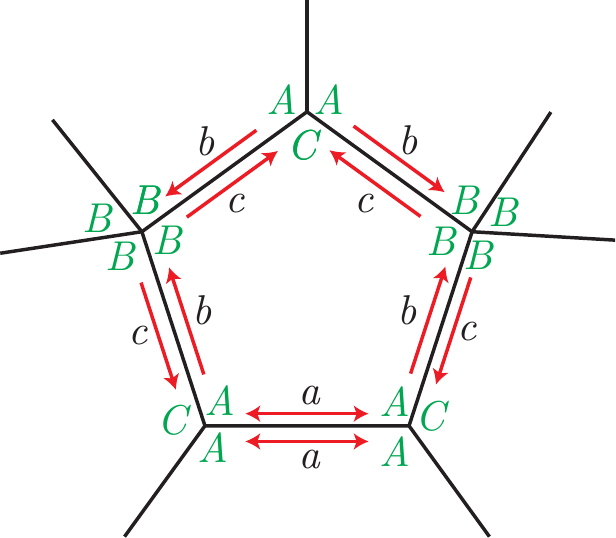}
\caption*{IH29}
\end{subfigure}
\caption{IH27, IH28, and IH29 pentagons with $n = 5$.}\label{fig:IH27-29}
\end{figure}  

\begin{table}[H]\centering
\begin{tabular}{lll}
IH27 & IH28 & IH29\\\hline
$B+D = \pi$ & $B = D = \pi/2$ & $B = \pi/2$\\
$b = d$ &$b = c$ & $2A + C = \pi$\\
$c = e$ & $d = e$ & $b = c$\\\end{tabular}\caption{Angle/side equations for pentagons with $n = 5$ of types IH27, IH28, and IH29}\label{tab:IH27-29-eqns}\end{table}

IH21 is the only isohedral type for topological type $[3^4.6]$. There are only two viable labelings of an IH21 pentagon corresponding to the incidence symbol for IH21 in Table \ref{tab:iso-types-21-29}. These labelings are seen in Figure \ref{fig:1-5-IH21}, and the required equations relating angles and sides are given in Table \ref{tab:1-5-IH21-eqns}. Both IH21 pentagons with $n = 5$ must be Type 5 if they are to tile the plane.

 \begin{figure}[H]
\centering
\begin{subfigure}[H]{.4\textwidth} 
\centering
\includegraphics[scale=.75]{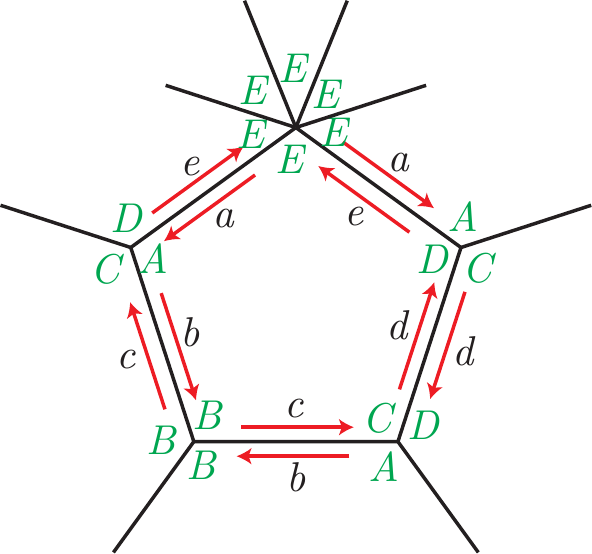}
\caption*{IH21(1)}
\end{subfigure}
\begin{subfigure}[H]{.4\textwidth}
\centering
\includegraphics[scale=.75]{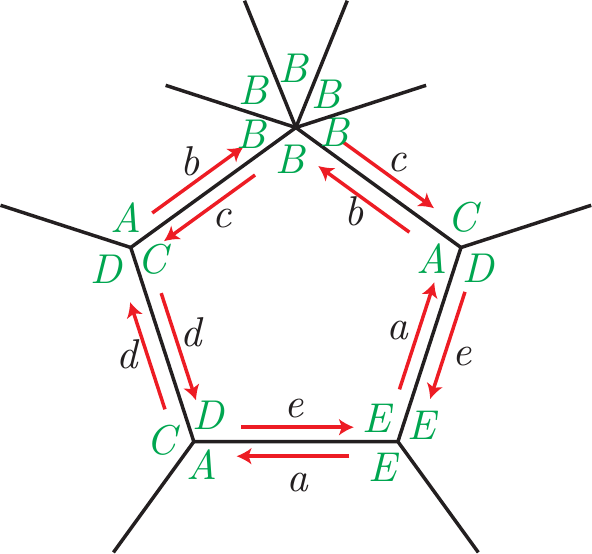}
\caption*{IH21(2)}
\end{subfigure}
\caption{IH21 pentagons with $n = 5$.}\label{fig:1-5-IH21}
\end{figure}   

\begin{table}[H]\centering
\begin{tabular}{ll}
IH21(1) & IH21(2) \\\hline
$E = \pi/3$ & $B = \pi/3$ \\
$B = 2\pi/3$ &$E = 2\pi/3$\\
$a = e, b = c$ & $a = e, b = c$\\\end{tabular}\caption{Angle/side equations for pentagons with $n = 5$ of types IH21}\label{tab:1-5-IH21-eqns}\end{table}

\noindent Other labelings of IH21 pentagons with $n = 5$ yield impossible relationships among the angles of the pentagon. For example, in Figure \ref{fig:1-5-IH21-3-bad}, the labeling requires $A+C+D = 2(A+C+D)$.

\begin{figure}[H]\centering \includegraphics[scale=.75]{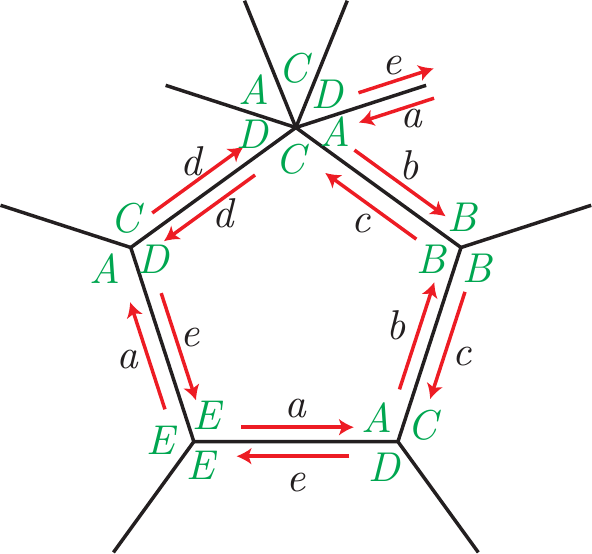}\caption{An impossible labeling of an IH21 pentagon with $n = 5$.} \label{fig:1-5-IH21-3-bad}\end{figure} 

The results for isohedral pentagons with $n = 5$ are summarized in Table \ref{table:nonlin}.

\subsection{$n = 6$: Pentagons that admit non-edge-to-edge tile-transitive tilings}

For $i=1$ and $n=6$, the only possible topological type is $[3^6]$. In this case, each pentagon of $\mathscr{T}$ has exactly one flat note appearing between two of the corners of the pentagon. Many isohedral types under topological type $[3^6]$ are impossible for such a pentagon. If a pentagon $P$ with $n = 6$ is labeled according to a $[3^6]$ isohedral type, consider an edge label $x$ from the incidence symbol that is adjacent to this flat node. In isohedral types IH8-IH11, IH18, and IH20, we see that each label must appear at least twice in $P$ and in nonadjacent locations. For these types, another side of $P$ that is not adjacent to the flat node must be labeled with $x$. This forces one of the corners of $P$ to have angle measure $\pi$, which cannot be (see Figure \ref{fig:1-6-nontilers-1}. In a similar manner, a label $x$ in the label that is adjacent to a flat node cannot be unsigned (see Figure \ref{fig:1-6-nontilers-2}. This observation in combination with the previous observation eliminates IH12 and IH13. For isohedral types IH17 and IH19, if in labeling $P$ we attempt to avoid labeling inconsistencies, we find that the symbols adjacent to the flat node must be of the form $x^{+}x^{-}$ or $x^{-}x^{+}$. However, in these two isohedral types, the edges adjacent to a corner of $P$ would necessarily be labeled $x^{+}x^{-}$ or $x^{-}x^{+}$, forcing that corner to be flat.

\begin{figure}[H]
\centering
\begin{subfigure}[H]{.4\textwidth} 
\centering
\includegraphics[scale=.75]{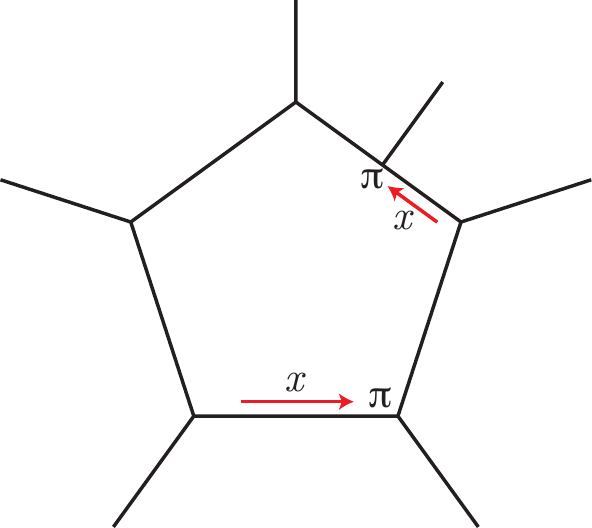}
\caption{}\label{fig:1-6-nontilers-1}
\end{subfigure}
\begin{subfigure}[H]{.4\textwidth}
\centering
\includegraphics[scale=.75]{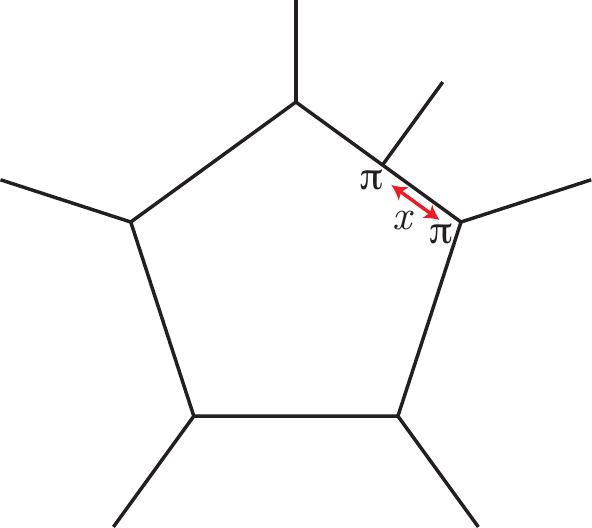}
\caption{}\label{fig:1-6-nontilers-2}
\end{subfigure}
\caption{Symbols that force flat corners in $P$.}\label{fig:1-6-nontilers}
\end{figure}   

After eliminating those isohedral types that are force $P$ to have a flat corner, types IH1-IH7 and IH14-IH16 remain to be checked. Any 6-node pentagon of isohedral type IH1-IH7 can be labeled in 6 ways (each labeling corresponding to the choice of symbols surrounding the flat node). Analyzing each possible labeling is a matter of routine, and from among these 42 labelings, 5 types of pentagons are found.
\begin{itemize}
\item Type 1 pentagons
\item Type 2 pentagons
\item Type 3 pentagons
\item Obviously impossible pentagons
\item Non-obviously impossible pentagons
\end{itemize}

\noindent Examples of labelings leading to these 5 outcomes will be presented next.

In Figure \ref{fig:1-6-IH2-type-I}, we see a labeling of a pentagon $P$ which forces two adjacent angles of $P$ to be supplementary, and so such a pentagon is of Type 1. Indeed any IH2 labeling of a 6-node pentagon yields a Type 1 pentagon. In Figure \ref{fig:1-6-IH3-type-II}, a 6-node pentagon has been given an IH3 labeling, and it is quickly determined such a pentagon is of Type 2. In Figure \ref{fig:1-6-IH7-type-III}, a 6-node pentagon is labeled as an IH7 tile. This labeling gives a Type 3 pentagon.

\begin{figure}[H]
\centering
\begin{subfigure}[H]{.3\textwidth} 
\centering
\includegraphics[scale=.6]{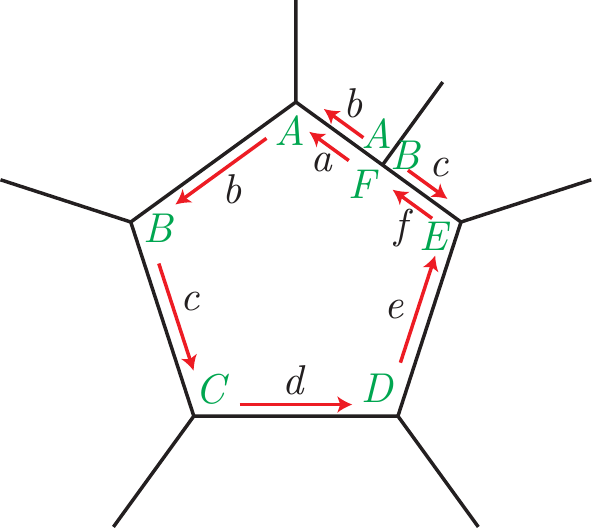}
\caption{}\label{fig:1-6-IH2-type-I}
\end{subfigure}
\begin{subfigure}[H]{.3\textwidth}
\centering
\includegraphics[scale=.6]{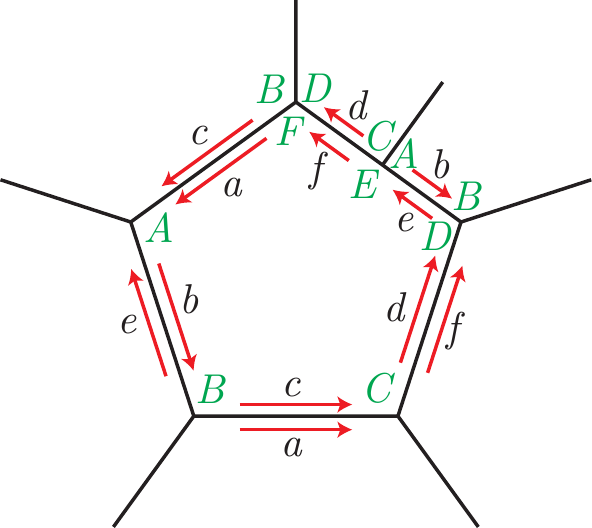}
\caption{}\label{fig:1-6-IH3-type-II}
\end{subfigure}
\begin{subfigure}[H]{.3\textwidth}
\centering
\includegraphics[scale=.6]{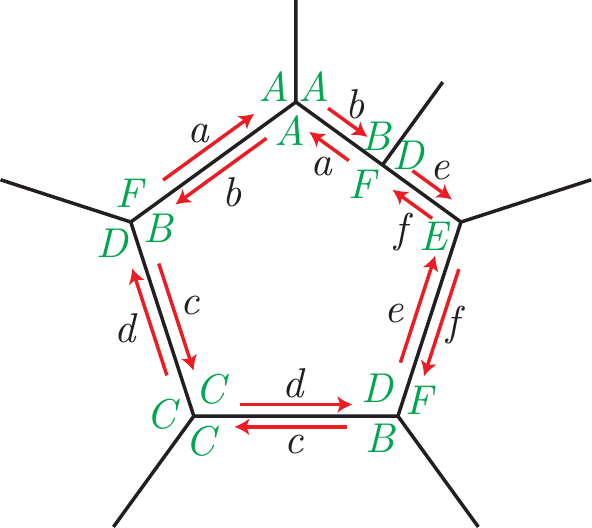}
\caption{}\label{fig:1-6-IH7-type-III}
\end{subfigure}
\caption{$[3^6]$ 6-node pentagons of Types 1, 2 ,and 3}\label{fig:1-6}
\end{figure}  

Most of the IH1-IH7 labelings of 6-node pentagons are easily categorized into the known 14 types, but two kinds of labelings arise that cannot be realized by an actual convex pentagon. We will refer to such labelings as \emph{impossible}. The first impossible labeling, which appears in only three of the IH7 labelings, is impossible since three flat angles cannot surround a vertex (see Figure \ref{fig:1-6-IH7-bad}). The second type of impossible labeling is not obviously impossible. This labeling appears in equivalent forms in all six IH1 labelings and in two of the IH3 labelings. Consider the labeling of the 6-node pentagon of type IH1 in Figure \ref{fig:1-6-IH1-bad}. This labeling implies a geometrically impossible pentagon: a routine calculation reveals that the distance from the interior vertex labeled $B$ to the interior vertex labeled $D$ must be greater than $c + d$. Indeed, if the edge $\overline{EA}$ is placed on a horizontal with $E$ at the origin, then $B = (c + d + b \cos(\pi - A),b \sin(\pi - A))$ and $D = (b \cos E, b \sin E)$. Then \begin{align*}
|BD|^2 = & [c + d + b\cos(\pi - A) -  b\cos E)]^2 + [b \sin(\pi - A) - b \sin E]^2\\
= & (c+d)^2 + 2b(c+d)[\cos(\pi - A) -  \cos E] + b^2[\cos(\pi - A) - \cos E]^2\\
 & + b^2[\sin(\pi - A) - \sin E]^2 \\
\geq & (c+d)^2 + 2b(c+d)[\cos(\pi - A) -  \cos E] \\
> & (c+d)^2\end{align*} Since $A+C+E = 2\pi$ and all interior angles of a convex pentagon are less than $\pi$, then $A+E > \pi$, so $\pi > A > \pi - E  > 0$ and $\cos$ is decreasing on the interval $[0,\pi]$, which justifies the final inequality.

\begin{figure}[H]
\centering
\begin{subfigure}[H]{.4\textwidth} 
\centering
\includegraphics[scale=.75]{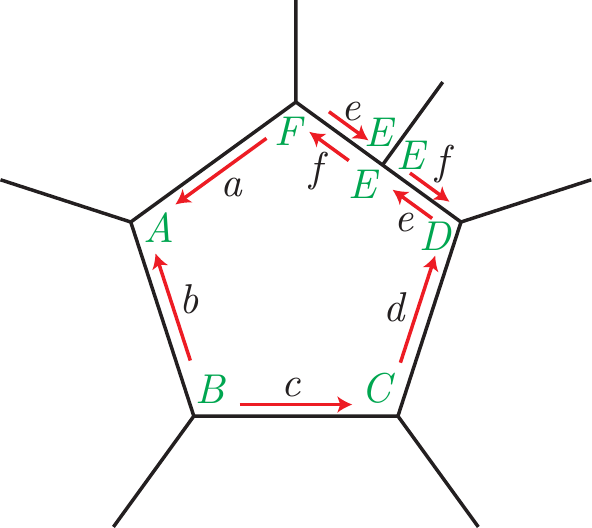}
\caption{$E = \pi$}\label{fig:1-6-IH7-bad}
\end{subfigure}
\begin{subfigure}[H]{.4\textwidth}
\centering
\includegraphics[scale=.75]{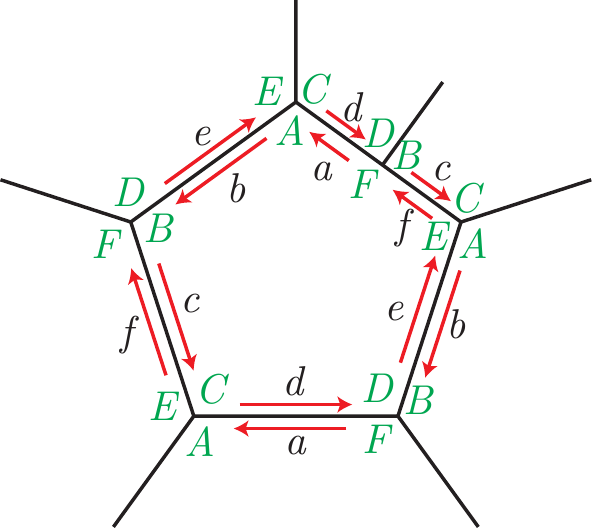}
\caption{$F = \pi$}\label{fig:1-6-IH1-bad}
\end{subfigure}
\caption{Two inconsistent labelings of $[3^6]$ 6-node pentagons from types IH1-IH7}\label{fig:1-6-nontilers}
\end{figure}

Next consider the IH14-IH16 labelings of 6-node pentagons. These three isohedral types are similar in that the incidence symbols require, for the same reasons previously discussed pertaining to labeling of edges adjacent to the flat node, that the edges adjacent the the flat node must be marked $a^{-}a^{+}$ or $c^{+}c^{-}$, so there are only two viable labelings for each of these three isohedral types. The two viable labelings for IH14 produce pentagons like the one of Figure \ref{fig:1-6-IH1-bad}, so there are no possible tilings by 6-node pentagons of isohedral type IH14. The two viable IH15 labelings are shown in Figure \ref{fig:1-6-IH15}, and the resulting pentagons are of Type 1. Isohedral type IH16 yields the two labelings of Figure \ref{fig:1-6-IH16}. Figure \ref{fig:1-6-IH16-type-III} gives a Type 3 pentagon, and Figure \ref{fig:1-6-IH16-bad} is impossible.

\begin{figure}[H]
\centering
\begin{subfigure}[H]{.4\textwidth} 
\centering
\includegraphics[scale=.75]{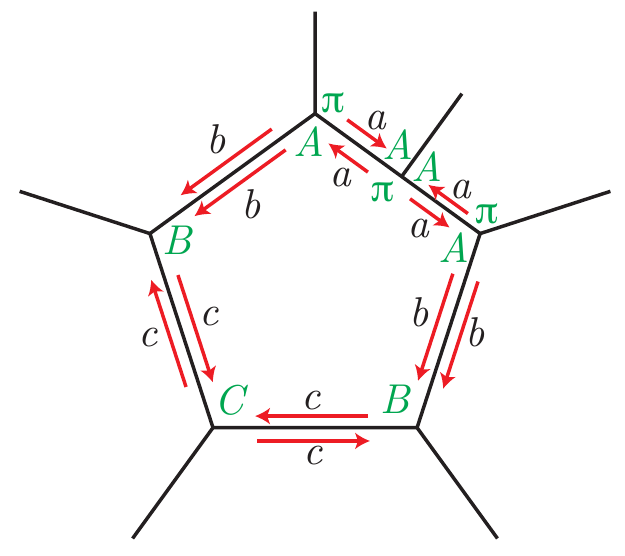}
\caption{}
\end{subfigure}
\begin{subfigure}[H]{.4\textwidth}
\centering
\includegraphics[scale=.75]{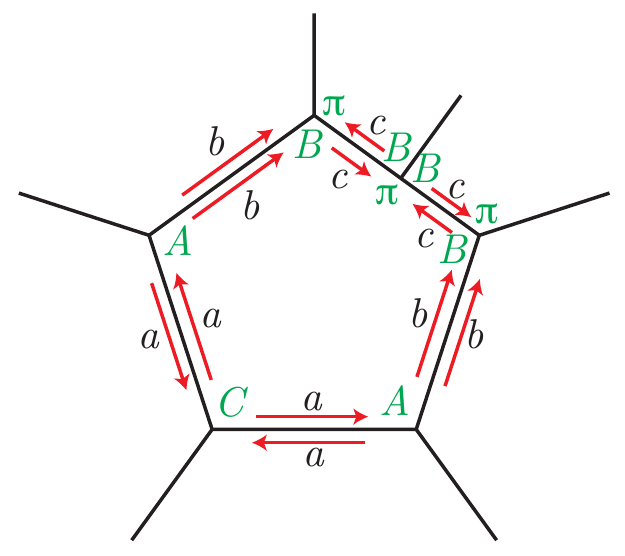}
\caption{}
\end{subfigure}
\caption{6-node IH15 pentagons}\label{fig:1-6-IH15}
\end{figure}   

\begin{figure}[H]
\centering
\begin{subfigure}[H]{.4\textwidth} 
\centering
\includegraphics[scale=.75]{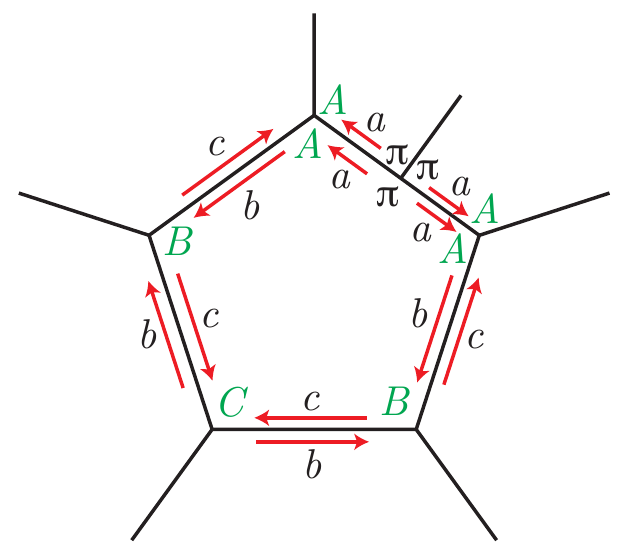}
\caption{}\label{fig:1-6-IH16-bad}
\end{subfigure}
\begin{subfigure}[H]{.4\textwidth}
\centering
\includegraphics[scale=.75]{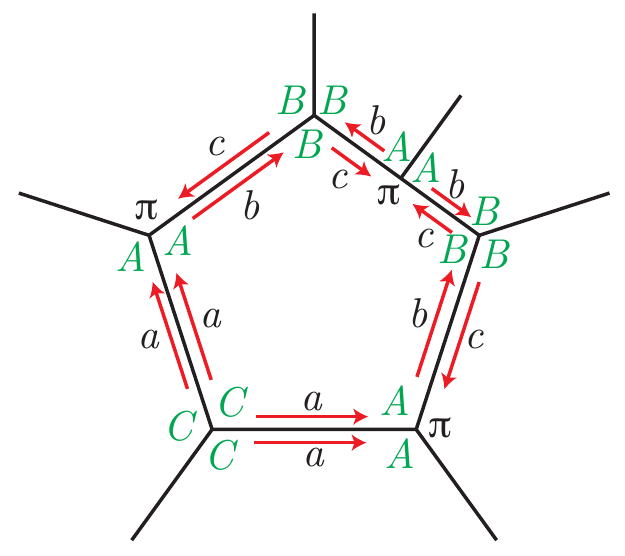}
\caption{}\label{fig:1-6-IH16-type-III}
\end{subfigure}
\caption{6-node IH16 pentagons}\label{fig:1-6-IH16}
\end{figure} 

\begin{table}[H]
\centering\scriptsize
\begin{tabular}{|c c c c c c|} 
\hline
$i$ & $n$ & Vertex Valences & Topolgical Type & Isohedral Type & Possible Pentagon Type(s)  \\
\hline
1 & 5 & $\left<3.3.3.3.6\right>$ & $[3^4.6]$ & IH21 & 5\\ \hline
 &  & $\left<3.3.3.4.4\right>$ & $[3^3.4^2]$ & IH22 & 1\\ 
 & &  & $[3^3.4^2]$ & IH23 & 1\\ 
&  &  & $[3^3.4^2]$ & IH24 & 1\\ 
&  &  & $[3^3.4^2]$ & IH25 &  1 \\ 
&  &  & $[3^3.4^2]$ & IH26 & 1 \\ 
&  &  & $[3^2.4.3.4]$ & IH27 & 2\\
&  &  & $[3^2.4.3.4]$ & IH28 & 4\\
&  &  & $[3^2.4.3.4]$ & IH29 & 4\\ \hline
1& 6 & $\left<3.3.3.3.3.3\right>$ & $[3^6]$ & IH1 & -\\ 
 &  &  & $[3^6]$ & IH2 & 1\\
 &  &  & $[3^6]$ & IH3 & 2\\
 &  &  & $[3^6]$ & IH4 & 1\\
 &  &  & $[3^6]$ & IH5 & 1\\
 &  &  & $[3^6]$ & IH6 & 1,2\\
 &  &  & $[3^6]$ & IH7 & 3\\
 &  &  & $[3^6]$ & IH8-14 & - \\
 &  &  & $[3^6]$ & IH15 & 1\\
 &  &  & $[3^6]$ & IH16 & 3\\
 &  &  & $[3^6]$ & IH17-20 & -
\\ [1ex] 
\hline 
\end{tabular}
\caption{Types of Isohedral Pentagons}\label{table:nonlin} 
\end{table}

In summary, we see from Table \ref{table:nonlin} that any pentagon that admits a tile transitive tiling of the plane must be of the known types 1 - 5, confirming the result in \cite{HK}.

\end{document}